\newcommand{\jb}[1]{{\textcolor{NavyBlue}{#1}}} 
\newcommand{\JB}{\color{NavyBlue}}
\numberwithin{equation}{section}
\theoremstyle{plain} 
\newtheorem{thm}{Theorem}[section]
\newtheorem{theorem}[equation]{Theorem}
\newtheorem{cor}[equation]{Corollary} 
\newtheorem{corollary}[equation]{Corollary}
\newtheorem{lem}[equation]{Lemma}
\newtheorem{lemma}[equation]{Lemma}
\newtheorem{fact}[equation]{Fact}
\newtheorem{prop}[equation]{Proposition}
\newtheorem{proposition}[equation]{Proposition}
\newtheorem{claim}[equation]{Claim}
\newtheorem{que}[equation]{Question}
\newtheorem{pbm}[equation]{Problem}
\newtheorem{summary}[equation]{Summary}
\theoremstyle{remark} 
\newtheorem{remark}[equation]{Remark}
\newtheorem{remarks}[equation]{Remarks}
\theoremstyle{definition} 
\newtheorem{defn}[equation]{Definition}
\newtheorem{definition}[equation]{Definition}
\newcommand{\annotation}[1]{\marginpar{\tiny #1}}
\newcommand{\question}[1]{\medskip\noindent{\bf Question.} #1\medskip}
\newcommand\chk[1]{{\bf\Large (CHECK: #1)}}
\newcommand{\comment}[1]{}
\newcommand{\new}[1]{{\bf #1 }}
\newcommand{\caution}[1]{\medbreak\noindent{\sc CAUTION:} #1\medbreak}
\newcommand\alter[1]{\left\{\begin{array}{l}#1\end{array}\right.}
\newcommand\sd{\bigtriangleup} 
\newcommand\dotcup{{\mathaccent\cdot\cup}}
\newcommand\mme{m.m.e.}
\newcommand\pe{\mathbb P_{\textnormal{erg}}}
\newcommand\Proberg{\mathbb P_{\textnormal{erg}}}
\newcommand\pen{\mathbb P_{\textnormal{erg}}'} 
\newcommand\eps{\epsilon}
\newcommand\id{\operatorname{id}}
\newcommand\NN{{\mathbb N}}
\newcommand\RR{{\mathbb R}}
\newcommand\TT{{\mathbb T}}
\newcommand\ZZ{{\mathbb Z}}
\newcommand\pih{{\hat\pi}}
\newcommand\PP{{\mathbb P}}
\newcommand\Prob{{\operatorname{Prob}}}
\newcommand{\sarig}{Sarig regular}
\newcommand\Sigmah{{\hat\Sigma}}
\newcommand\sprat{\sigma_{\operatorname{rat}}}
\newcommand\diffsym{\operatorname{\Delta}}
\newcommand\xxk{\widetilde{X}^k}
\newcommand\xxq{X^{(q)}}
\newcommand\ov{\overline}
\newcommand\hXN{\hat{X}_N}
\newcommand\ret{{{\textnormal{ret}}}}
\newcommand\topo{{\operatorname{top}}}
\newcommand\nullset{\emptyset}
\newcommand\uep{union-entropy-period}
\newcommand\UEP{u.e.p.}
\newcommand\Thm{Thm.~}
\newcommand\Lem{Lem.~}
\newcommand\Prop{Prop.~}
\newcommand\Fact{Fact~}
\newcommand\Cor{Cor.~}
\newcommand\suppress[1]{}
\begin{document}

\begin{title}[Almost Borel Structure]
{The almost Borel structure of surface diffeomorphisms, Markov shifts and their factors}
\end{title}

\author{Mike Boyle}
\address{Department of Mathematics - University of Maryland}
\email{mmb@math.umd.edu}
\author{J\'er\^ome Buzzi}
\address{Laboratoire de Math\'ematiques d'Orsay - Universit\'e Paris-Sud}
\email{jerome.buzzi@math.u-psud.fr}


\dedicatory{Dedicated to Roy Adler, in appreciation} 

\begin{abstract}
Extending work of Hochman, we study the almost-Borel structure, i.e., 
the nonatomic invariant probability measures, of symbolic systems and surface diffeomorphisms. 

We first classify Markov shifts and characterize them as strictly universal with respect to a natural family of classes of Borel systems. We then study their continuous factors showing that a low entropy part is almost-Borel isomorphic to a Markov shift but that the remaining part is much more diverse, even for finite-to-one factors. However, we exhibit a new condition which we call 
\lq Bowen type\rq\ which gives complete control of those factors.

This last result applies to and was motivated by the symbolic covers of Sarig. We find complete numeric invariants for Borel isomorphism of $C^{1+}$ surface diffeomorphisms modulo zero entropy measures; for those admitting a totally ergodic measure of positive (not necessarily maximal) entropy, we get a classification up to almost-Borel isomorphism.
\end{abstract} 
\maketitle

\setcounter{tocdepth}{1}
\tableofcontents 

\section{Introduction} 

Much of the richness of dynamical systems theory comes from
understanding systems with respect to different structures (smooth,
measurable, etc.). In this paper we are interested in the almost-Borel 
structure of surface diffeomorphisms. More precisely we study them as automorphisms of standard Borel spaces up to sets negligible for all invariant, nonatomic
Borel probability measures, following Hochman
\cite{Hochman} (see also \cite{VaradErgDec}). 

We analyze  Markov shifts (generalizing \cite{Hochman} to the 
non-irreducible, non-mixing case) and especially their factors, both under continuous and what we call Bowen type factor maps. 
We finally show that this applies to Sarig's symbolic dynamics \cite{Sarig} of surface diffeomorphisms.

\subsection{Surface diffeomorphisms}
We consider
surface diffeomorphisms which are  $C^{1+}$ smooth, i.e.,  
with H\"older continuous derivative. 
(We refer to  Sec.\ \ref{sec.back} for definitions and background.)
Our main result, \Thm\ref{surfacetheorem}, implies:

\begin{thm}\label{mainthm.conjug}
Any  $C^{1+}$-diffeomorphism  of a compact surface is Borel isomorphic to a countable state Markov shift, up to a subset negligible with respect to all ergodic measures\footnote{By measure we will  
(outside Appendix \ref{sec:BPD}) always mean invariant Borel probability measure.} with positive entropy.
\end{thm}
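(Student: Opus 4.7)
The plan is to combine Sarig's symbolic dynamics for $C^{1+}$ surface diffeomorphisms with the Bowen-type factor machinery developed later in the paper and announced in the abstract.

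First, I would invoke Sarig's construction: for a $C^{1+}$ surface diffeomorphism $f\colon M\to M$, there is a countable state Markov shift $(\Sigma,\sigma)$ and a Hölder continuous factor map $\pi\colon \Sigma\to M$ such that every ergodic $f$-invariant Borel probability measure on $M$ with positive entropy lifts through $\pi$ to an ergodic $\sigma$-invariant measure of the same entropy, and such that $\pi$ is finite-to-one on the subset of $\Sigma$ carrying all such lifts. From the almost-Borel viewpoint of positive-entropy ergodic measures, this presents $(M,f)$ as a finite-to-one continuous factor of a countable state Markov shift, reducing the problem to an abstract statement about factors of Markov shifts.

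Next, I would verify that $\pi$ falls in the ``Bowen type'' class singled out in the introduction. This is precisely the motivating example, so the verification should amount to a structural check on Sarig's cover: one needs uniformly bounded fibres over positive-entropy ergodic measures together with the combinatorial compatibility on preimages that ``Bowen type'' abstracts from such symbolic-cover situations. With this in hand, the paper's theorem on Bowen-type continuous factors of countable state Markov shifts produces an almost-Borel isomorphism, on the positive-entropy part, between $(M,f)$ and some countable state Markov shift $(\Sigma',\sigma')$. Passing from this almost-Borel isomorphism to an honest Borel isomorphism between complements of sets negligible for every ergodic positive-entropy measure is then a bookkeeping step, sweeping into the negligible set both the zero-entropy part and the countably many exceptional orbits where any two Borel models must differ.

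The main obstacle in this plan is the Bowen-type verification for Sarig's map. The condition is tailored to fit this example, but a rigorous check requires a careful reading of Sarig's construction: the ``affiliations'' of the symbols in the Markov partition, the Pesin-chart bookkeeping on both sides of the cover, and the explicit mechanism identifying distinct preimages that make $\pi$ finite-to-one on the right set. Once that is done, the Bowen-type factor theorem itself --- while technically the core of the paper --- may be invoked as a black box, and the final reduction to a genuine Borel isomorphism modulo a set negligible for all ergodic positive-entropy measures is routine.
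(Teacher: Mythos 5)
Your high-level plan (Sarig cover $\to$ Bowen-type verification $\to$ Bowen-type factor theorem $\to$ Borel isomorphism) is the same as the paper's, but there is a genuine gap in the first step that is not bookkeeping.

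You state that Sarig's construction provides a \emph{single} countable state Markov shift $\Sigma$ and factor map $\pi\colon\Sigma\to M$ such that \emph{every} ergodic measure of positive entropy lifts. This is not what Sarig proves. His symbolic dynamics depends on a parameter $\chi>0$: for fixed $\chi$, the Markov shift $\hat\Sigma_\chi$ captures only ergodic measures with entropy $\geq\chi$ (equivalently, with both Lyapunov exponents outside $(-\chi,\chi)$). There is no single $\chi$ that works for all positive-entropy measures, since their entropies accumulate at $0$. Consequently, applying the Bowen-type factor theorem to a single $\hat\Sigma_\chi$ only shows that the part of $(M,f)$ carrying measures of entropy $\geq\chi$ is almost-Borel isomorphic to a Markov shift.

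To repair this you must take a countable family $\chi_n=1/n$, obtain for each one an image $\hat Y_n\subset M$ that is almost-Borel isomorphic to a Markov shift, and then argue that the (non-disjoint) union $Y_0=\bigcup_n\hat Y_n$ is \emph{still} almost-Borel isomorphic to a Markov shift. This last step is non-trivial: a countable union of Markov shifts need not obviously be one, and the paper needs a separate result (a corollary of the characterization of systems almost-Borel isomorphic to Markov shifts, in terms of universal parts and periodic-Bernoulli top measures) to conclude. Without this union-closure step your plan stalls at the bounded-away-from-zero entropy level and does not reach all positive-entropy measures. The rest of your proposal — the Bowen-type check on Sarig's cover via affiliations and Pesin charts, and the observation that the zero-entropy part can be swept into the negligible set — is in line with the paper's argument.
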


We will deduce a classification involving the periods of 
ergodic mea\-su\-re-pre\-ser\-ving 
systems $(S,\mu )$ defined as follows. Recall that the rational spectrum is:
 \begin{equation}\label{eq.sprat}
   \sprat(S,\mu):=\{e^{2i\pi r}:r\in\mathbb Q,\;\exists f\in L^2(\mu),\;
 f\circ S=e^{2i\pi r} f 
\text{ and } f\neq 0\}.
 \end{equation}
A positive integer $p$ is a \emph{period} if $e^{2i\pi/p}\in\sprat(S,\mu)$.
In Sec.\ \ref{sec.proofSurfClass}, we will prove the following,  using a classification of Markov shifts (\Thm\ref{mainthm.classif} below):

\begin{thm}\label{mainthm.surfclass}
Two $C^{1+}$-diffeomorphisms  of compact surfaces are Borel isomorphic, up to a subset negligible with respect to all ergodic measures with positive entropy, if and only if the following data are equal for both: for each $p\geq1$,
 \begin{enumerate}
  \item the supremum of the positive entropies of ergodic measures which have a maximum period that is equal to $p$;
 \item if this supremum is positive, the cardinality of the set of
   non\-atomic ergodic measures that achieves 
the previous supremum.
 \end{enumerate}
\end{thm}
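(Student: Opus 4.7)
The plan is to reduce the statement to the Markov shift classification promised as Theorem \ref{mainthm.classif}. By Theorem \ref{mainthm.conjug}, each $C^{1+}$ compact surface diffeomorphism $f$ is Borel isomorphic, modulo a set negligible for all positive-entropy ergodic measures, to a countable-state Markov shift $\Sigma_f$. Two diffeomorphisms $f,g$ are therefore Borel isomorphic in the sense of the theorem if and only if $\Sigma_f$ and $\Sigma_g$ are almost-Borel isomorphic on their positive-entropy ergodic parts. The proof accordingly splits into two halves: necessity (the data in (1)--(2) are preserved by any such isomorphism) and sufficiency (these data match, via Theorem \ref{mainthm.classif}, the complete invariant attached to the associated Markov shift).

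For necessity, the key observation is that both the metric entropy $h(S,\mu)$ and the rational spectrum $\sprat(S,\mu)$ are measurable-isomorphism invariants of an ergodic system $(S,\mu)$, so in particular they are preserved on each individual ergodic measure by any Borel isomorphism defined off a set negligible for all positive-entropy ergodic measures. Hence the function $\mu\mapsto\max\{p\geq1:e^{2i\pi/p}\in\sprat(S,\mu)\}$ is invariant on positive-entropy ergodic measures, and the quantities in (1) and (2) depend only on the multiset $\{(h(\mu),\text{max period of }\mu):\mu\text{ pos.\ entropy ergodic}\}$, so they are invariants.

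For sufficiency I would decompose each Markov shift into its irreducible components. Each positive-recurrent component of period $p$ and Gurevich entropy $h$ supports a unique m.m.e., ergodic with entropy $h$ and maximum period exactly $p$; these are precisely the measures contributing to the supremum in (1) and counted in (2). All other positive-entropy ergodic measures — those arising strictly below a top m.m.e., or inside null-recurrent or transient components — lie in a universal almost-Borel model controlled entirely by the top entropies per period, in the spirit of Hochman's universality. The role of Theorem \ref{mainthm.classif} is to package this structure into a complete invariant, and one then verifies that the data (1)--(2) determine, and are determined by, that complete invariant. Pulling the resulting isomorphism back through Theorem \ref{mainthm.conjug} yields the required isomorphism between $f$ and $g$.

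The main obstacle is this last step: showing that below each period-$p$ top entropy $h_p$, the space of ergodic measures of maximum period $p$ and entropy strictly less than $h_p$ is determined up to almost-Borel isomorphism by $h_p$ alone, independently of the detailed irreducible-component structure that produced it. This universality statement is the deep content inherited from Hochman and generalized by the paper's Markov shift classification; once it is in hand, the compact numerical invariants (1)--(2) genuinely do suffice, and the equivalence in Theorem \ref{mainthm.surfclass} follows.
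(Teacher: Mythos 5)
Your high-level strategy matches the paper's: reduce to Markov shifts via Theorem~\ref{mainthm.conjug}, then invoke the Markov-shift classification of Theorem~\ref{mainthm.classif}, and verify that the numerical data (1)--(2) correspond to the complete invariant. Your necessity argument is fine, if more elaborate than needed (the paper just observes the data depend only on positive-entropy measures, which are preserved).

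The gap is in the sufficiency direction, and you have actually misidentified where the work lies. You close by saying the ``main obstacle'' is showing that the low-entropy part below each period-$p$ top entropy is controlled by $h_p$ alone --- but that is precisely the content of Theorem~\ref{mainthm.classif}, which you are entitled to use as a black box here. The task Theorem~\ref{mainthm.classif} leaves you is different and more concrete: show that your data (1) and (2), which are phrased in terms of \emph{maximum periods of ergodic measures}, coincide with $\bar u_S$ and $\bar\eta_S$ from \eqref{eq.u-eta}, which are phrased in terms of \emph{periods and entropies of irreducible components}. Your sketch never performs this translation. It requires two pieces, both resting on Fact~\ref{f.MSmaxper}: (a) the supremum in (1) equals $\bar u_S(p) = \sup\{h_i : p_i \mid p\}$, because an irreducible component of period $q$ carries measures of any maximum period that is a multiple of $q$, with entropies approaching its Gurevi\v{c} entropy; and (b) the measures counted in (2) are in bijection with the irreducible components counted by $\bar\eta_S(p)$ --- this uses that such a measure lives on a single component $\Sigma_i$ with $p_i \mid p$, so $h_i \le \bar u_S(p)$ forces it to be the unique m.m.e.\ of $\Sigma_i$, hence $p_i = p$ exactly and the component is positive recurrent. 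Uniqueness of the m.m.e.\ per irreducible component gives injectivity, and the converse direction gives surjectivity. Without this bookkeeping the proposal is a plan rather than a proof, and the worry it flags at the end is aimed at a statement already available to you.
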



\subsection{Almost-Borel classification and Markov shifts}
We need the generalization  to the non-mixing case of the characterization and classification of
Markov shifts obtained by Hochman \cite{Hochman}.

First some definitions. An automorphism of a standard Borel space is a \emph{Borel
   system} (see Sec.\ \ref{sec.borelsyst}). We denote by $\pen(S)$ its
 set of ergodic, nonatomic measures.

\begin{definition}
Two Borel systems $(X,S)$ and $(Y,T)$ are \emph{almost-Borel isomorphic} if there exists a Borel 
isomorphism  $\psi:X'\to Y'$ with invariant 
Borel subsets $X'\subset X$ and $Y'\subset Y$ such that:
 \begin{itemize}
 \item $\psi\circ S=T\circ\psi$ on $X'$;
 \item $X\setminus X'$ and $Y\setminus Y'$ are \emph{almost null sets}: $\mu(X\setminus X')=\nu(Y\setminus Y')=0$ for all  $\mu\in\pen(S)$ and $\nu\in\pen(T)$.
  \end{itemize}
\end{definition}

Thus two systems are almost-Borel isomorphic if, in the terminology of \cite{Hochman}, their free parts are Borel isomorphic on full sets. 
We refer to the discussion in \cite[p. 394]{Weiss1} for a comparison
with Borel and  measurable isomorphisms.


Let $T$ be a Markov shift (a \lq\lq subshift of finite type over a countable alphabet\rq\rq, see Sec.\ \ref{sec.back.markov} for this and related definitions). Up to an almost null set, it is a disjoint, at most countable, union of irreducible Markov shifts $T_i$, $i\in I$, not reduced to periodic orbits.  Throughout this paper,
all Markov shifts satisfy:
 \begin{equation}\label{eq.fec0}
  \text{all irreducible components have finite entropy.}
 \end{equation}

For each $T_i$, let $p_i$ be its period,  $h_i>0$ be its  entropy and set $m_i=1$ or $0$ according to whether $T_i$ has or not a nonatomic measure of entropy $h_i$.
Define  two sequences 
over $\NN:=\{1,2,\dots\}$:
 \begin{equation}\label{eq.u-eta}\begin{aligned}
    &\bar u_T(p):=\sup\biggl(\{h_i:i\in I,\; p_i|p\}\cup\{0\}\biggr)\in[0,\infty]
    \text{ and }\\
    &\bar\eta_T(p):=\sum\{m_i:i\in I,\; (h_i,p_i)=(\bar u_T(p),p)\}\in\{0,1,\dots,\infty\}.
 \end{aligned}\end{equation}

We can now state the extension of  Hochman's classification proved in Sec.\ \ref{sec.markov.class}:

\begin{theorem}\label{mainthm.classif}
Two Markov shifts $S,T$ are almost-Borel isomorphic if and only if $(\bar u_T,\bar \eta_T)=(\bar u_S,\bar \eta_S)$.
Moreover, sequences $u,\eta$ coincide with sequences $\bar u_T,\bar \eta_T$ of some Markov shift $T$ if and only if 
 \begin{equation}\label{cond.MS}
    \forall p\geq1\quad
    u(p)=\sup_{q|p} u(q)\text{ and }u(p)=\infty\implies \eta(p)=0.
 \end{equation}
\end{theorem}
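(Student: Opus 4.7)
\emph{Necessity of the invariants and of \eqref{cond.MS}.} First reinterpret the invariants measure-theoretically. Using that an irreducible period-$p_i$ Markov shift has a cyclic decomposition into $p_i$ pieces whose first-return map is mixing, every ergodic measure on it has maximum period divisible by $p_i$, and the sup entropy $h_i$ is attained only by the unique nonatomic \mme\ when it exists (with max period exactly $p_i$). These facts yield
\begin{gather*}
\bar u_T(p)=\sup\{h(\mu):\mu\in\pen(T),\ \text{max period}(\mu)\mid p\},\\
\bar\eta_T(p)=\#\{\mu\in\pen(T):h(\mu)=\bar u_T(p),\ \text{max period}(\mu)=p\}.
\end{gather*}
Since almost-Borel isomorphisms induce entropy- and spectrum-preserving bijections of $\pen$, both invariants are preserved. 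For the necessity of \eqref{cond.MS}, monotonicity of $\bar u_T$ under divisibility is immediate from the definition, and $\bar u_T(p)=\infty$ forces $\bar\eta_T(p)=0$ by the standing assumption \eqref{eq.fec0} that each $h_i$ is finite.

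\emph{Realizability and canonical model.} Given $(u,\eta)$ satisfying \eqref{cond.MS}, I would construct a canonical Markov shift $T^\star=T^\star(u,\eta)$ as the disjoint union, over $p\ge 1$, of $\eta(p)$ positive-recurrent irreducible period-$p$ components of entropy $u(p)$, together with extra non-positive-recurrent period-$p$ components added just enough to realize $u(p)$: none if already attained through a smaller divisor, a single component of entropy $u(p)$ if $u(p)<\infty$ is newly attained, and a sequence of components with entropies tending to infinity if $u(p)=\infty$. The existence of irreducible Markov shifts of prescribed finite entropy, period and recurrence class is classical (renewal-type loop graphs). A direct check using \eqref{cond.MS} then gives $\bar u_{T^\star}=u$ and $\bar\eta_{T^\star}=\eta$, proving the realizability statement.

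\emph{Sufficiency of the classification, and main obstacle.} It remains to show that every Markov shift $T$ with invariants $(u,\eta)$ is almost-Borel isomorphic to $T^\star$. Two ingredients suffice. First, any irreducible period-$p$ Markov shift is almost-Borel isomorphic to a $\ZZ/p$-extension of its mixing first-return map, so Hochman's classification of mixing Markov shifts implies any two irreducible Markov shifts with identical $(p,h,m)$ are almost-Borel isomorphic; this matches the top components of $T$ and $T^\star$ (those counted by $\eta(p)$) one-to-one. Second, one needs an \emph{absorption lemma} extending Hochman's universality: an irreducible period-$p$ Markov shift of entropy $h$ absorbs, almost-Borel, any Markov shift whose ergodic measures all have max period divisible by $p$ and entropy strictly less than $h$ (or $\le h$ when the absorber has no \mme). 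Iterating, each non-top component $T_i$ of $T$ is absorbed into some $T^\star$-component of period $p\mid p_i$; taking $p=p_i$ works, since $T^\star$ then contains a period-$p_i$ component of entropy $\bar u(p_i)\ge h_i$ by construction. The main technical hurdle is this absorption lemma: one must control the interplay between the topological period $p$ of the absorber and the measure-theoretic maximum periods of the absorbed measures --- a constraint vacuous in the mixing case --- and arrange a countable iteration that preserves the \mme\ of the absorber whenever it has one.
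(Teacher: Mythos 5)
Your strategy is essentially the one the paper takes: separate each Markov shift into a collection of distinguished ``top'' components (one per periodic-Bernoulli measure that is maximal for its period, counted by $\bar\eta$) and a residual part, match the top components bijectively, and absorb the residual part into a canonical universal target. The measure-theoretic reinterpretation of $\bar u_T,\bar\eta_T$ is correct and does establish invariance, and your canonical model $T^\star$ is (up to phrasing) the paper's shift $X'\cup X''$ built in the realizability step.

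However, the proof is not complete: the \emph{absorption lemma} you isolate is genuinely the crux, and you state it without proving it while acknowledging it as the main hurdle. This is exactly the content the paper spends Sections~\ref{sec.univ} and~\ref{sec:markov} developing: \Prop\ref{p.UnivMark} (strict $(h,p)$-universality of period-$p$ Markov shifts, obtained from Hochman's mixing case by passing to a cyclically moving subset and its $p$-th power), \Lem\ref{lem.unionSUp} and \Lem\ref{lem.makedisjoint} (handling countable, not-necessarily-disjoint unions of universal pieces), \Thm\ref{univpartexists} and \Cor\ref{useqtheorem}(\ref{absorbing}) (the actual absorption statement you need). In particular, making ``absorption'' precise when the absorbed shift has ergodic measures of many different periods requires the Borel periodic decomposition (\Thm\ref{thm:specdec}), since you must route each measure to the correct period-$p$ universal slot on the nose, not merely almost everywhere for a single measure; none of this machinery appears in your argument. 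A secondary gap: the assertion that any two irreducible Markov shifts with the same $(p,h,m)$ are almost-Borel isomorphic, which you justify by ``$\ZZ/p$-extensions of Hochman's mixing classification,'' is true but is itself a consequence of this same universality apparatus (\Lem\ref{l.analysis} in the paper): each such shift decomposes as (strictly $(h,p)$-universal)$\sqcup$(at most one $p$-Bernoulli measure of entropy $h$), and both pieces are determined up to almost-Borel isomorphism. So the proposal correctly identifies the architecture of the proof but leaves its load-bearing lemma unproved.
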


In  Sec.\ \ref{sec.markov.char}, we find a \lq\lq maximal Markov subsystem\rq\rq\ inside an arbitrary Borel system:

\begin{theorem}\label{mainthm.markovpart}
Any Borel system $(X,S)$ contains an invariant Borel subset $X_U$ such that:
 \begin{enumerate}
  \item $X_U$ is
    almost-Borel 
isomorphic to a Markov shift $T$ with $\bar
    \eta_T\equiv0$;
  \item if some subsystem $Y\subset X$ satisfies the previous property, then $Y\setminus X_U$ is almost null.
 \end{enumerate}
These two properties define $X_U$ up to an almost null set.
\end{theorem}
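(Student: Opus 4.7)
The plan is to extract from $X$ the right entropy-and-period data, realize it as a Markov shift via Theorem \ref{mainthm.classif}, and embed it almost-Borel using a saturated, strictly universal embedding. First I would set, for each $p \geq 1$,
\[
u_X(p) := \sup\{h(\mu) : \mu \in \pen(S),\ p_\mu \mid p\},
\]
where $p_\mu$ denotes the maximum period of $\mu$. Any contributing $\mu$ has $p_\mu \mid p$ and $u_X(p_\mu) \geq h(\mu)$, so $u_X(p) = \sup_{q \mid p} u_X(q)$, and $(u_X, \mathbf{0})$ satisfies \eqref{cond.MS}. The existence half of Theorem \ref{mainthm.classif} then yields a Markov shift $T$ with $\bar u_T = u_X$ and $\bar \eta_T \equiv 0$: concretely, for each $p$ take countably many irreducible components of period $p$ whose entropies strictly approximate $u_X(p)$ from below (tending to $\infty$ when $u_X(p) = \infty$, compatibly with \eqref{eq.fec0}).

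The central step is the almost-Borel embedding $T \hookrightarrow X$. Write $T = \bigsqcup_i T_i$ with $T_i$ irreducible of period $p_i$ and entropy $h_i < u_X(p_i)$. By the very definition of $u_X(p_i)$, $X$ carries, for every $h < u_X(p_i)$, nonatomic ergodic measures of entropy exceeding $h$ with maximum period dividing $p_i$---exactly the input needed for a strict universality embedding theorem, the non-mixing, non-irreducible extension of Hochman's embedding result to be developed in Section \ref{sec.markov.char} alongside Theorem \ref{mainthm.classif}. Applying it to each $T_i$ and arranging the images to be pairwise disjoint (by embedding into successive complements) produces $X_U \subset X$ almost-Borel isomorphic to $T$.

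For the maximality clause, let $Y \subset X$ be invariant Borel and almost-Borel isomorphic to some Markov shift $T'$ with $\bar \eta_{T'} \equiv 0$. Since $\pen(Y) \subset \pen(S)$, one has $\bar u_{T'} \leq u_X = \bar u_T$. The assertion that $Y \setminus X_U$ is almost null is to be packaged into the strict universality: the embedding producing $X_U$ is \emph{maximally saturated}, so that any such $Y$ must lie inside $X_U$ up to an almost null set; equivalently, trying to produce a Markov subsystem with $\bar \eta \equiv 0$ inside $X \setminus X_U$ yields only almost null sets. Uniqueness of $X_U$ up to almost null then follows by applying maximality to any two candidates. The hard part throughout is this strict universality embedding---producing an almost-Borel embedding of an irreducible Markov shift into any Borel system meeting the entropy-and-period requirements, with image essentially maximal among such embeddings. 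This is the non-mixing, non-irreducible extension of Hochman's embedding theorem and constitutes the technical heart of Section \ref{sec.markov.char}.
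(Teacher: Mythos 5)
Your proposed construction rests on a definition of $u_X$ that is incorrect, and this is a fatal gap. You set
\[
u_X(p) := \sup\{h(\mu) : \mu \in \pen(S),\ p_\mu \mid p\},
\]
the supremum of entropies of ergodic measures with period dividing $p$. But the paper's universality sequence $u_S(p)$ (Definition \ref{def.useq}) is
\[
u_S(p) := \sup\{t \geq 0 : (X,S) \text{ contains a strictly $(t,p)$-universal subsystem}\},
\]
and the remarks after that definition explicitly warn that this \emph{need not} equal the supremum of measure entropies. To see why the distinction matters, consider a Borel system $X$ supporting exactly one nonatomic ergodic measure $\mu$, Bernoulli of entropy $h > 0$ and period $1$. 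Your $u_X(1) = h$, so your construction produces a Markov shift $T$ with $\bar u_T(1) = h$ and tries to embed $T$ into $X$. But any irreducible Markov shift of positive entropy carries uncountably many nonatomic ergodic measures, while $X$ carries exactly one; no almost-Borel embedding is possible. The ``strict universality embedding theorem'' you invoke as the technical heart of the argument is simply false if its hypothesis is your $u_X$: the existence of high-entropy measures with the right period is not sufficient to embed Markov shifts. Hochman's universality (Theorem \ref{thm.Hochman}) requires structural hypotheses (being a mixing SFT, or containing SFT subsystems with high entropy), not merely the presence of measures. In the example above, the paper's $u_S(1) = 0$, so the correct $X_U$ is almost null, which is consistent with the theorem's maximality clause: no nontrivial $Y \subset X$ can be almost-Borel isomorphic to a Markov shift with $\bar\eta \equiv 0$.

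The paper's route sidesteps this by \emph{defining} the invariant in terms of embeddability from the outset, and by locating $X_U$ intrinsically: Theorem \ref{univpartexists} constructs $X_U = \bigcup_p X_{Up}$ where each $X_{Up}$ (from Prop.\ \ref{uppartexists}) is cut out via the Borel periodic decomposition together with the Borel entropy function, and is shown to be strictly $(u_S(p),p)$-universal by Lemma \ref{lem.unionSUp}. The maximality and uniqueness of $X_U$ then come from universality (via the Schr\"oder--Bernstein Prop.\ \ref{lem:HCB}), and Lemma \ref{lem.u-eta} translates ``strictly \UEP-universal'' into ``Markov shift with $\bar\eta \equiv 0$.'' Your outline does correctly anticipate that one must realize a Markov shift from period-and-entropy data and embed it while ensuring maximality, and it correctly identifies where the hard content lies; but the invariant feeding into that machine must be the universality sequence, not the entropy supremum, and the embedding cannot be bootstrapped from the existence of measures alone.
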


The condition \lq\lq$\bar \eta_T\equiv0$\rq\rq\ cannot be removed:
consider
the product of a positive entropy shift of finite type 
with the identity map on the unit interval.
This condition and the above result is very natural from the point of view of universality discussed in Sec.\ \ref{s.univ-heur}.

\medbreak

This leads to a characterization of Markov shifts up to almost Borel isomorphism.
We say 
that a measure-preserving system $(S,\mu)$ is \emph{$p$-Bernoulli}
($p\in\NN$) if it is isomorphic to the product of a Bernoulli system
and a circular permutation on $p$ points.\footnote{Note, $p$ is the
  maximum period of $(S,\mu)$ in the terminology of Theorem
  \ref{mainthm.surfclass}.} We call it \emph{periodic-Bernoulli} if we don't want to specify $p$. At the end of Sec. \ref{sec.markov.class}, we prove:

\begin{corollary}\label{cor.manySFTs}
A Borel system $(X,S)$ is almost-Borel isomorphic to a Markov shift if
and only if 
there is a sequence $u:\NN\to[0,\infty]$ with $u(p)=\max_{q|p}u(q)$ such that:
 \begin{enumerate}
  \item for each $p\in\NN$ and $t< u(p)$, there is an almost-Borel embedding of an irreducible Markov shift of period $p$ and entropy $>t$ into $X$;
  \item the set $\mathcal M$ of ergodic measures $\mu\in\pen(S)$ such that for every period $p$ of $\mu$, $h(S,\mu)\geq u(p)$, is at most countable;
  \item each $\mu\in\mathcal M$ is $p$-Bernoulli for some $p\in\NN$ and $h(S,\mu)=u(p)$. 
 \end{enumerate}
\end{corollary}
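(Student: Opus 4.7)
The proof proceeds by using \Thm\ref{mainthm.classif} as the classifying tool, transporting the Markov invariants $(\bar u_T,\bar\eta_T)$ back and forth between $X$ and a suitable Markov shift $T$, with \Thm\ref{mainthm.markovpart} providing the scaffolding to recover $T$ from $X$ in the nontrivial direction.

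For necessity, assume $(X,S)$ is almost-Borel isomorphic to a Markov shift $T$ with irreducible components $T_i$ of period $p_i$, entropy $h_i$, and $m_i\in\{0,1\}$. Set $u:=\bar u_T$: the divisibility relation in \eqref{cond.MS} gives $u(p)=\max_{q|p}u(q)$, and (1) is obtained by pushing the canonical embeddings $T_i\hookrightarrow T$ (for $p_i\mid p$ and $h_i>t$) through the isomorphism. For (2)--(3), any $\mu\in\pen(S)$ corresponds to an ergodic measure on $T$ supported on a single component $T_i$, with period $p_i$ and $h(S,\mu)\le h_i$; the condition defining $\mathcal M$ forces $h(S,\mu)=h_i=u(p_i)$, so $\mu$ is the unique measure of maximal entropy of $T_i$ (whence $m_i=1$), which gives the countability of $\mathcal M$. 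The $p_i$-Bernoulli property of these measures of maximal entropy is the classical Ornstein--Gurevich description for irreducible finite-entropy countable Markov shifts.

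For sufficiency, suppose (1)--(3) hold. Set $\eta(p):=\#\{\mu\in\mathcal M:\mu$ is $p$-Bernoulli, $h(S,\mu)=u(p)\}$: by (2) this is countable, and $(u,\eta)$ satisfies \eqref{cond.MS} (the second part is automatic from the standing assumption \eqref{eq.fec0}). \Thm\ref{mainthm.classif} then produces a Markov shift $T$ realizing $(u,\eta)$. To establish that $X$ is almost-Borel isomorphic to $T$, apply \Thm\ref{mainthm.markovpart} to obtain the maximal Markov subsystem $X_U\subset X$, almost-Borel isomorphic to a Markov shift $T_U$ with $\bar\eta_{T_U}\equiv 0$; by (1) every irreducible shift witnessing $u(p)$ embeds into $X$ and hence, by maximality of $X_U$, into $X_U$, so $\bar u_{T_U}=u$. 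The ergodic measures of $X$ not captured by $X_U$ must be exactly the elements of $\mathcal M$, and each matches via Ornstein's isomorphism theorem the measure of maximal entropy of the corresponding irreducible component of $T$; gluing $X_U\simeq T_U$ with this bijection on $\mathcal M$ yields the desired almost-Borel isomorphism $X\simeq T$.

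The main obstacle is the identification of $X\setminus X_U$ with $\mathcal M$ (up to an almost-null set) in the sufficiency step: one must show that no other ergodic measure escapes. Condition (2) ensures at most countably many possible obstructions, and (3) forces each to be periodic-Bernoulli, matching the measures of maximal entropy of irreducible Markov shifts. Closing the argument requires the universal-embedding technology behind \Thm\ref{mainthm.classif} (extended to the non-mixing setting earlier in the paper) together with a Schroeder--Bernstein combinatorics on the almost-Borel category; the delicate accounting is to ensure that no ergodic measure ``leaks'' between the $\bar\eta=0$ part of $T$ and its maximal-entropy components.
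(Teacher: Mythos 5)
Your necessity direction is fine in substance, though it takes a slightly longer route than the paper: you re-derive from the component structure of $T$ what \Thm\ref{thm.MarkovChar} already packages, and \Prop\ref{p.UnivMark} gives Claim (1) in one stroke. Both arguments work; the paper's is just a one-liner.

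The sufficiency direction has a real gap. You want to identify $\pen(X\setminus X_U)$ with $\mathcal M$ (or at least control it by $\mathcal M$) and then invoke \Thm\ref{thm.MarkovChar}, but that theorem's condition (2) requires the entropy of each measure on $X\setminus X_U$ to equal $u_S(p)$, not $u(p)$. So you must first establish $u = u_S$. Condition (1) gives $u_S \geq u$, and you correctly note that witnesses of $u(p)$ embed into $X_U$, so $\bar u_{T_U} = u_{X_U} = u_S \geq u$. But then you simply assert $\bar u_{T_U} = u$, i.e., the reverse inequality $u_S \leq u$, without justification. This is exactly where Condition (2) enters: if $u_S(p) > u(p)$ for some $p$, then $X_{Up}$ is strictly $(u_S(p),p)$-universal, so it carries uncountably many ergodic measures with maximum period $p$ and entropy in $[u(p),u_S(p))$; since $u(q)\leq u(p)$ for every divisor $q$ of $p$ (your standing divisibility hypothesis), all those measures satisfy $h(S,\mu)\geq u(q)$ for every period $q$ of $\mu$, hence lie in $\mathcal M$, making $\mathcal M$ uncountable and contradicting (2). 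Without this step, $u_S$ could be strictly larger than $u$ at some $p$, and your bijection between $\pen(X\setminus X_U)$ and $\mathcal M$ breaks down.

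A secondary remark: constructing a separate Markov shift $T$ from $(u,\eta)$ and then gluing $X_U\simeq T_U$ with a matching on $\mathcal M$ duplicates the content of \Thm\ref{thm.MarkovChar}. Once $u = u_S$ is in hand, conditions (2)--(3) with $u = u_S$ are literally condition (2) of \Thm\ref{thm.MarkovChar} (note $\pen(X\setminus X_U)\subset\mathcal M$, so countability and periodic-Bernoulli structure transfer directly), and that theorem immediately yields the conclusion; no hand-built isomorphism is needed, and the ``leakage'' worry you raise at the end is already resolved there.
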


The mixing case was analyzed by Hochman (see \cite[Thm. 1.7]{Hochman}
and the discussion that precedes  it).

\begin{remark} \label{hyperremark}
This characterization provides an alternate 
approach to results like Theorem \ref{mainthm.conjug} by
splitting the dynamics between: a \lq\lq top entropy part" which must be shown to carry only
very specific measures; and the rest which carries all possible
measures  \lq\lq below some entropy thresholds".  
If $S$ is a $C^{1+}$ diffeomorphism of a compact manifold and 
$S$ has no zero Lyapunov exponents, then this second part can be analyzed using 
Katok's horseshoes 
(see \cite{BuzziLausanne}).
\end{remark}

\subsection{Factors of Markov shifts}
Thus we are led to find  
conditions guaranteeing that a dynamical system 
has shifts of finite type as large (in entropy) subsystems. 
There is an interest of some vintage in this problem 
 (e.g. \cite{Katok1980, Marcus1985, Petersen1986}).
In Sec.\ \ref{sec.continuousFactorsMS}, we prove 

\begin{theorem}\label{thm.continuousfactorMS}
Let $(X,S)$ be an irreducible Markov shift with period $p$ and let
$\pi:(X,S)\to(Y,T)$ be a continuous, not necessarily surjective, factor map  into a selfhomeomorphism of a Polish space. Let
 $$
    h_*(\pi):=\sup\{ h(T,\pi_*\mu):\mu\in\Proberg(T)\}.
 $$
For any $h<h_*(\pi)$, there is an irreducible shift of finite type
$X'\subset X$ such 
that $h_\topo(X')>h$, $X'$ has period $p$, and the restriction of 
$\pi$ to $X'$ is injective. 
\end{theorem}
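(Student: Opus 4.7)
The plan is to apply Krieger's finite generator theorem to the factor $(T,\pi_*\mu)$ in $Y$, pull the resulting generator back to $X$, and then combinatorially realize this pull-back as the coding of an SFT inside $X$ using first-return loops of the Markov graph.

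First, by definition of $h_*(\pi)$, pick an $S$-ergodic probability measure $\mu$ on $X$ with $h':=h(T,\pi_*\mu)>h$, and write $\nu:=\pi_*\mu$. Hypothesis \eqref{eq.fec0} gives $h(S,\mu)<\infty$, hence $h(T,\nu)<\infty$, so Krieger's finite generator theorem produces a finite Borel partition $\mathcal{Q}=\{Q_1,\ldots,Q_k\}$ of $Y$ with $h_\nu(T,\mathcal{Q})=h(T,\nu)$ such that the symbolic coding $\iota\colon y\mapsto(\mathcal{Q}(T^n y))_{n\in\ZZ}$ is Borel and injective on a $\nu$-full Borel subset $Y_0\subset Y$. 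Refining $\mathcal{Q}$ if necessary, I may further arrange each $Q_i$ to have small diameter in the Polish metric of $Y$ and $\nu$-null boundary, without decreasing entropy. Pulling back via $\pi$, set $\mathcal{R}:=\pi^{-1}\mathcal{Q}$; the $\mathcal{R}$-coding $\rho\colon X\to\{1,\ldots,k\}^{\ZZ}$ equals $\iota\circ\pi$, so any $X'\subset X$ on which $\rho$ is injective automatically has $\pi|_{X'}$ injective.

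Second, construct $X'$ combinatorially inside the Markov shift. Fix a state $v$ of $X$ with $\mu([v])>0$ and consider the countable alphabet $W$ of first-return loops at $v$; since $(X,S)$ is irreducible of period $p$, the lengths of loops in $W$ lie in $p\ZZ$ with gcd equal to $p$. Using the Gurevich approximation combined with $\mu$-Shannon--McMillan--Breiman estimates applied to the induced Bernoulli system on $W$, and exploiting the uniform continuity of $\pi$ on compact finite-alphabet sub-SFTs of $X$, I select a finite family $L^*\subset W$ of loops such that (a) the sub-SFT $X'\subset X$ freely generated by concatenations of loops in $L^*$ is irreducible of period $p$ and has topological entropy strictly greater than $h$, and (b) distinct bi-infinite concatenations of loops from $L^*$ yield distinct $\mathcal{R}$-codings. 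Condition (b) and the previous paragraph then give $\pi|_{X'}$ injective, completing the proof.

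The hard part is step (b). Although $\mathcal{R}$ is a measurable generator for $\mu$, it is only Borel, not clopen, so its coding map is not continuous and cannot be controlled by open-set arguments directly on $X$. The strategy will be to use the uniform continuity of $\pi$ on a compact finite-alphabet sub-SFT $X_F\subset X$ of topological entropy exceeding $h$ (provided by Gurevich) to deduce that, for some window size $N$, the $\mathcal{R}$-label of a point in $X_F$ is determined by its $X_F$-cylinder of length $2N+1$ outside a set of small $\mu$-measure; that is, $\mathcal{R}$ is approximated on $X_F$ by a clopen partition $\mathcal{R}_N$. One then chooses $L^*$ to consist of length-$n$ loops sitting in $X_F$ whose central segments already separate the $\mathcal{R}_N$-codings, counting them via Shannon--McMillan to ensure exponential growth above $e^{nh}$, and invoking the gcd structure of loop lengths at $v$ to recover the correct period $p$.
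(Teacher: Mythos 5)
Your overall architecture — first pass from the noncompact Markov shift to a compact sub-SFT carrying nearly all the entropy of $\pi_*\mu$ via a loop-shift truncation, then handle the compact case — mirrors the paper's two-step reduction (Proposition \ref{p.hstar} plus Theorem \ref{prop.continuousfactorSFT}). The first step is sound and is essentially what the paper does. The problem is in the second step, which you flag yourself as ``the hard part,'' and it is indeed where the argument breaks.

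The difficulty is not merely technical. You pull back a Krieger generator $\mathcal Q$ of $(T,\nu)$ to $\mathcal R=\pi^{-1}\mathcal Q$ and try to make the $\mathcal R$-coding effective on a compact sub-SFT $X_F$ by approximating $\mathcal R$ with a clopen partition $\mathcal R_N$. But the agreement $\mathcal R_N=\mathcal R$ holds only off a set of small $\mu$-measure. The SFT $X'$ you then carve out consists of uncountably many bi-infinite loop concatenations, essentially all of which are $\mu$-atypical: the good set on which $\mathcal R_N$ determines $\mathcal R$ is measure-theoretically large but can be topologically meagre, and nothing forces the points of $X'$ (nor any invariant measure that $X'$ carries) into it. Concretely, for $x\neq x'$ in $X'$ you get at best that their $\mathcal R_N$-codings differ, but that tells you nothing about whether $\pi(x)$ and $\pi(x')$ land in different elements of $\mathcal Q$, which is what injectivity of $\pi|_{X'}$ requires. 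A measurable generator distinguishes $\nu$-a.e.\ points; it cannot give the pointwise, metric separation you need on a set of measure zero for $\nu$.

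The paper avoids this entirely by not invoking any measurable generator. It picks loops $\ell,\tilde\ell$ at a vertex $0$ with $\pi(\ell^\infty)\neq\pi(\tilde\ell^\infty)$, observes that the heteroclinic point $Z=\pi(\ell^\infty.\tilde\ell^\infty)$ has a discrete orbit, and extracts from this a uniform $\eta>0$ such that marker blocks and word blocks in the constructed SFT $X_K$ are $(\eta,n)$-separated in $Y$ (Claims~1 and~2 in the proof of Theorem \ref{prop.continuousfactorSFT}). This is a purely topological/metric separation that holds for \emph{every} point of $X_K$, not just $\mu$-typical ones, which is exactly what injectivity demands. Your approach would need an analogue of this uniform metric separation, and I do not see how to extract one from a Borel partition with only $\nu$-null boundary; on the contrary, distinct points of $X'$ may well map by $\pi$ into the same $Q_i$ or onto its boundary, and your $\mathcal R_N$-coding would be blind to this.

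A secondary, smaller issue: the induced system on a cylinder $[v]$ under a general ergodic $\mu$ is not a Bernoulli system on the loop alphabet $W$, so the phrase ``the induced Bernoulli system on $W$'' overstates the situation; one only needs the ergodic theorem or Shannon--McMillan--Breiman for the induced transformation, so this is repairable, but it is worth being precise since the paper's Claim \ref{periodclaim} is careful to work with the loop shift directly rather than assert independence.
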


Without additional assumptions, $\pi(X)$ can carry measures  
with entropy $> h_*(\pi)$ and unrelated to those of $X$ (see Prop.\ \ref{countablebad}).
Even when $X$ is compact and $h_*(\pi)=h_\topo(\pi(X))=h_\topo(X)$, the \mme's, that is, the \emph{ergodic measures maximizing entropy} for $\pi(X)$, do not have to be images of \mme's of $X$. In fact, we show that they can include uncountably many copies of measures which are not periodic-Bernoulli (Cor.\ \ref{sfttopcor}).

Next 
we assume $\pi$ to be finite-to-one, continuous and with compact image. This forces $h_*(\pi)=h_\topo(\pi(X))=h_\topo(X)$ and the \mme's of $\pi(X)$ to be finitely many periodic-Bernoulli measures. However, 
the \emph{periodic-maximal} measures, i.e.,  the measures maximizing
the entropy among measures \emph{with a given period} can
still be more or less arbitrary (see Cor.\ \ref{badfinite}), in
contrast
to those of Markov shifts. To control this, we use the following property.

\begin{definition}\label{def.AlmostBowen}
 Let $\pi: (X,S) \to  (Y,T)$ 
be a Borel factor map from a Markov shift 
into a Borel system, and $B$ an invariant 
Borel subset of $X$.  
 Then $\pi$ 
is {\it Bowen type}  on $B$ (or relative to $B$) if  
there is a relation $\sim$ 
on the alphabet of $X$ such 
that
the following hold: 
\begin{enumerate} 
\item 
$\pi (x) = \pi (w) \iff x\sim w $, for all $x,w$ in $B$ ,  
\text{and}
\item 
$ x\sim w \, \implies \, \pi (x) = \pi (w)$, 
for all $x,w$ in 
$X$ ,
\end{enumerate}  
where $x\sim w$ means $x_n\sim w_n$ for all $n$. 
If $B=X$, one simply says that $\pi$ is Bowen type.
\end{definition} 

This definition is adapted from a property 
pointed out by Bowen 
\cite[p.13]{Bowenaxioma} 
for surjective continuous factor maps from
shifts of finite type 
to systems associated with Markov partitions. 
More precisely, these factors  are 
David Fried's  {\it finitely presented dynamical 
systems} \cite{Fisher,Friedfp}; these are the expansive systems 
which are continuous factors of shifts of finite type.   

For a Markov shift $Z$, the \emph{\sarig\ set} $Z_{\pm\ret}$ of
  $Z$ is the 
subset of sequences in which some symbol 
appears infinitely often in the past and some symbol 
(not necessarily the same) appears infinitely often in the future. In Sec. \ref{sec.bowen} we prove:

\begin{theorem} \label{bowentypefactors} 
Suppose $(X,S)$  is a 
 Markov shift satisfying  condition \eqref{eq.fec0}
 and 
$\pi: (X,S)\to (Y,T)$ is a Borel 
factor map such that, 
for each irreducible component $Z$ of $X$,
\begin{enumerate}   
\item 
$\pi$ is Bowen type on the \sarig\ set $Z_{\pm\text{ret}}$, and 
\item 
the restriction $\pi|Z_{\pm ret}$ is finite-to-one.
\end{enumerate} 
Then, letting $\bar X$ be the union of the \sarig\ sets $Z_{\pm\ret}$ of the irreducible components $Z$ of $X$,
 \begin{itemize}
  \item $\pi(\bar X)\subset Y$ is almost-Borel isomorphic to a 
Markov shift;
  \item the induced map  $\pen (\bar X) \to \pen (\pi \bar X)$ is surjective. 
\end{itemize}

\end{theorem}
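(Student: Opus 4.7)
The plan is to reduce to a single irreducible component and then assemble. Since $X$ is a countable Borel disjoint union of irreducible Markov shifts $Z_i$ (modulo an almost-null set), so is $\bar X=\bigsqcup_i (Z_i)_{\pm\ret}$; countable Borel disjoint unions of Markov shifts remain Markov shifts, and each ergodic $\nu\in\pen(\pi\bar X)$ concentrates on the image of a single $(Z_i)_{\pm\ret}$. Hence it suffices to fix one irreducible component $(Z,S)$ of period $p$ and entropy $h$ and to prove both conclusions for $\pi|Z_{\pm\ret}$.

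To identify $\pi(Z_{\pm\ret})$ up to almost-Borel isomorphism I would verify the criterion of \Cor\ref{cor.manySFTs} with the sequence $u:=\bar u_Z$. Condition~(1) follows from \Thm\ref{thm.continuousfactorMS} applied inside $Z$: for every $h'<h$ it produces an irreducible shift of finite type $X'\subset Z$ of period $p$ and topological entropy exceeding $h'$, on which $\pi$ is injective, so $\pi(X')$ is an irreducible SFT giving an almost-Borel embedding of $X'$ into the image. For conditions~(2)--(3), the hypothesis that $\pi|Z_{\pm\ret}$ is finite-to-one gives, via the Ledrappier--Walters formula, $h(T,\pi_*\mu)=h(S,\mu)$ for every $\mu\in\pen(Z_{\pm\ret})$, while the finite fibers of $\pi$ force each ergodic $\nu$ on the image to have only finitely many ergodic lifts on $Z_{\pm\ret}$. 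Consequently the \lq\lq critical\rq\rq\ set of ergodic measures on the image corresponds to at most countably many periodic-maximal ergodic measures of $Z$, each periodic-Bernoulli by the Markov-shift theory; the Bowen-type identification from \Def\ref{def.AlmostBowen} together with the two-sided recurrence defining $Z_{\pm\ret}$ then forces $\pi$ to be essentially one-to-one on the support of each such measure, so the image measure remains periodic-Bernoulli.

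For the surjectivity of $\pen(\bar X)\to\pen(\pi\bar X)$, given $\nu\in\pen(\pi\bar X)$, the finite-to-one Borel surjection $\pi|\bar X$ admits a Borel selector (Lusin--Novikov), so $\nu$ has a Borel lift on $\bar X$; Rokhlin's lifting theorem for factor maps then produces an $S$-invariant Borel probability projecting to $\nu$, and its ergodic decomposition supplies the required $\mu\in\pen(\bar X)$, nonatomic because $\nu$ is and the fibers are finite.

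The main obstacle I foresee is verifying condition~(3) of \Cor\ref{cor.manySFTs}: one must upgrade the pointwise preimage control supplied by the Bowen relation to measurable near-injectivity of $\pi$ on the supports of the periodic-maximal ergodic measures. This is precisely where restriction to the Sarig set is essential, since symbols recurring infinitely often in both tails of $\pi$-almost-every point are needed to pin down the $\sim$-representative of its preimage.
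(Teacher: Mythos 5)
The plan to verify \Cor\ref{cor.manySFTs} with the sequence $u:=\bar u_Z$ has a fatal flaw: a finite-to-one Bowen type factor map can \emph{collapse periods}, so the image can carry ergodic measures whose period sets are not controlled by $\bar u_Z$. Concretely, let $Z$ be the period-$2$ irreducible SFT on alphabet $\{a_0,b_0,a_1,b_1\}$ with every transition from a state labeled $0$ to a state labeled $1$ and back, and let $\pi$ send each state to its letter, so $\pi(Z)=\{a,b\}^{\ZZ}$. The relation $a_0\sim a_1$, $b_0\sim b_1$ (plus the diagonal) witnesses that $\pi$ is Bowen type on $Z=Z_{\pm\ret}$, and $\pi$ is exactly $2$-to-$1$. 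Here $\bar u_Z(1)=0$, yet the image full $2$-shift carries a $1$-Bernoulli mme of entropy $\log 2$ and uncountably many totally ergodic measures of positive entropy, so the set $\mathcal M$ of \Cor\ref{cor.manySFTs} built from $u=\bar u_Z$ is uncountable and contains measures with $h(T,\nu)\neq u(1)$ --- conditions (2)--(3) fail outright. Your parenthetical assertion that the Bowen identification ``forces $\pi$ to be essentially one-to-one on the support'' is likewise false in this example. The paper's proof avoids all this by \emph{not} fixing $u$ in advance: for each $\nu$, Proposition \ref{prop.MSforMeasure} builds (via an $m$-fold fibered product over $\sim$, symmetrized into a new irreducible Markov shift $X'$) an auxiliary finite-to-one Bowen extension $\pi':X'\to Y'$ with $\pi'\mu'\cong\nu$ measure-isomorphically, so that the period $q$ of $X'$ matches the periods of $\nu$ and Proposition \ref{bowentypeuniv} then yields a strictly $(h(S'),q)$-universal subsystem of the image; this is exactly the mechanism that adapts the universality sequence to the (possibly changed) periods.

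Two further, smaller points. First, you invoke \Thm\ref{thm.continuousfactorMS} and the Ledrappier--Walters formula, both of which require $\pi$ continuous; here $\pi$ is only Borel, and the necessary passage to a topological setting is precisely what Proposition \ref{bowentypeuniv} supplies by equipping the compact SFT $\Sigma\subset Z_{\pm\ret}$ with the quotient topology induced by the Bowen relation (a closed equivalence, so the quotient is compact metrizable) before applying \Thm\ref{prop.continuousfactorSFT}. Second, in the surjectivity argument a Lusin--Novikov Borel selector gives a lift of $\nu$ as a measure, but not an $S$-invariant one; the paper's Proposition \ref{finitetoonelift} does the invariant lifting explicitly via the uniformization theorem and averaging over the finite fibers, and you should use that (or an equivalent averaging) rather than appeal to a generic ``Rokhlin lifting theorem.''
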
 

Condition (1) above is really about the restrictions $\pi|Z$.

\medbreak

In Sec.\ \ref{sec:surface} we shall apply this theorem to Sarig's symbolic dynamics and deduce \Thm\ref{surfacetheorem} from which Theorems \ref{mainthm.conjug} and \ref{mainthm.surfclass} follow.

\subsection{The universality heuristic}\label{s.univ-heur}

A Borel system $X$ is \emph{universal} with respect to a class
$\mathcal C$ of Borel systems, if any system in $\mathcal C$ can be
almost-Borel embedded into $X$. If, additionally, $X$ belongs to
$\mathcal C$, it is said to be \emph{strictly universal}. Strictly
universal systems, when they exist, are unique up to almost-Borel
isomorphism. In this case, universal systems can be characterized as
unions of an essentially unique 
\lq\lq maximal\rq\rq  strictly universal 
system and a complementary part (see Sec.\ \ref{sec.univ}).

Hochman showed that many systems of entropy 
$h$ are $h$-universal, i.e., universal with respect to the class of Borel systems whose measures have entropy $<h$ (see \Thm\ref{thm.Hochman} and \Prop\ref{p.UnivMark}). The  complementary system mentioned above  
then supports exactly the ergodic measures of entropy $h$, often a unique measure of maximum entropy which 
is Bernoulli. 

This provides a general heuristic: in a suitable 
class of systems, for a suitable notion of ``universal'', 
analyze each system as the union 
of a (large) standard universal part and 
a complementary part (hopefully managable). This approach 
 gives our almost 
Borel results on $C^{1+}$ surface diffeomorphisms 
and Markov shifts, with Hochman's universality refined
to address periods.  The details of this universality approach are 
spelled out in Sections \ref{sec.univ} and \ref{sec:markov}. 

The existence of a large universal part can be rather robust. 
For example, any continuous factor $Y$ of a 
mixing shift of finite type is $h(Y)$-universal (by  
\Thm\ref{prop.continuousfactorSFT}). A related result holds for 
continuous factors of Markov shifts 
(\Thm\ref{thm.continuousfactorMS}). 
In contrast, as indicated earlier, 
the possibilities for the complementary system in $Y$ 
can vary wildly without stronger assumptions (see Sec. \ref{pathology}).

\subsection*{Acknowledgments}
We thank David Fried, Jean-Paul Thouvenot and 
Benjamin Weiss for background 
and references for Bowen's work, the weak Pinsker property and 
the theory of Bernoulli shifts. We also thank B. Weiss for 
referring us to the paper \cite{Kieffer-Rahe} of Kieffer and Rahe, 
on which we rely in Appendix \ref{sec:BPD}. 
M. Boyle gratefully acknowledges the support 
 during this work of the Danish National Research Foundation 
through the Centre for Symmetry and Deformation (DNRF92)
and the hospitality and support of the D\'epartement de Math\'ematiques at Orsay (Universit\'e Paris-Sud).

 This paper is dedicated to Roy Adler, coinventor of
topological entropy
\cite{AdlerKonheimMcandrew}, with gratitude for his kindness
and in appreciation of his mathematical influence. 
This paper considers entropy and period for the almost 
Borel classification of Markov shifts; the seminal  
result of this type was the Adler-Marcus Theorem 
\cite{AdlerMarcus}, 
which classified irreducible 
shifts of finite type up to almost topological conjugacy 
by topological entropy and period.

\section{Definitions and background}\label{sec.back}

We fix notations and recall some facts that we will use without further explanation.

\subsection{Dynamical Systems}
In this paper, a \emph{dynamical system} (or system) $S$ is an automorphism of a space $X$. We shall consider: 
 \begin{itemize}
  \item[-] \emph{topological} dynamical systems (or t.d.s.) given by selfhomeomorphisms of (not necessarily compact) metrizable spaces; 
   \item[-] \emph{measure-preserving} systems given by automorphisms of probability spaces. We shall often abbreviate ergodic mea\-su\-re-pre\-ser\-ving systems, to \emph{ergodic} systems;
  \item[-] \emph{Borel} systems given by Borel automorphisms of standard Borel spaces (see below).
 \end{itemize}
Recall that a \emph{factor map}, resp.\ an \emph{embedding}, is a homomorphism, resp.\ a monomorphism, of the spaces that intertwines the automorphisms. Unless a factor map is said to be \emph{into}, it is assumed to be surjective.  A \emph{subsystem} is a system of the same category given by a restriction to an invariant subspace.

We often use the symbol for the space or for the automorphism
to refer to the system 
and its domain and suppress the structure (topological, Borel,\dots) from the notation, with interpretation 
by context. 

\subsection{Borel spaces} \label{borelsubsec}
A \emph{standard Borel space} \cite[Sec.\ 12]{Kechris} is a set $X$ together with a $\sigma$-algebra $\mathcal X$ generated by a Polish topology, i.e., a topology defined by some distance which turns $X$ into a separable, complete, metric space.
The elements of $\mathcal X$ are called the \emph{Borel sets} of
$X$. 

$f:X\to Y$ is a \emph{Borel map} if
$X$ and $Y$ are standard Borel spaces and the preimage of any Borel subset is Borel. $f$ is a \emph{Borel isomorphism} if it is a bijection such that $f$ and $f^{-1}$ are Borel. Here, no sets are considered negligible. According to Kuratowski's theorem (see \cite[(15.6)]{Kechris}),
all uncountable standard Borel spaces are isomorphic. 

Recall that if $f:X\to Y$ is a Borel map and $A$ is a Borel subset of $X$ such
that $f|A$ is injective, then $f(A)$ is Borel and $f:A\to f(A)$ is a
Borel isomorphism, according to
the Lusin-Souslin Theorem \cite[(15.2)]{Kechris}.

We denote by $\Prob(X)$ the set of
not necessarily invariant probability measures defined over the Borel sets. We endow it with the $\sigma$-algebra generated by the maps
$\mu\mapsto \mu(E)$, $E\in\mathcal X$.
 This makes $\Prob(X)$ into a standard Borel space (see \cite[(17.24)]{Kechris} and 
\cite[beginning of section 17.E]{Kechris}). 

%


\subsection{Almost-Borel systems}\label{sec.borelsyst} Let $(X,S)$ be a Borel system. Then $\Prob(S)\subset\Prob(X)$ is the set of $S$-invariant Borel probability measures of $X$ (henceforth the measures of $S$) and $\Proberg(S)$ is the subset of ergodic invariant measures. $\Prob(S)$ and $\Proberg(S)$ are Borel subsets of $\Prob(X)$, hence they also are standard Borel spaces.

 

An {\it almost null set} for $(X,S)$ is a Borel 
set of measure zero for every $\mu$ in $\pen (S)$, the set of atomless, ergodic measures of $S$. 
By an \emph{almost-Borel system}, we mean a Borel system up to an almost null set. An {\it almost-Borel  map} means a homomorphism of Borel systems  defined on the complement of an almost null set. Almost-Borel embeddings, factors, and isomorphisms are defined in the obvious way. 

We shall need the following Borel maps (see, e.g.,
\cite{BuzziLausanne}), defined on the complement of an almost null set: (1) a map $M:X\to\Proberg(S)$ such that, for
any Borel set $B\subset\Proberg(S)$ and  any $\mu\in\Proberg(S)$:
$\mu(M^{-1}(B))=1$ if and only if $\mu\in B$;\footnote{For compact
  t.d.s.,
we can take
$M(x)=\lim_{n\to\infty}\frac1n\sum_{k=0}^{n-1}\delta_{S^kx}$,  
defined on the Borel set of points for which this weak star limit exists.} 
(2) the map $h:\Prob(S)\to[0,\infty]$ associating to each measure its Kolmogorov-Sinai entropy (see below).

The following almost-Borel variant of the well-known measurable
Schr\"oder-Bernstein theorem \cite[(15.7)]{Kechris} is fundamental
  for us :

\begin{proposition}[Hochman \cite{Hochman}]\label{lem:HCB}
Two Borel systems are almost-Borel isomorphic if and only if there are almost-Borel embeddings of one into the other. 
\end{proposition}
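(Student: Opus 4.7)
One direction is immediate, since an almost-Borel isomorphism restricts to an almost-Borel embedding in each direction. For the nontrivial direction, the plan is to adapt the classical measurable Schr\"oder--Bernstein theorem \cite[(15.7)]{Kechris} to the almost-Borel category.

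The essential preliminary observation is that almost-Borel embeddings pull back almost null sets to almost null sets. If $\phi:X\to Y$ is an almost-Borel embedding with underlying honest Borel injection $\phi:X_0\to Y$ intertwining $S$ and $T$, then for any invariant almost null Borel $N\subseteq Y$, I would argue that every $\mu\in\pen(S)$ concentrated on $X_0$ pushes forward to an element of $\pen(T)$: the pushforward $\phi_*\mu$ is $T$-ergodic because $\phi$ intertwines, and nonatomic because $\phi|X_0$ is injective (so an atom of $\phi_*\mu$ would pull back to an atom of $\mu$). Hence $\mu(\phi^{-1}(N))=\phi_*\mu(N)=0$, showing $\phi^{-1}(N)\cap X_0$ is almost null.

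Armed with this lemma, I would first perform a reduction step: shrink the domains of $\phi$ and $\psi$ to invariant Borel sets $X_0\subseteq X$, $Y_0\subseteq Y$, still with almost null complements, such that $\phi:X_0\to Y_0$ and $\psi:Y_0\to X_0$ are honest, composable Borel injections. This amounts to iteratively intersecting with $\phi^{-1}(Y^{(n)})$ and $\psi^{-1}(X^{(n)})$ and passing to the limit; the pullback observation above ensures each intersection only discards almost null material, and invariance is preserved because $\phi$ and $\psi$ intertwine.

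With this in place, I would carry out the standard Schr\"oder--Bernstein construction: set $A_0:=X_0\setminus\psi(Y_0)$, $A_{n+1}:=\psi\phi(A_n)$, and $A:=\bigcup_{n\geq 0} A_n$ (each $A_n$ is Borel by the Lusin--Souslin theorem applied to the injective Borel map $\psi\circ\phi$). Define $\theta:X_0\to Y_0$ to equal $\phi$ on $A$ and $\psi^{-1}$ on $X_0\setminus A$. The classical argument shows $\theta$ is a Borel bijection. To check it intertwines, note that $\psi(Y_0)$ is $S$-invariant (since $\psi$ intertwines), so $A_0$ is $S$-invariant, and inductively $S A_{n+1}=S\psi\phi A_n=\psi\phi S A_n=A_{n+1}$; hence $A$ is $S$-invariant. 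On $A$, $\theta\circ S=\phi\circ S=T\circ\phi=T\circ\theta$; on $X_0\setminus A$ the relation $\psi^{-1}\circ S=T\circ\psi^{-1}$ holds because $\psi$ does. Thus $\theta$ is the desired almost-Borel isomorphism.

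The main technical obstacle is the reduction step, which rests entirely on the pullback lemma above; once $\phi$ and $\psi$ are made honestly composable on invariant Borel sets with almost null complements, the remainder is a direct dynamical transcription of Schr\"oder--Bernstein.
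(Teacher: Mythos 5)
The paper gives no proof here; it cites Hochman \cite{Hochman}. Your argument is correct and follows what is, as far as I can tell, essentially Hochman's own Schr\"oder--Bernstein strategy. The pullback lemma is the crux, and your verification is sound: for $\mu\in\pen(S)$ with $\mu(X_0)>0$, ergodicity together with the invariance of $X_0$ forces $\mu(X_0)=1$, and then $\phi_*\mu$ is $T$-invariant and $T$-ergodic because $\phi$ intertwines and nonatomic because $\phi|X_0$ is injective, so $\mu(\phi^{-1}(N))=\phi_*\mu(N)=0$; for $\mu(X_0)=0$ the conclusion is trivial (this case deserves one explicit sentence, since you phrased the lemma only for measures ``concentrated on $X_0$''). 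With that, the iterated intersection $X^{(n+1)}=X^{(n)}\cap\phi^{-1}(Y^{(n)})$, $Y^{(n+1)}=Y^{(n)}\cap\psi^{-1}(X^{(n)})$ removes only a countable union of almost null sets, preserves invariance because the preimage of an invariant set under an intertwining map is invariant, and produces a composable pair on invariant full domains. The Schr\"oder--Bernstein decomposition with $A_0=X_0\setminus\psi(Y_0)$ and $A_{n+1}=\psi\phi(A_n)$ then yields a Borel bijection by Lusin--Souslin, and it intertwines because $\psi(Y_0)$, hence $A_0$, hence each $A_n$, is $S$-invariant (using $S\psi\phi=\psi T\phi=\psi\phi S$). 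No gaps.
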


\subsection{Entropy} 
\label{sec:entropy} 
The \emph{topological entropy} of a compact t.d.s. $(Y,T)$ is denoted by $h_\topo(T)$. The \emph{Kolmogorov-Sinai entropy}  of a measure-preserving system $(S,\mu)$ is denoted by $h(S,\mu)$. We define the \emph{Borel entropy} of a Borel system $(X,S)$ to be $h(S):=\sup\{h(S,\mu):\mu\in\Prob(S)\}$. We shall often call any of these the entropy of $T$, $(S,\mu)$ or $S$.


The \emph{variational principle for entropy} states that if $(Y,T)$ is a compact t.d.s., its Borel entropy $h(T)$ coincides with its \emph{topological entropy} $h_\topo(T)$.
An  \emph{ergodic measure of maximum entropy} (or \mme) for 
 $(X,S)$ is a measure $\mu\in\Proberg(S)$ such that $h(S,\mu)=h(S)$. It does not need to exist or be unique, even for compact t.d.s.

We will use the Bowen-Dinaburg formulas to compute $h_\topo(T)$ in terms of  \emph{dynamical $(\epsilon,n) $-balls} 
$
B(p,\epsilon,n ) = \{ y\in Y: 0\leq k < n \implies 
\text{dist}(T^kp,T^ky) < \epsilon \}\ .
$ 
Recall the following for a compact subset $C$ of $Y$ and $\epsilon >0$.
The integer $r_\text{span}(\eps,n,C,T)$ is the minimal cardinality of $(\eps,n)$-spanning sets for $C$ and 
$r_{\text{sep}}(\epsilon, n, C, T)$ is the maximal cardinality of an 
$(\epsilon,n)$-separated subset of $C$.  
We have 
\begin{equation} 
\label{eq.BowenFormulaCover}
\begin{aligned} 
h_{\text{sep}} (C,T,\eps) \ &\ := 
       \limsup_{n\to \infty }
\frac 1n \log r_{\text{sep}}(\epsilon, n, C, T) \ , \\ 
h_{\text{span}} (C,T,\eps) \ &\ := 
       \limsup_{n\to \infty }
\frac 1n \log r_{\text{span}}(\epsilon, n, C, T) \ , \quad
\text{and}\\  
h_\topo(Y)\  &=\  \lim_{\epsilon \to 0} h_{\text{sep}} (Y,T,\eps) \ =  \ 
\lim_{\epsilon \to 0} h_{\text{span}} (Y,T,\eps) \ . 
\end{aligned}
\end{equation} 

We refer to \cite{KHbook,Petersenbook,Waltersbook} for more background.

\subsection{Markov shifts}\label{sec.back.markov}
  A \emph{countable state Markov shift} (or just Markov shift) is $(X,S)$ where $X\subset V^\ZZ$ for some countable (maybe finite) set $V$ and for some $E\subset V^2$:
 $
    X = \{ x\in V^\ZZ:\forall n\in\ZZ\; (x_n,x_{n+1})\in E\}
 $
and $S:X\to X$ defined by
 $
    S((x_n)_{n\in\ZZ})=(x_{n+1})_{n\in\ZZ}.
 $
The directed graph $G=(V,E)$ is a \emph{vertex} presentation of $(X,S)$. The distance $d(x,y)=\exp\left(-\inf\{|k|:x_k\ne y_k\}\right)$ turns $X$ into a separable, complete metric space and $S$ into a homeomorphism.

A finite or infinite sequence $x=(x_i)_{i\in I}$ is a \emph{path} on
the graph $G$ if $I\subset\ZZ$ is an interval, each $x_i\in V$ and
each $(x_i,x_{i+1})\in E$  whenever $\{i,i+1\}\subset I$. The length $|x|$ of $x$ is the cardinality of $I$. If $|x|<\infty$, then we call it a word and define the
\emph{cylinder}: 
$[x]_X$ (or just $[x]$) to be 
$\{y\in X:\forall i\in I\; x_i=y_i\}$. 

If $x\in X$ and $a\leq b$ are two integers, $x|_a^b$ \text{is} the word $x_ax_{a+1}\dots x_{b-1}$ of length $b-a$. A \emph{loop} of length $n$ based at a vertex $v$  is a finite word $\ell_0\dots\ell_{n-1}$ such that $\ell_0=v$ and $\ell_0\dots\ell_{n-1}\ell_0$ is a path on $G$. We note that the Gurevi\v{c} entropy (see \cite{Gurevic1970}) of a Markov shift, defined in terms of its loops, is equal to its Borel entropy.

The classical \emph{shifts of finite type} (or SFTs) are the topological dynamical systems topologically isomorphic to a compact Markov shift, or equivalently, to a Markov shift that can be presented by a finite graph. We refer to \cite{LindMarcus1995} for background.

The Markov shift $(X,S)$ is \emph{irreducible} if it can be presented by a strongly connected graph $G$, i.e., such that any two vertices $u,v$ can be joined by a path from $u$ to $v$. In this case, its \emph{period} is the greatest common divisor of the lengths of all loops on $G$. 
$(X,S)$ is \emph{mixing} if it is irreducible with period $1$. 

Any Markov shift $(X,S)$ can be written as the disjoint union of  irreducible Markov shifts $(X_j,S_j)$, $j\in J$ with $J$ countable (possibly finite), and a set of measure zero with respect to any invariant measure. This decomposition is unique (up to the indexing) and the Markov subshifts $(X_j,S_j)$, $j\in J$, are called the \emph{irreducible components} of $(X,S)$.
 
On an irreducible period $p$ Markov
shift 
$(X,S)$ with
finite Borel entropy, the measure of maximal
entropy (or \mme), if it exists, is unique and 
{\it $p$-Bernoulli}. Moreover:

\begin{fact}\label{f.irred-hp}
For any $h\in(0,\infty)$ and $p\in\NN$, one can find two irreducible
Markov shifts with entropy $h$ and period $p$: one with a measure of
maximum entropy, one without. 
\end{fact}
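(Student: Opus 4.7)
\emph{Plan.} I would prove Fact~\ref{f.irred-hp} in two steps: first settle the mixing case $p=1$, then bootstrap to arbitrary $p$ by a cyclic $p$-cover. For the bootstrap, given any irreducible Markov shift $Y$ of period $1$ and entropy $h$ presented on a directed graph $(V,E)$, I would form the Markov shift $Y^{(p)}$ on vertex set $V\times\ZZ/p\ZZ$ with an edge from $(v,i)$ to $(v',i+1\bmod p)$ whenever $(v,v')\in E$. A direct verification shows $Y^{(p)}$ is irreducible with period exactly $p$, that its Borel entropy equals that of $Y$, and that the projection to the first coordinate sets up a bijection between shift-invariant measures on $Y^{(p)}$ and those on $Y$; in particular $Y^{(p)}$ carries an \mme\ if and only if $Y$ does. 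So from now on I focus on $p=1$.

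For $p=1$, I would use the loop (or renewal) shift at a single vertex $v_0$ obtained by attaching $a_n$ distinct loops of length $n$ (each realized by $n-1$ fresh auxiliary vertices). This shift is irreducible with period $\gcd\{n:a_n>0\}$, and by the Gurevi\v{c}--Sarig theory its Borel entropy equals $-\log r$ where $r$ is the smallest positive zero of $1-f(z)$ with $f(z):=\sum_n a_n z^n$; moreover it is positive recurrent, hence carries a unique (Bernoulli) \mme, exactly when in addition $f'(r)<\infty$, and otherwise carries no \mme. So it suffices, for any fixed $h>0$, to produce integer sequences $(a_n)$ satisfying $\sum_n a_n e^{-hn}=1$ with $a_1\geq 1$ (to force the period to be $1$), once with $f'(e^{-h})<\infty$ and once with $f'(e^{-h})=\infty$.

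For the positive-recurrent (\mme) version, take $(a_n)$ by the greedy rule $a_n:=\lfloor e^{hn}(1-\sum_{k<n}a_k e^{-hk})\rfloor$. An easy induction gives $a_n< e^h+1$ for all $n$, $a_1=\lfloor e^h\rfloor\geq 1$, and $\sum_{k\leq n}a_k e^{-hk}\to 1$; the uniform bound on $a_n$ yields $f'(e^{-h})\leq(e^h+1)\sum_n n\,e^{-h(n-1)}<\infty$. For the null-recurrent (no \mme) version, take instead $a_n$ of size $\lceil c\,e^{hn}/n^2\rceil$ for large $n$, with the constant $c$ chosen and finitely many small terms adjusted so that $\sum_n a_n e^{-hn}=1$ exactly and $a_1\geq 1$; then $f'(e^{-h})=\sum_n n\,a_n\,e^{-h(n-1)}$ diverges like $\sum 1/n$. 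The only real (minor) nuisance is the integer-arithmetic bookkeeping needed to land on $\sum a_n e^{-hn}=1$ exactly; everything else is either routine graph-theoretic verification for the $p$-cyclic cover or a direct invocation of the positive/null/transient recurrence dichotomy for loop shifts.
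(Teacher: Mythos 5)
Your plan is the standard one — reduce to period $1$ via a cyclic $p$-cover, then realize both behaviours at a prescribed entropy by loop (renewal) shifts using the Vere-Jones/Gurevi\v{c} positive/null recurrence dichotomy. The paper states Fact~\ref{f.irred-hp} without proof, so there is nothing to compare against; your route is the expected one. One small overstatement and one genuine gap.

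The overstatement: the projection $Y^{(p)}\to Y$ is \emph{not} a bijection on invariant measures; an ergodic $\mu$ on $Y$ that itself admits a $p$-cyclic decomposition has $p$ distinct ergodic lifts. What you actually use, and should say, is only that it is a $p$-to-one factor map: this preserves the entropy of each measure in both directions (so $h(Y^{(p)})=h(Y)$), and it makes $Y^{(p)}$ carry an ergodic \mme\ if and only if $Y$ does (push forward an \mme\ one way; lift $\mu$ to $\mu\times\text{unif}$ and take an ergodic component the other way).

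The gap is in the null-recurrent construction: ``take $a_n=\lceil c\,e^{hn}/n^2\rceil$ for large $n$ and adjust $c$ together with finitely many small terms to land on $\sum_n a_n e^{-hn}=1$'' cannot succeed in general. For fixed $N$, the tail $\sum_{n\geq N}\lceil c\,e^{hn}/n^2\rceil e^{-hn}$ is a step function of $c$, and the contribution of the finitely many adjusted low terms ranges over a countable set, so for generic $h$ (say $e^{-h}$ irrational) the exact value $1$ is not attainable by such a finite adjustment. The fix is a greedy scheme with a prescribed, slowly vanishing reserve, exactly parallel to your positive-recurrent construction: after a short head ensuring $a_1\geq 1$, define
$a_n:=\max\bigl(0,\bigl\lfloor e^{hn}\bigl((1-S_{n-1})-\tfrac{c}{n+1}\bigr)\bigr\rfloor\bigr)$
with $S_n=\sum_{k\leq n}a_k e^{-hk}$ and $c>0$ chosen so that $1-S_{n_0-1}=c/n_0$ at the start of the scheme. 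Induction gives $\tfrac{c}{n+1}\leq 1-S_n\leq \tfrac{c}{n+1}+e^{-hn}$, so $S_n\to 1$ (hence $f(e^{-h})=1$, recurrent with entropy $h$) while $a_n e^{-hn}\geq \tfrac{c}{n(n+1)}-e^{-hn}$, so $\sum_n n\,a_n e^{-hn}=\infty$ (null recurrent, no \mme). Your positive-recurrent greedy construction is fine as written.
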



Finally, we note that from a directed graph $G=(V,E)$ 
(now possibly with multiple edges from one vertex to another)
one has also the {\it edge shift} associated to $G$. This is a Markov 
shift whose alphabet is the set of edges of $G$. 
In terms of the earlier definition, the edge shift of $G$ 
is defined by a new graph $G'$, whose vertex set is $E$, in which
there is an edge from $e_1$ to $e_2$ iff the terminal vertex in $G$ of 
$e_1$ equals the initial vertex in $G$ of $e_2$. We will use the edge shift 
presentation in Sec.\ \ref{pathology}.  
We refer to \cite{Kitchens1998} for more background on Markov shifts.

\subsection{Periods of measures and Borel decomposition}
\label{periodssubsec} 


Let $(S,\mu)$  be an ergodic system. Recall the notion of periods from eq.\ \eqref{eq.sprat}. Note that if $p$ is a period, then any positive divisor of $p$ is also a period and that $p$ is a period iff there is a $p$-cyclic partition modulo $\mu$, i.e., $\{X_0,X_1,\dots,X_{p-1}\}\subset\mathcal X$ such that $\mu(\bigcup_{i=0,\dots,p-1} X_i)=1$ and $\mu(X_i\cap X_j)=0$ for all $0\leq i\ne j\leq p-1$.

Observe that not every measure has a maximum period (consider
odometers). 
If it exists, then the set of all periods is the set of divisors of the maximum period. Also having maximum period equal to $1$ is equivalent to 
$\sprat(S,\mu)=\{1\}$ and 
(because $(S,\mu)$ is ergodic) it is equivalent to total ergodicity (i.e., the ergodicity of all $(S^n,\mu)$, $n\geq1$).

\begin{fact}\label{f.MSmaxper}
Given an irreducible Markov shift $X$ with period $p$ and entropy $h$, the supremum of the entropies of ergodic measures with maximum period $p$ is equal to $h$. Conversely, for any ergodic invariant measure carried by $X$, the maximum period, if it exists, is a multiple of $p$.
\end{fact}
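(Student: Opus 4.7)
My plan is to establish the two claims separately: the converse follows quickly from the cyclic decomposition of the defining graph, while the main assertion requires approximating $h$ from below by topological entropies of finite irreducible sub-SFTs of $X$ whose period is \emph{exactly} $p$. The point of the latter approach is that the unique MME of such a sub-SFT is $p$-Bernoulli and hence has maximum period equal to $p$.

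For the converse, let $\mu\in\Proberg(S)$ and suppose its maximum period $q$ exists. Since $X$ is irreducible of period $p$, the vertex set $V$ of its defining graph $G$ admits a cyclic partition $V=V_0\sqcup\cdots\sqcup V_{p-1}$ such that every edge of $G$ goes from some $V_i$ to $V_{i+1}$ (indices modulo $p$). Setting $X_i:=\{x\in X:x_0\in V_i\}$ for $0\leq i<p$, the map $S$ cyclically permutes these sets, and $S$-invariance of $\mu$ forces $\mu(X_i)=1/p$ for every $i$. Thus $\{X_0,\dots,X_{p-1}\}$ is a $p$-cyclic partition modulo $\mu$, so $p$ is a period of $(S,\mu)$ and therefore divides $q$.

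For the forward direction, fix $h'<h$ and seek an ergodic measure on $X$ with maximum period exactly $p$ and entropy $>h'$. By Gurevi\v{c}'s characterization, the Borel entropy of the irreducible Markov shift $X$ equals the supremum over finite strongly connected subgraphs $F\subset G$ of the topological entropies $h_{\topo}(X_F)$, where $X_F$ denotes the sub-SFT induced by $F$. So pick a finite strongly connected $F_0\subset G$ with $h_{\topo}(X_{F_0})>h'$. The unique MME of any irreducible finite SFT of period $p$ exists (classical) and is $p$-Bernoulli; being the product of a Bernoulli system with the circular permutation on $p$ points, such a system has rational spectrum equal to the $p$th roots of unity, hence maximum period exactly $p$. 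It therefore suffices to enlarge $F_0$ to a finite strongly connected subgraph of $G$ with period precisely $p$ while keeping the topological entropy $>h'$.

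The main obstacle is that the period of $F_0$, although a multiple of $p$, may be strictly greater. To correct this, fix a vertex $v_0\in F_0$ and use that, since $G$ is irreducible of period $p$, all sufficiently large multiples of $p$ arise as lengths of loops at $v_0$ in $G$; in particular one may pick two such loops $\gamma_1,\gamma_2$ of lengths $pn_1,pn_2$ with $\gcd(n_1,n_2)=1$. The enlarged subgraph $F:=F_0\cup\gamma_1\cup\gamma_2$ remains strongly connected (every vertex on $\gamma_i$ reaches and is reached from $v_0\in F_0$ via $\gamma_i$ itself), and its loop-length gcd is a multiple of $p$ (as $F\subset G$) that divides $\gcd(pn_1,pn_2)=p$, hence equals $p$. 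Monotonicity gives $h_{\topo}(X_F)\geq h_{\topo}(X_{F_0})>h'$, and the MME of $X_F$ provides the desired ergodic measure on $X$.
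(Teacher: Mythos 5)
Your proof is correct. The paper states Fact~\ref{f.MSmaxper} without proof (treating it as standard), so there is no internal argument to compare against; your write-up supplies exactly the kind of argument the paper implicitly relies on. The converse via the vertex cyclic decomposition of a period-$p$ irreducible graph is routine and right. For the forward direction, the three ingredients you combine are all consistent with how the paper uses them elsewhere: the Gurevi\v{c} characterization of entropy as $\sup_F h_\topo(X_F)$ over finite strongly connected subgraphs (invoked in Sec.~\ref{sec.back.markov}), the uniqueness and $p$-Bernoulli structure of the MME of an irreducible finite SFT of period $p$ (used in Lemma~\ref{l.analysis} via Friedman--Ornstein and Adler--Shields--Smorodinsky), and the fact that a $p$-Bernoulli system has rational spectrum precisely the $p$th roots of unity (used implicitly throughout Sec.~\ref{sec:markov}). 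Your device of restoring the period — adjoining two loops at a common vertex of coprime multiples of $p$ to force the loop-length gcd of the enlarged finite subgraph down to exactly $p$ while preserving strong connectivity and not decreasing entropy — closes the one genuine gap (that a high-entropy finite subgraph could a priori have period a proper multiple of $p$), and the gcd computation $\gcd(pn_1,pn_2)=p$ together with $F\subset G$ forcing $p\mid\mathrm{per}(F)$ is exactly right.
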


In the above definitions, the partition is relative to $\mu$. It is important for our purposes that we can improve this as follows.


\begin{theorem}[Borel periodic decomposition]\label{thm:specdec}
Let $(X,T)$ be an automorphism of a standard Borel space. For each
integer $p\geq1$, there exists a Borel partition
$P(p):=\{P_1,\dots,P_{p},P_*\}$ of $X$ such that:
 \begin{itemize}
   \item $T(P_*)=P_*$ and $T(P_i)=P_{i+1}$ for all $i=1,\dots,p$ ($P_{p+1}:=P_1$);
 \item for any $\mu\in\Proberg(T)$, $\mu(P_*)=0$ if and only if $p$ is a period of $(S,\mu)$.
 \end{itemize}
\end{theorem}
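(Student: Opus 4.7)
\emph{Plan.} I plan to locate the exceptional set $P_*$ using the Borel ergodic decomposition map $M:X\to\Proberg(T)$ (Sec.~\ref{sec.borelsyst}) and to construct the cyclic partition $\{P_i\}$ on its complement $X_p$ by reading off the label of $x$ in the ergodic decomposition of the iterate $T^p$. Concretely, let $\mathcal P_p\subset\Proberg(T)$ be the set of ergodic measures having $p$ as a period and set $P_*:=X\setminus M^{-1}(\mathcal P_p)$ (including the almost-null set on which $M$ is undefined). Then $P_*$ is $T$-invariant because $M\circ T=M$, and since $M(x)=\mu$ for $\mu$-a.e.\ $x$, the equivalence $\mu(P_*)=0$ iff $\mu\in\mathcal P_p$ iff $p$ is a period of $(T,\mu)$ is immediate.

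\emph{Borelness of $\mathcal P_p$.} This rests on the spectral characterization: $e^{2\pi i/p}\in\sprat(T,\mu)$ iff the averages
$$
  A_{p,N}(f):=\frac1N\sum_{n=0}^{N-1}e^{-2\pi i n/p}\,f\circ T^n
$$
converge in $L^2(\mu)$ to a nonzero limit for some $f$. Ranging $f$ over a fixed countable family of simple Borel functions that is dense in every $L^2(\mu)$ (rational combinations of indicators from a countable generating subalgebra), and using that $\|A_{p,N}(f)\|_{L^2(\mu)}^2$ depends Borel-measurably on $\mu$, presents $\mathcal P_p$ as a countable union of Borel sets, hence Borel.

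\emph{Constructing $\phi$ on $X_p$.} For $\mu\in\Proberg(T)$ the action of $T$ on the ergodic components of $(T^p,\mu)$ is transitive (by ergodicity of $\mu$) and has order dividing $p$ (since $T^p$ fixes each component), so there are at most $p$ such components, with equality exactly when $\mu\in\mathcal P_p$. Let $M_p:X\to\Proberg(T^p)$ be the Borel ergodic decomposition for $T^p$; then $T_*$ is a Borel automorphism of $\Proberg(T^p)$, and for $x\in X_p$ the orbit $\{T_*^kM_p(x):0\le k<p\}$ has exactly $p$ elements. Fix any Borel linear order on the standard Borel space $\Proberg(T^p)$, let $\nu(x)$ be the minimum of that orbit (a $T$-invariant Borel function of $x$), and define $\phi(x)\in\{0,\dots,p-1\}$ as the unique $k$ with $M_p(x)=T_*^k\nu(x)$. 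Because $M_p\circ T=T_*\circ M_p$, one has $\phi\circ T=\phi+1\pmod p$ pointwise on $X_p$, and $\phi$ is Borel. Setting $P_i:=\phi^{-1}(i-1)$ for $i=1,\dots,p$ completes the construction.

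\emph{Expected main obstacle.} The two delicate steps are (i) making the predicate ``$p$ is a period'' genuinely Borel, handled by the spectral-average criterion above, and (ii) rigorously identifying the number of $T^p$-ergodic components of $\mu\in\mathcal P_p$ with $p$, handled by the transitivity-and-order-dividing-$p$ argument together with the existence (mod $\mu$) of a $p$-cyclic partition. Once these are in place, no Rokhlin tower or marker-lemma construction is needed: the $T^p$-ergodic components themselves furnish Borel labels for the cyclic partition on the nose, not merely modulo null sets.
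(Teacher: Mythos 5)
Your construction is correct in spirit, and it does take a genuinely different route from the paper's: the paper reduces the problem to the Kieffer--Rahe selection theorem \cite{Kieffer-Rahe} applied to the collections $\mathcal P_\mu$ of hatted cyclic partitions, whereas you sidestep the selection theorem entirely and build the partition directly out of the Borel ergodic-decomposition maps $M$ and $M_p$. Interestingly, both arguments hinge on the same trick at the crucial moment, namely singling out a canonical $T^p$-ergodic component by means of a Borel linear order $\prec$: in the paper's proof this is the selection $\beta:\mathcal D\to G_2$ defined through the set $B=\{m\in G_2: m\prec T^jm,\ 1\le j<p\}$, and in yours it is $\nu(x)=\min_{\prec}\{T_*^kM_p(x):0\le k<p\}$. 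Your route is more elementary in that it avoids quoting the selection theorem, at the modest cost of having to secure the exact pointwise equivariance $M_p\circ T=T_*\circ M_p$ on a $T$-invariant full domain: this does not follow from the bare defining property of an ergodic-decomposition map and should be stated explicitly. (The Birkhoff-average realization after Borel-embedding $X$ into a compact metric system does the job, just as in the proof of Lemma~\ref{lem:empiric} material in Appendix~\ref{sec:BPD}.)

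There is one genuine but easily repaired gap. You assert that for \emph{every} $x\in X_p:=M^{-1}(\mathcal P_p)$ the orbit $\{T_*^kM_p(x):0\le k<p\}$ has exactly $p$ elements, but this holds only $\mu$-a.e.\ for each $\mu\in\mathcal P_p$; pointwise it can fail, because nothing couples $M(x)$ and $M_p(x)$ on the exceptional null sets where these maps disagree with the measure-theoretic ergodic decompositions. The fix is routine: replace $X_p$ by $X_p':=X_p\cap\bigl\{x:\#\{T_*^kM_p(x):0\le k<p\}=p\bigr\}$, which is still Borel and $T$-invariant (the orbit set depends only on the $T$-orbit of $x$), and set $P_*:=X\setminus X_p'$; the discarded points are $\mu$-null for every $\mu$ having $p$ as a period, so both bullet points survive. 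With this correction, and with $\mathcal P_p$ presented as an explicit countable Boolean combination of Borel conditions in $\mu$ (say $\mathcal P_p=\bigcup_f\bigcup_{q\in\mathbb Q_{>0}}\bigcap_{M\ge1}\bigcup_{N\ge M}\{\mu:\|A_{p,N}(f)\|_{L^2(\mu)}>q\}$), your argument is a complete and rather more hands-on alternative to the proof in the paper.
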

Though related results exist (see \cite[remark on top of page 399]{Weiss1}), we could not find this statement in the literature, hence a proof is given in Appendix \ref{sec:BPD}.

\section{Universal systems}\label{sec.univ}

We study Markov shifts as almost-Borel systems. In this section, we perform the part of the analysis that is conveniently done in the language of universality (already used by Hochman \cite{Hochman}, following Benjamin Weiss, e.g., \cite{Weiss2}).  

\begin{definition}
Let $\mathcal C$ be a class of almost-Borel systems. An almost-Borel
system $(X,S)$ is \emph{$\mathcal C$-universal} if it contains (the
image of) an almost-Borel embedding of any system in $\mathcal C$. If,
additionally, $(X,S)\in\mathcal C$, then it is said to be
\emph{strictly} $\mathcal C$-universal.\footnote{This is related
  to  but distinct from the notion of a terminal object in category theory.} 
\end{definition}

We build and classify \lq\lq maximal universal parts\rq\rq\ of arbitrary almost-Borel systems. The next section will relate these to Markov shifts by appealing to Hochman's theorem \cite{Hochman}.

\subsection{Period-universal systems}

Following \Prop\ref{lem:HCB}, \lq the\rq\ strictly universal system with respect to a given class, if it exists, is unique up to almost-Borel isomorphism. Hochman identified the strictly universal systems with respect to the classes  $\mathcal B(t)$, $t\geq0$, of Borel systems $(X,S)$ such that for all $\mu\in\pen(S)$, $h(S,\mu)<t$. 

We consider for each $t\geq0$ and $p\in\NN$, the class $\mathcal B(t,p)$ of systems whose measures $\mu\in\pen(S)$ satisfy: $p$ is a period and $h(S,\mu)<t$. For short we write that a system is \emph{$t$-universal}, resp.\ \emph{$(t,p)$-universal} if it is $\mathcal B(t)$-universal, resp.\ $\mathcal B(t,p)$-universal.
We will repeatedly use (see \Prop1.4(3) of \cite{Hochman} in the case $p=1$ --its proof generalizes):

\begin{lemma}\label{lem.unionSUp}
For $p\in \NN$  and $h\in[0,\infty]$, a countable union of strictly 
$(h_n,p)$-universal 
systems, is strictly $(h,p)$-universal with $h=\sup h_n$.  
\end{lemma}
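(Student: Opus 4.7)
The plan is to take the disjoint union $(X,S) := \bigsqcup_n (X_n,S_n)$ of the given strictly $(h_n,p)$-universal systems and verify the two defining requirements: first, that $(X,S)$ itself belongs to $\mathcal B(h,p)$, and second, that every system in $\mathcal B(h,p)$ admits an almost-Borel embedding into $(X,S)$. The first requirement is essentially immediate. An ergodic nonatomic measure $\mu\in\pen(S)$ must be supported in a single invariant summand $X_n$; since $(X_n,S_n)\in\mathcal B(h_n,p)$, the measure $\mu$ has $p$ as a period and entropy strictly less than $h_n\le h$, so $(X,S)\in\mathcal B(h,p)$.

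For universality, I would slice an arbitrary $(Y,T)\in\mathcal B(h,p)$ along the entropy filtration provided by the $h_n$'s. Using the Borel ergodic decomposition map $M\colon Y\to\pen(T)$ (defined off an almost null set) and the Borel entropy functional $\mu\mapsto h(T,\mu)$, I set
\[
  A_n := \{\mu\in\pen(T) : h(T,\mu)<h_n\},\qquad B_n := A_n\setminus\bigcup_{k<n}A_k,\qquad Y_n := M^{-1}(B_n).
\]
The $B_n$ form a Borel partition of the set of $\mu\in\pen(T)$ with $h(T,\mu)<h=\sup_n h_n$, which by hypothesis is all of $\pen(T)$. By the defining property of $M$, an ergodic measure $\mu\in\pen(T)$ charges $Y_n$ iff $\mu\in B_n$; the $Y_n$ are therefore pairwise disjoint, invariant Borel subsets whose union is the complement of an almost null set. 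Each restricted system $(Y_n,T|_{Y_n})$ lies in $\mathcal B(h_n,p)$: every ergodic nonatomic measure on $Y_n$ has entropy $<h_n$ by construction, and periodicity is inherited from $(Y,T)$ since the rational spectrum of $\mu$ is unchanged when $\mu$ is viewed as a measure on an invariant Borel subsystem.

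The final step is to invoke strict $(h_n,p)$-universality of $X_n$ to obtain, for each $n$, an almost-Borel embedding $\phi_n\colon Y_n\to X_n$, and then to assemble these into a single almost-Borel embedding $\phi := \bigsqcup_n \phi_n\colon Y\to X$; this witnesses the desired embedding and finishes the proof. The only genuinely delicate point is justifying that the slicing of $Y$ into the invariant Borel pieces $Y_n$ captures exactly the ergodic nonatomic measures of $(Y,T)$ in the right entropy bands, and this is precisely what the Borel measurability of $M$ and of the entropy map, together with the characterizing property $\mu(M^{-1}B)=1\iff \mu\in B$ recalled in Sec.~\ref{sec.borelsyst}, are designed to give. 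Once the decomposition is in place, everything else — preservation of the period $p$, Borel measurability of the assembled map, and the covering of $Y$ up to an almost null set — is routine.
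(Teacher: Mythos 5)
Your entropy-band slicing of $Y$ via $A_n$, $B_n$, $Y_n=M^{-1}(B_n)$ is exactly the right decomposition, and the verifications that each $Y_n$ lands in $\mathcal B(h_n,p)$ (entropy from the slicing, period $p$ inherited by restriction) and that the union of the $X_n$ sits in $\mathcal B(h,p)$ are sound. The gap is in your very first line: you replace the lemma's ``countable union'' by the \emph{disjoint} union $\bigsqcup_n X_n$, whereas the lemma, as it is invoked in the paper, concerns possibly overlapping invariant Borel subsystems of a common ambient system. In the proof of Proposition~\ref{uppartexists}, for instance, the strictly $(t_n,p)$-universal subsystems $W_n\subset X$ with $t_n\nearrow u_S(p)$ need not be pairwise disjoint, and one needs $\bigcup_n W_n$ (not $\bigsqcup_n W_n$) to be strictly $(u_S(p),p)$-universal in order to conclude the same of $X_{Up}$. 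In that overlapping setting your assembled map $\phi=\bigsqcup_n\phi_n$ is no longer automatically injective, because $\phi_n(Y_n)\subset X_n$ and $\phi_m(Y_m)\subset X_m$ can intersect inside the ambient union.

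Closing that gap is not routine. It uses that strictly universal systems are \emph{stable}, i.e.\ each $X_n$ admits an almost-Borel embedding of $X_n\times\{0,1,2,\dots\}$ into itself; this lets one redirect each $\phi_n$ into a fresh copy of $X_n$ that avoids the ranges already used, so that the assembled map is injective. That disjointification is exactly what Lemma~\ref{lem.makedisjoint} formalizes (a countable union of stable subsystems is almost-Borel isomorphic to the corresponding disjoint union), and its proof is a genuinely careful inductive construction, not a one-line observation. With such a disjointification in hand, the overlapping case reduces to the disjoint case you treated and your slicing argument finishes it; without it, the argument is incomplete. The paper itself does not write this out but outsources Lemma~\ref{lem.unionSUp} to Hochman's Proposition~1.4(3), so the disjointification is precisely the hidden content of the ``its proof generalizes'' remark.
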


The following almost-Borel invariant is important for Markov shifts and related systems.

\begin{definition}\label{def.useq}
The \emph{{\rm(}\uep{\,\rm)} universality sequence}  of an almost-Borel system $(X,S)$ is $u_S:\NN\to[0,\infty]$ defined by:
 $$
   u_S(p):=\sup\{t\geq0:(X,S)\text{ contains a strictly $(t,p)$-universal system}\}.
 $$
\end{definition}

\begin{remarks}
 Prop. \ref{p.UnivMark} will show that strictly $(t,p)$-universal systems do exist hence the above invariant is not trivial and can be computed as $u_S(p)=\sup\{t\geq0:(X,S)$ is $(t,p)$-universal$\}$. Also, $u_S(p)$ does not need to be the supremum of the entropies of measures with a period $p$. 
\end{remarks}

Observe that if $q$ divides $p$, $\mathcal B(t,q)\supset\mathcal B(t,p)$ so $(t,q)$-universality implies $(t,p)$-universality. Hence:

\begin{fact} \label{useqremark}
For all $p\in \NN$, $u_S(p) = \max_{q|p} u_S(q)$. 
 \end{fact}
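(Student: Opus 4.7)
The plan is to reduce the claim to proving $u_S(p) \geq u_S(q)$ for every $q$ dividing $p$; the reverse inequality $u_S(p) \leq \max_{q\mid p} u_S(q)$ is immediate upon choosing $q = p$. For each $t < u_S(q)$, I will exhibit a strictly $(t,p)$-universal subsystem of $(X,S)$, which proves $u_S(p) \geq t$ and thus, upon letting $t \uparrow u_S(q)$, yields the desired inequality.

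Starting from a strictly $(t,q)$-universal subsystem $(Y,T) \subset (X,S)$ (guaranteed by the definition of $u_S(q)$), I will apply the Borel periodic decomposition of Theorem \ref{thm:specdec} to $Y$ at level $p$. This provides a Borel partition $\{P_1, \ldots, P_p, P_*\}$ of $Y$, where $P_*$ is $T$-invariant, the $P_i$ are cyclically permuted by $T$, and $\mu(P_*) = 0$ if and only if $p$ is a period of $\mu \in \pen(T)$. I then set $Y_p := Y \setminus P_* = P_1 \cup \cdots \cup P_p$, a $T$-invariant Borel subset. The system $Y_p$ lies in $\mathcal B(t,p)$: the entropy bound is inherited from $Y \in \mathcal B(t,q)$, while the requirement that $p$ be a period for every ergodic nonatomic measure is built into $Y_p$ by the characterization of $P_*$.

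The remaining step is to verify that $Y_p$ almost-Borel embeds every $Z \in \mathcal B(t,p)$. Since $q \mid p$ gives $\mathcal B(t,p) \subset \mathcal B(t,q)$ (as observed just before the Fact), the $(t,q)$-universality of $Y$ supplies an almost-Borel embedding $\phi : Z \hookrightarrow Y$. For any $\mu \in \pen(Z)$, the measure $\mu$ has $p$ as a period, hence so does its pushforward $\phi_*\mu \in \pen(T)$; consequently $\phi_*\mu(P_*) = 0$, which means $\phi^{-1}(P_*)$ is almost null in $Z$. Removing this almost-null set from the domain restricts $\phi$ to an almost-Borel embedding of $Z$ into $Y_p$, so $Y_p$ is strictly $(t,p)$-universal.

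I expect the only point requiring genuine care to be the final restriction step: checking that cutting out $\phi^{-1}(P_*)$ really yields a bona fide almost-Borel embedding into $Y_p$ rather than merely into $Y$. This reduces directly to the period characterization in Theorem \ref{thm:specdec} together with the definition of $\mathcal B(t,p)$, so once the Borel periodic decomposition is in hand the verification is routine; everything else is bookkeeping.
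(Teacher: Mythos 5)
Your proof is correct, but it takes a different route from the paper's. The paper's argument is the one-line observation immediately preceding the Fact: for $q\mid p$, $\mathcal B(t,q)\supset\mathcal B(t,p)$, so $(t,q)$-universality implies $(t,p)$-universality; combined with the Remark's reformulation $u_S(p)=\sup\{t:(X,S)\text{ is }(t,p)\text{-universal}\}$ (which in turn uses Prop.~\ref{p.UnivMark} to supply a strictly $(t,p)$-universal system that can be embedded into a $(t,p)$-universal one), this gives the inequality $u_S(p)\geq u_S(q)$ directly, with no surgery on the subsystem. You instead build a strictly $(t,p)$-universal subsystem by hand: starting from a strictly $(t,q)$-universal $Y$, you apply the Borel periodic decomposition (Thm.~\ref{thm:specdec}) to $Y$ at level $p$, discard $P_*$, and check that the resulting $Y_p$ lies in $\mathcal B(t,p)$ and absorbs every $Z\in\mathcal B(t,p)$ after trimming an almost-null preimage of $P_*$. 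This works---the periods of pushforward measures are preserved, so $\phi^{-1}(P_*)$ is indeed almost null in $Z$---and it has the virtue of not relying on the forward reference to Prop.~\ref{p.UnivMark}: your argument is self-contained within Sections~2--3. The trade-off is that you invoke the (fairly heavy) Borel periodic decomposition for a fact the paper treats as near-immediate, and you do essentially the same measure-theoretic bookkeeping that the proof of Prop.~\ref{uppartexists} carries out in a more general setting a few lines later. Both routes are sound; the paper's is terser because it outsources the existence of strictly universal systems, while yours constructs the needed subsystem explicitly.
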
 

A condition defines a set \emph{up to an almost null set} if the symmetric difference between any two Borel subsets satisfying it, is an almost null set.

\begin{prop}\label{uppartexists} 
A Borel system $(X,S)$ contains, for each $p\in \NN$, a subsystem 
$(X_{Up},S_{Up})$ characterized up to an almost null set by the two following equivalent properties.

\noindent (1) For all $\mu \in \pen (S)$: 
\begin{equation}\label{eq.Xup}
\mu X_{Up}=1 \iff  
p \text{ is a period of }\mu \text{ and }h(S,\mu )< u_S(p) \ . 
\end{equation}

\noindent (2) $(X_{Up},S_{Up})$ is a strictly $p$-universal subsystem and contains any other strictly $p$-universal subsystem of $X$ up to an almost null set.

Moreover, $(X_{Up},S_{Up})$ is strictly $(u_S(p),p)$-universal.
\end{prop}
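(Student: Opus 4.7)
My plan is to define $X_{Up}$ explicitly using the Borel maps available on $X$, then read off property (1) directly from this definition and obtain strict $(u_S(p),p)$-universality by comparison with the family of universal subsystems whose supremum defines $u_S(p)$. Let $M:X\to\Proberg(S)$ be the ergodic decomposition map and $h:\Prob(S)\to[0,\infty]$ the Borel entropy map from Section \ref{sec.borelsyst}, and let $\{P_1,\dots,P_p,P_*\}$ be the Borel partition supplied by Theorem \ref{thm:specdec}, so that for $\mu\in\pen(S)$ one has $\mu(P_*)=0$ iff $p$ is a period of $\mu$. I set
\[
X_{Up}:=\{x\in X\setminus P_*:\,h(M(x))<u_S(p)\},
\]
an $S$-invariant Borel subset of $X$.

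The characterisation (1) now falls out: for any $\mu\in\pen(S)$ we have $M(x)=\mu$ for $\mu$-almost every $x$, so $\mu(X_{Up})=1$ iff $\mu(P_*)=0$ and $h(S,\mu)<u_S(p)$, which by Theorem \ref{thm:specdec} says exactly that $p$ is a period of $\mu$ and $h(S,\mu)<u_S(p)$. In particular $X_{Up}$ lies in $\mathcal B(u_S(p),p)$, so to obtain strict $(u_S(p),p)$-universality it remains to prove the universality. The definition of $u_S(p)$ supplies strictly $(t_n,p)$-universal subsystems $Y_n\subset X$ with $t_n\nearrow u_S(p)$; Lemma \ref{lem.unionSUp} then assembles them into $Y^*:=\bigcup_n Y_n$, which is strictly $(u_S(p),p)$-universal. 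As $Y^*\in\mathcal B(u_S(p),p)$, property (1) places $Y^*$ inside $X_{Up}$ up to an almost null set, so $X_{Up}$ inherits the universality of $Y^*$.

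For property (2) and its equivalence with (1), the maximality clause is handled the same way: any strictly $p$-universal $Y\subset X$ is strictly $(t,p)$-universal for some $t\le u_S(p)$, hence lies in $\mathcal B(t,p)\subset\mathcal B(u_S(p),p)$, and property (1) forces $Y\subset X_{Up}$ up to an almost null set; this establishes (1)$\Rightarrow$(2). Condition (1) obviously pins down $X_{Up}$ up to an almost null set. Conversely, any $X'$ satisfying (2) contains $X_{Up}$ up to an almost null set by its own maximality and is contained in $X_{Up}$ up to an almost null set by the maximality of $X_{Up}$ just proved, so $X'=X_{Up}$ modulo an almost null set and therefore also satisfies (1); this yields both (2)$\Rightarrow$(1) and uniqueness.

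The one genuinely non-formal step is the passage from strictly $(t,p)$-universal subsystems at levels $t<u_S(p)$ to a single strictly $(u_S(p),p)$-universal object inside $X$; this is exactly what Lemma \ref{lem.unionSUp} performs, and it is the place where careful bookkeeping with the period $p$ matters. Once this is granted, the Borel periodic decomposition of Theorem \ref{thm:specdec} is doing the other critical piece of work, by making \lq\lq $p$ is a period of $M(x)$\rq\rq\ into a Borel condition so that the desired set $X_{Up}$ can be carved out as an invariant Borel subset of $X$.
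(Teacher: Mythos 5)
Your proof is correct and follows essentially the same route as the paper's: both build $X_{Up}$ by intersecting the invariant Borel set singled out by the periodic decomposition (Theorem \ref{thm:specdec}) with the set where $h(M(\cdot))<u_S(p)$, read off property (1) directly, and obtain strict $(u_S(p),p)$-universality by applying Lemma \ref{lem.unionSUp} to a sequence of $(t_n,p)$-universal subsystems with $t_n\nearrow u_S(p)$. The only difference is expository: you spell out the intermediate object $Y^*=\bigcup_n Y_n$ and explain why $X_{Up}$ inherits its universality, whereas the paper compresses this into a single citation of Lemma \ref{lem.unionSUp}.
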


\begin{proof}
Conditions (1) and (2) separately imply uniqueness up to an almost null set so
it suffices to build a solution $(X_{Up},S_{Up})$ to  (1) and check that it satisfies also (2) and the last claim.

\Thm\ref{thm:specdec} gives Borel subsystems $C_p$, $p\geq1$, 
such that for any $\mu\in\pen(S)$, 
$\mu(C_p)=1$ if and only if $p$ is a period of $\mu$. 
Recall that the functions $M(\cdot)$ and $h(S,\cdot)$ from Sec.\
\ref{borelsubsec} and Sec.\ \ref{sec:entropy} are Borel.  Hence for any $t\in(0,\infty]$
there is an invariant Borel subset $V^t$ of $X$  
such that, for all $\mu\in\pen(S)$, $\mu(V^t)=1$ if and only if $h(S,\mu)<t$. 
Set $X_{Up}= C_p \cap V^t$ with $t=u_S(p)$. 
Clearly $X_{Up}$ is a solution to (1). 

We turn to condition (2).
First, $(X_{Up},S_{Up})$ is strictly $(u_S(p),p)$-universal by 
Lemma \ref{lem.unionSUp}. Second, if $X'\subset X$ is strictly $p$-universal, then it must be $(t,p)$-universal with $t\leq u_S(p)$. Thus $X'\subset X_{Up}$ up to an almost null set by \eqref{eq.Xup}. (2) and the last claim are satisfied. 
\end{proof}

\subsection{Union-entropy-period universal parts} 

The following class of Borel systems will help us analyze 
not necessarily irreducible Markov shifts.

\begin{definition}
For a sequence $u:\NN\to[0,\infty]$,  $\mathcal C(u)$ denotes the \uep\ class of Borel systems $(X,S)$ such that any $\mu\in\pen(S)$ has some period $p$  such that $h(S,\mu)< u(p)$. A \emph{strictly \UEP-universal system} is a strictly $\mathcal C(u)$-universal system for some $u:\NN\to[0,\infty]$. 
\end{definition}

Considering the subsystems $X_p:=C_p\cap V^{u(p)}$ as in the proof of Proposition \ref{uppartexists} easily yields:

\begin{fact}\label{fact.uep}
For any $u:\NN\to[0,\infty]$,
 $(X,S)\in\mathcal C(u)$ if and only if $X=\bigcup_{p\in\NN} X_p$ with $X_p\in B(u(p),p)$ for all $p\in\NN$.
If $X$ is strictly $\mathcal C(u)$-universal, then each $X_p$ is strictly $(u(p),p)$-universal.
\end{fact}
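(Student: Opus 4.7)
The plan is to base the whole argument on the invariant Borel subsystems $X_p := C_p \cap V^{u(p)}$ already constructed inside the proof of Proposition~\ref{uppartexists}: $C_p$, from Theorem~\ref{thm:specdec}, carries exactly the $\mu \in \pen(S)$ having $p$ as a period, while $V^{u(p)}$ carries exactly those with $h(S,\mu) < u(p)$. By construction each $X_p$ is an invariant Borel subsystem that lies in $\mathcal{B}(u(p),p)$.

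First I would verify the equivalence. For the easy direction, if $X = \bigcup_p X_p$ with each $X_p \in \mathcal{B}(u(p),p)$ invariant, then any $\mu \in \pen(S)$ satisfies $\mu(X_p) = 1$ for some $p$ by ergodicity and invariance; viewed on $X_p$, this $\mu$ has $p$ as a period and $h(S,\mu) < u(p)$, so $(X,S) \in \mathcal{C}(u)$. Conversely, if $(X,S) \in \mathcal{C}(u)$, then every $\mu \in \pen(S)$ admits a period $p$ with $h(S,\mu) < u(p)$, which gives $\mu(C_p) = \mu(V^{u(p)}) = 1$ and hence $\mu(X_p) = 1$. Therefore $X \setminus \bigcup_p X_p$ is an almost null set, and absorbing it into, say, $X_1$ (which does not alter $X_1 \in \mathcal{B}(u(1),1)$, since it adds no new ergodic nonatomic measures) yields the required decomposition.

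For the strict-universality assertion, I would assume that $(X,S)$ is strictly $\mathcal{C}(u)$-universal. Each $X_p$ already belongs to $\mathcal{B}(u(p),p)$, so it suffices to prove it is $(u(p),p)$-universal. Given an arbitrary $Y \in \mathcal{B}(u(p),p)$, every $\nu \in \pen(Y)$ has $p$ as a period with $h(T,\nu) < u(p)$; witnessing the defining condition of $\mathcal{C}(u)$ with this same $p$ shows $Y \in \mathcal{C}(u)$, so by universality there is an almost-Borel embedding $\psi : Y \hookrightarrow X$. The pushforward $\psi_*\nu$ still has $p$ as a period and entropy $< u(p)$, so $\psi_*\nu(X_p) = 1$ and thus $\nu\bigl(\psi^{-1}(X \setminus X_p)\bigr) = 0$ for every $\nu \in \pen(Y)$; discarding this almost null subset of $Y$ turns $\psi$ into an almost-Borel embedding of $Y$ into $X_p$.

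The argument is essentially bookkeeping once the subsystems $X_p$ are in hand. The one step that needs slight care is the last observation: almost-Borel embeddings from a $\mathcal{B}(u(p),p)$-system into a strictly $\mathcal{C}(u)$-universal system are automatically forced, up to an almost null set, into the specific component $X_p$, because $X_p$ is precisely cut out by the two measure-theoretic constraints that define $\mathcal{B}(u(p),p)$. Everything else follows directly from the Borel machinery already assembled in Proposition~\ref{uppartexists}.
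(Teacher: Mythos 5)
Your proof is correct and takes the same route the paper indicates: the paper simply asserts that considering the subsystems $X_p := C_p \cap V^{u(p)}$ "easily yields" the fact, and you have supplied exactly the bookkeeping that this one-liner omits. In particular your two non-trivial observations are exactly right: (i) an ergodic measure on a countable union of invariant sets must give full measure to one of them, so that an almost null remainder may be absorbed into $X_1$ without leaving $\mathcal{B}(u(1),1)$, since almost null sets carry no ergodic nonatomic measures; and (ii) an almost-Borel embedding of a $\mathcal{B}(u(p),p)$-system into $X$ is automatically forced, modulo an almost null set, into $X_p$, because $C_p$ and $V^{u(p)}$ pin down precisely the two measure-theoretic constraints defining $\mathcal{B}(u(p),p)$.
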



An arbitrary Borel system $(X,S)$  contains a \lq maximal\rq\ strictly \UEP-universal subsystem:

\begin{theorem} \label{univpartexists}
For any Borel system $(X,S)$ satisfying:
  \begin{equation}\label{eq.fec}
    \forall \mu\in\pen(S)\; h(S,\mu)<\infty,
 \end{equation}
there is a subsystem $(X_U,S_U)$ characterized up to an almost null set by each of the following three equivalent properties.\\
\noindent (1)  $X_U=\bigcup_{p\in\NN} X_{Up}$ up to an almost null set.

\noindent (2)  For all  $\mu \in \pen (S)$,  
\begin{equation}\label{eq.uni}
\mu X_{U}=1 \iff  
\mu \text{ has a period }p\text{ s.t. }
h(S,\mu )< u_S(p) \ .
\end{equation}

\noindent (3) $(X_U,S_U)$ is a strictly \UEP-universal subsystem that contains any strictly \UEP-universal subsystem up to an almost null set.

Moreover, $(X_U,S_U)$ is strictly $\mathcal C(u_S)$-universal and its universality sequence coincides with $u_S$.
\end{theorem}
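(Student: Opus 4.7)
The plan is to set $X_U := \bigcup_{p\in\NN} X_{Up}$, a countable union of the invariant Borel subsystems provided by \Prop\ref{uppartexists}, and then to check (1)--(3) together with the final claim in order. The equivalence (1)$\Leftrightarrow$(2) is immediate from the characterization \eqref{eq.Xup} of $X_{Up}$: for any $\mu\in\pen(S)$, each $X_{Up}$ is invariant Borel and so has $\mu$-measure $0$ or $1$, hence $\mu(\bigcup_p X_{Up})=1$ if and only if $\mu(X_{Up})=1$ for some $p$ (the all-zero case being excluded by countable subadditivity), which by \eqref{eq.Xup} means exactly that $\mu$ admits some period $p$ with $h(S,\mu)<u_S(p)$.

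The heart of the argument is strict $\mathcal{C}(u_S)$-universality of $X_U$. Membership $X_U\in\mathcal{C}(u_S)$ follows from (2). For universality I would introduce the Borel function
\[
N(x) := \min\bigl\{p\in\NN : p\text{ is a period of }M(x)\text{ and }h(S,M(x))<u_S(p)\bigr\},
\]
defined on the invariant Borel set where this minimum exists (in particular on $X_U$ and on every $Y\in\mathcal{C}(u_S)$), with Borelness of $N$ coming from the Borel periodic decomposition \Thm\ref{thm:specdec} (which yields a Borel test for each individual period) together with the Borelness of the Kolmogorov--Sinai entropy map on $\Prob(S)$. Given $Y\in\mathcal{C}(u_S)$, put $Y_p:=\{y\in Y:N(y)=p\}$ to obtain a disjoint decomposition $Y=\bigsqcup_p Y_p$ with each $Y_p\in\mathcal{B}(u_S(p),p)$; by \Prop\ref{uppartexists} each $Y_p$ admits an almost-Borel embedding $\psi_p$ into $X_{Up}$. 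The a priori obstacle is that the subsystems $X_{Up}$ are not pairwise disjoint in $X$, so the $\psi_p$'s cannot simply be glued; the resolving observation, and the technical point of introducing $N$, is that almost-Borel embeddings push measures forward, so each image $\psi_p(Y_p)$ automatically lands in the genuinely disjoint set $\widetilde X_p := \{x\in X_U : N(x)=p\}$. Hence $\psi:=\bigsqcup_p\psi_p$ is injective and gives an almost-Borel embedding $Y\hookrightarrow X_U$.

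The universality-sequence claim then follows easily: $X_{Up}\subset X_U$ is strictly $(u_S(p),p)$-universal by \Prop\ref{uppartexists}, giving $u_{S_U}(p)\geq u_S(p)$, while conversely any strictly $(t,p)$-universal subsystem of $X_U$ is also such a subsystem of $X$, forcing $t\leq u_S(p)$. For (1)$\Rightarrow$(3), strict $\mathcal{C}(u_S)$-universality has just been established, and the maximality part follows by writing any strictly $\mathcal{C}(v)$-universal subsystem $Y\subset X$ as $Y=\bigcup_p Y_p$ with $Y_p$ strictly $(v(p),p)$-universal inside $X$ (\Fact\ref{fact.uep}), which forces $v(p)\leq u_S(p)$ and hence $Y\subset X_U$ up to an almost null set via \eqref{eq.uni}. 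The converse (3)$\Rightarrow$(1) is then automatic, since any two subsystems satisfying (3) must contain each other up to an almost null set and hence coincide up to one.
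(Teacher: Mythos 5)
Your proof is correct, and it follows the paper's overall structure in setting $X_U:=\bigcup_p X_{Up}$, checking $(1)\Leftrightarrow(2)$ from \eqref{eq.Xup}, establishing strict $\mathcal C(u_S)$-universality, and then deducing the remaining claims. The genuine difference lies in how you handle the lack of pairwise disjointness among the $X_{Up}$'s. The paper decomposes $Y=\bigsqcup_p Z_p$ with $Z_p:=Y_p\setminus\bigcup_{q<p}Y_q$ (effectively the same minimal-period decomposition you build with $N$), embeds each $Z_p$ into $X_{Up}$, and then invokes Lemma \ref{lem.makedisjoint} to \lq\lq assume\rq\rq\ the $X_{Up}$'s are pairwise disjoint, replacing $X_U$ by an almost-Borel isomorphic copy $\bigsqcup_p X_{Up}$. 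You bypass that lemma entirely by observing that any almost-Borel embedding $\psi_p$ intertwines the dynamics and hence pushes ergodic measures forward isomorphically, so, up to an almost null set, $\psi_p(Y_p)$ lies in the level set $\widetilde X_p=\{x\in X_U:N(x)=p\}$; these level sets are genuinely pairwise disjoint, so the $\psi_p$'s glue to an injection with no external machinery. This is cleaner for the purpose at hand. (It does not render Lemma \ref{lem.makedisjoint} superfluous elsewhere in the paper -- e.g.\ in the proof of Theorem \ref{thm.MarkovChar} -- but as an argument local to this theorem it is leaner.) One phrasing to tighten: \lq\lq$\psi_p(Y_p)$ automatically lands in $\widetilde X_p$\rq\rq\ should read \lq\lq lands in $\widetilde X_p$ up to an almost null set,\rq\rq\ since the containment $M(\psi_p(y))=(\psi_p)_*M(y)$ and hence $N(\psi_p(y))=N(y)$ only holds on the complement of an almost null set; the resulting injectivity of $\bigsqcup_p\psi_p$ is therefore also only up to an almost null set, which is of course all that an almost-Borel embedding requires.
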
 

\begin{definition}\label{def.ueppart}
The subsystem $(X_U,S_U)$ above is called the (\uep) \emph{universal part}  of $(X,S)$.  
\end{definition}

The following are easy consequences of universality.

\begin{corollary} \label{useqtheorem}
Suppose $(X,S)$ and $(Y,T)$ are Borel systems.  Then 
\begin{enumerate} 
\item 
There is an almost-Borel embedding $(X_U,S_U)\to (Y_U,T_U)$ 
if and only if $u_S\leq u_T$. 
\item 
$(X_U,S_U)$ and $(Y_U,T_U)$ are almost-Borel isomorphic 
if and only if $u_S = u_T$. 
\item \label{absorbing}
Suppose for all $\mu\in \pen(X)$ there is a period  $p$ 
of $\mu$ such that $h(S, \mu)<u_T(p)$. 
Then the systems $(X,S) \cup (Y,T)$, $(X,S) \sqcup (Y,T)$,  and $(Y,T)$ are almost-Borel 
isomorphic.
\end{enumerate} 
\end{corollary}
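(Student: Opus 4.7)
The plan is to reduce everything to the universal properties of $X_U$ and $Y_U$ from \Thm\ref{univpartexists} together with the almost-Borel Schr\"oder-Bernstein theorem (\Prop\ref{lem:HCB}).

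For (1), I would argue each direction by universality. ($\Rightarrow$) An almost-Borel embedding $(X_U,S_U)\to(Y_U,T_U)$ transports each subsystem $X_{Up}$ onto an almost-Borel isomorphic copy inside $Y_U$. Since $X_{Up}$ is strictly $(u_S(p),p)$-universal by \Prop\ref{uppartexists}, $Y_U$ then contains a strictly $(u_S(p),p)$-universal subsystem, which by definition of the universality sequence forces $u_T(p)\geq u_S(p)$. ($\Leftarrow$) The pointwise inequality $u_S\leq u_T$ gives $\mathcal C(u_S)\subseteq\mathcal C(u_T)$: any ergodic nonatomic measure with a period $p$ satisfying $h<u_S(p)$ satisfies $h<u_T(p)$ as well. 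Since $(X_U,S_U)\in\mathcal C(u_S)$, strict $\mathcal C(u_T)$-universality of $(Y_U,T_U)$ (\Thm\ref{univpartexists}) produces the required embedding. Part (2) then follows immediately from (1) by applying \Prop\ref{lem:HCB} to embeddings in both directions.

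For (3), the hypothesis says exactly $(X,S)\in\mathcal C(u_T)$. Write $Y=Y_U\sqcup(Y\setminus Y_U)$ up to an almost null set. The disjoint union $(X,S)\sqcup Y_U$ still lies in $\mathcal C(u_T)$ (measures from $X$ satisfy the condition by hypothesis, those from $Y_U$ by construction), so strict $\mathcal C(u_T)$-universality of $Y_U$ yields an almost-Borel embedding $(X,S)\sqcup Y_U\hookrightarrow Y_U$. Extending by the identity on $Y\setminus Y_U$ gives
\[
(X,S)\sqcup(Y,T) \;=\; (X,S)\sqcup Y_U\sqcup(Y\setminus Y_U) \;\hookrightarrow\; Y_U\sqcup(Y\setminus Y_U) \;=\; (Y,T).
\]
Combined with the trivial $(Y,T)\hookrightarrow(X,S)\sqcup(Y,T)$, \Prop\ref{lem:HCB} yields the isomorphism $(X,S)\sqcup(Y,T)$ with $(Y,T)$. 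For $(X,S)\cup(Y,T)$, realized in any ambient Borel system containing both as subsystems, the sandwich $(Y,T)\hookrightarrow(X,S)\cup(Y,T)\hookrightarrow(X,S)\sqcup(Y,T)$ between two almost-Borel isomorphic systems, together with \Prop\ref{lem:HCB}, delivers the same conclusion.

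The only real obstacle is the bookkeeping in part (3): one must ensure that the embedding of $(X,S)\sqcup Y_U$ into $Y_U$ can be combined with the identity on $Y\setminus Y_U$ to yield a Borel injection. Working throughout modulo almost null sets renders the relevant images automatically disjoint, after which the splicings are routine. Apart from this, each step is a direct invocation of universality or of \Prop\ref{lem:HCB}.
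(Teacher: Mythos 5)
Your proof is correct and spells out precisely the argument the paper leaves implicit (it labels these ``easy consequences of universality'' without giving details): each implication is the expected appeal to strict $\mathcal C(u)$-universality of the universal parts from \Thm\ref{univpartexists} together with the almost-Borel Schr\"oder--Bernstein theorem \Prop\ref{lem:HCB}. The only point worth stressing, which you handle, is that in (3) the sandwich $(Y,T)\hookrightarrow(X,S)\cup(Y,T)\hookrightarrow(X,S)\sqcup(Y,T)$ works for any ambient realization of the union, since $(X\cup Y)$ always injects into $X\sqcup Y$ by sending $X\setminus Y$ to the $X$-copy and $Y$ to the $Y$-copy.
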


The proof of \Thm\ref{univpartexists} relies on the following
lemma, whose proof we defer to the end of the section. Say that a
Borel system $(X,S)$ is \emph{stable} if there is an almost-Borel
embedding of $(X\times\{0,1,\dots\},S\times\id)$ into $(X,S)$. Note that the
strictly universal systems with respect to $\mathcal B(t)$, $\mathcal
B(t,p)$, or $\mathcal C(u)$, are stable. Moreover, countable 
unions of stable systems are stable.

\begin{lemma}\label{lem.makedisjoint}
A countable union $\bigcup_{n\geq0} X_n$ of stable subsystems 
is almost-Borel isomorphic to the corresponding disjoint union $\bigsqcup_{n\geq0} X_n$.
\end{lemma}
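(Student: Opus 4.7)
The plan is to apply the almost-Borel Cantor--Bernstein theorem (Proposition \ref{lem:HCB}): it suffices to exhibit almost-Borel embeddings $W\hookrightarrow Z$ and $Z\hookrightarrow W$, where $W:=\bigcup_{n\geq 0}X_n$ and $Z:=\bigsqcup_{n\geq 0}X_n$.

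The embedding $W\hookrightarrow Z$ is immediate. Define $n(x):=\min\{k:x\in X_k\}$; the level set $\{n(\cdot)=k\}=X_k\setminus\bigcup_{j<k}X_j$ is invariant Borel, so $n(\cdot)$ is a Borel invariant function and $x\mapsto(x,n(x))$ is a Borel equivariant injection from $W$ into $Z$.

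The main work is $Z\hookrightarrow W$. I plan to build by induction on $n$ pairwise disjoint invariant Borel subsets $B_n\subseteq W$, each almost-Borel isomorphic to $X_n$; the map sending the $n$-th summand of $Z$ via the isomorphism $X_n\cong B_n\subset W$ is then the desired embedding. To create room at each stage I use stability of $X_n$ in the form of an almost-Borel decomposition $X_n=\bigsqcup_{k\geq 0}X_n^{[k]}$ with each $X_n^{[k]}\cong X_n$. The induction hypothesis is strengthened: after stage $n$, for each $m>n$ the set $X_m\setminus(B_0\sqcup\cdots\sqcup B_n)$ still contains an invariant Borel ``reserve'' almost-Borel isomorphic to $X_m$. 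The inductive step picks $B_{n+1}$ as a sufficiently thin piece of the stability decomposition of $X_{n+1}$ sitting inside that reserve, and uses the doubling property $X_m\cong X_m\sqcup X_m$ (itself a consequence of stability) to restore a reserve inside each $X_m$ for $m>n+1$ after removing $B_{n+1}$.

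The main obstacle is the bookkeeping needed to keep the reserves alive simultaneously for all future indices, which is delicate because the $X_n$ may overlap arbitrarily: a piece $B_{n+1}\subset X_{n+1}$ can also sit inside $X_m$ for other $m$ and erode the reserve there. An equivalent but perhaps cleaner route is to first observe that $Z$ is stable (by applying stability of each $X_n$ summand by summand) and that $Z\hookrightarrow W\times\{0,1,\dots\}$ via $(x,n)\mapsto(x,n)$; combined with the easy $W\hookrightarrow Z$, this gives $Z\cong W\times\{0,1,\dots\}$ by Proposition \ref{lem:HCB}, so the lemma becomes equivalent to showing $W$ is stable, i.e., $W\times\{0,1,\dots\}\hookrightarrow W$.
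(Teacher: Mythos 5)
Your reduction of the lemma to the statement \emph{``$W=\bigcup_n X_n$ is stable''} is a correct and clean observation: $Z=\bigsqcup_n X_n$ is visibly stable, $Z\hookrightarrow W\times\{0,1,\dots\}$, $W\hookrightarrow Z$, hence $Z\cong W\times\{0,1,\dots\}$ by Proposition~\ref{lem:HCB}, and then $W\cong Z$ if and only if $W$ is stable. But this reformulation only relocates the difficulty; it is not a proof, and neither is your first plan in its present form.

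The gap is the ``restore a reserve'' step. You pick $B_{n+1}\cong X_{n+1}$ inside a reserve $R_{n+1}^{(n)}\subset X_{n+1}$, and you then need, for each $m>n+1$, a new reserve inside $R_m^{(n)}\setminus B_{n+1}$ almost-Borel isomorphic to $X_m$. You invoke $X_m\cong X_m\sqcup X_m$, but this gives no control on how $B_{n+1}\cap R_m^{(n)}$ sits relative to any fixed stability decomposition of $R_m^{(n)}$: since $X_{n+1}$ and $X_m$ overlap arbitrarily and their stability embeddings are independent, $B_{n+1}$ can meet every piece $R_{m,k}^{(n)}$ of the decomposition, and ``thinness'' of $B_{n+1}$ is not a meaningful notion here (there is no reference measure). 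Nothing in the doubling property guarantees that a copy of $X_m$ survives. This is precisely the bookkeeping you flagged as delicate, and as written the inductive step does not go through.

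The paper sidesteps the reserve-maintenance problem entirely. It fixes the canonical partition $\tilde X_k := X_k\setminus(X_0\cup\dots\cup X_{k-1})$ of $W$, so that $X_n=\bigsqcup_{k\leq n}(\tilde X_k\cap X_n)$ up to the decomposition, and it assembles a copy $Z_n\cong X_n$ as $\tilde X_n$ together with the $\phi_k$-images (at ``channel $n$'', i.e.\ $\phi_k(\cdot\times\{n\})$) of the overlaps $\tilde X_k\cap X_n$ for $k<n$. The embedding is then $\Psi(x,n)=\phi_n(x,n)$, and disjointness of the images is enforced by the invariant in condition (4), which constrains $Z_n$ away from $\phi_\ell(X_\ell\times\{\ell,n+1,\dots\})$. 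In other words, rather than trying to carve out disjoint copies $B_n\subset W$ in advance, the paper reroutes the overlapping parts of $X_n$ through the stability maps $\phi_k$ of the earlier $X_k$, using the ``channel'' index to keep everything disjoint. To repair your argument along your own lines, you would need to make the reserves for all $m$ depend coherently on a single fixed partition of $W$ (such as $\{\tilde X_k\}$) and on fixed stability embeddings chosen once and for all, which essentially reproduces the paper's construction.
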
 

\begin{proof}[Proof of \Thm\ref{univpartexists}]
Each of the conditions (1), (2) and (3) implies uni\-que\-ness up to an almost null set. It suffices to show that $X_U$ as in condition (1) with $S_U:=S|X_U$ satisfies the two other claims.
For Claim (2) this follows from Condition (1) of Prop.\ \ref{uppartexists}.

To prove the universality stated in Claim (3), let $(Y,T)\in\mathcal C(u_S)$. By Fact \ref{fact.uep},  $Y=\bigcup_{p\in\NN} Y_p$ with  $Y_p\in\mathcal B(u_S(p),p)$ and $Z_p:=Y_p\setminus\bigcup_{q<p}Y_q$, $p\in\NN$, is a partition. By Prop.\ \ref{uppartexists}, each $X_{Up}$ is strictly $(u_S(p),p)$-universal so there is an almost-Borel embedding of $Z_p\subset Y_p$ into $X_{Up}$ for all $p\geq1$. Now, Lemma \ref{lem.makedisjoint} lets us assume that $X_U=\bigcup_{p\in\NN} X_{Up}$ is a partition, proving $\mathcal C(u_S)$-universality. It is strict since $(X_U,S_U)\in\mathcal C(u_S)$ by Claim (2).

For the second half of (3), let $(Y,T)$ be a strictly $\mathcal C(v)$-universal subsystem of $(X,S)$ for some $v:\NN\to[0,\infty]$. Fact \ref{fact.uep} implies $Y=\bigcup_{p\in\NN} Y_p$ and $v\leq u_S$. By Prop.\ \ref{uppartexists}, $Y_p\subset X_{Up}\cup N_p$ for some almost null $N_p$: $Y\subset X_U\cup\bigcup_{p\in\NN} N_p$ and Claim (3) follows.

Finally, let $u_U$ be the universality sequence of $(X_U,S_U)$. As $X_U\subset X$, $u_U\leq u_S$. The converse inequality follows from the strict universality of each $X_{Up}$.
\end{proof}

\begin{proof}[Proof of Lemma \ref{lem.makedisjoint}] 
It suffices to build an almost Borel embedding $\Psi:\bigcup_{n\geq0} X_n\times\{n\}\hookrightarrow\bigcup_{n\geq1} X_n$ (the reverse embedding is obvious and the lemma then follows from Prop.\ \ref{lem:HCB}).  
We claim that there exist subsystems $Z_0,Z_1,\dots$ such that:
 \begin{enumerate}
  \item each set $Z_n\subset X_0\cup\dots\cup X_n$ is almost Borel isomorphic to $X_n$;
  \item $\phi_n$ is an almost-Borel embedding of $X_n\times\{0,1,\dots\}$ into $Z_n$;
  \item the sets $\phi_\ell(X_\ell\times\{\ell\})$, $0\leq \ell<n$, are pairwise disjoint.
  \item $Z_n\cap\phi_\ell(X_\ell\times\{\ell,n+1,n+2,\dots\})=\emptyset$ for $0\leq \ell<n$.
\end{enumerate}    
Then,  $\Psi:\bigcup_{n\geq0} X_n\times\{n\}\hookrightarrow \bigcup_{n\geq0} X_n$ defined by $\Psi(x,n)=\phi_n(x,n)$ proves the lemma.

We proceed by induction. To begin with, let $\phi_0:X_0\times\{0,1,\dots\}\hookrightarrow Z_0:= X_0$ be given by the stability assumption. Properties $(1)_0,(2)_0,(3)_0,(4)_0$ (i.e., (1),\dots,(4) for $n$ taking the value $0$) are satisfied. 

For $n\geq1$, we assume $(1)_m,(2)_m,(3)_m,(4)_m$ for $0\leq m<n$ and, letting $\tilde X_k:=X_k\setminus(X_0\cup\dots\cup X_{k-1})$, we set:
 \begin{equation} \label{eq.Zn}
       Z_n:=\tilde X_n\cup \bigcup_{k=0}^{n-1}\phi_k((\tilde X_k\cap X_n)\times\{n\}).
  \end{equation}
First note that, using $(1)_k$ for $k<n$, $Z_n\subset \tilde X_n\cup\bigcup_{k<n} X_k\subset \bigcup_{k\leq n} X_k$.
Second we check that the union in \eqref{eq.Zn} is disjoint. Note, $\tilde X_n\cap\phi_k(X_k\times\{0,1,\dots\})\subset\tilde X_n\cap (X_0\cup\dots\cup X_k)=\emptyset$ for $0\leq k<n$. So it is enough to note that for all $0\leq \ell<k<n$, $(4)_{k}$ yields:
 $$
   \phi_\ell((\tilde X_\ell\cap X_n)\times\{n\})\cap \phi_k((\tilde X_k\cap X_n)\times\{n\}) \subset \phi_\ell(X_\ell\times\{k+(n-k)\})\cap Z_k = \emptyset.
 $$
The disjointness in \eqref{eq.Zn} implies that $Z_n$ is isomorphic to $X_n$ so $(1)_n$ holds. Moreover, the stability assumption gives $\phi_n$ as in condition $(2)_n$. 

We prove $(4)_n$ for $0\leq \ell<n$. We use \eqref{eq.Zn} to expand $Z_n$. As before $\tilde X_n\cap Z_\ell=\emptyset$ so we need only to show that, for $0\leq k<n$:
 \begin{equation}\label{eq.disj0}
    \phi_k(X_n\times\{n\})\cap \phi_\ell(X_\ell\times\{\ell,n+1,n+2,\dots\})=\emptyset.
  \end{equation}
If $\ell=k$, \eqref{eq.disj0} follows from the injectivity of $\phi_k$. If  $\ell<k$, it follows from $(4)_k$ as $\phi_k(X_k\times\{0,1,\dots\})\subset Z_k$ and $\{\ell,k+1,k+2,\dots\}\supset\{\ell,n+1,n+2,\dots\}$. If $k<\ell$, it follows from $(4)_\ell$ using $\phi_\ell(X_\ell\times\{0,1,\dots\})\subset Z_\ell$ and $n\geq\ell+1$.

 \eqref{eq.disj0} and therefore condition $(4)_n$ are established. 
Eq.\ \eqref{eq.disj0} also implies condition $(3)_n$, completing the inductive step. 
\end{proof}

\section{Finite entropy Markov shifts} \label{sec:markov}

In this section, we prove Theorems \ref{mainthm.classif} and \ref{mainthm.markovpart} as well as Corollary \ref{cor.manySFTs} by relating the universal parts studied in Sec.\ \ref{sec.univ} to Markov shifts using the work of Hochman \cite{Hochman}.
%
%

\subsection{Markov shifts and universality}
As in 
\cite{Hochman}, for $h\geq 0$ the {\it $h$-slice of } $(X,S)$ is  
 a Borel subsystem which, for $\mu \in \pen(X)$, has $\mu$ 
measure 1 if and only if $h(S, \mu )>h$. ``The'' $h$-slice subsystem  
is unique up to an almost null set.
Note that the $0$-slice is an almost null set and that
a system $(X,S)$ with no measure of maximum 
entropy,  is equal to its $h(S)$-slice up to an almost null set. 
We recall the main result of \cite{Hochman}:

\begin{theorem}[Hochman \cite{Hochman}]\label{thm.Hochman}
Let $0\leq t< h$. Any  mixing SFT $X$ with entropy $h$ is $h$-universal.
In particular,  the \emph{$t$-slice}  of $X$ is strictly $t$-universal. 
\end{theorem}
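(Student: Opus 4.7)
The plan is to prove that $X$ is $h$-universal and then deduce the ``in particular'' statement as a formal corollary. For the latter: the $t$-slice of $X$ lies in $\mathcal{B}(t)$ by definition, and any almost-Borel embedding of a system in $\mathcal{B}(t)$ into $X$ must, up to an almost null set, land inside the $t$-slice (since the pushed-forward measures still have entropy $<t$, and so are orthogonal to the complement). Strict $t$-universality of the $t$-slice therefore reduces to $h$-universality of $X$.

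The central task is: given a Borel system $(Y,T)$ with $h(T,\mu)<h$ for every $\mu\in\pen(T)$, produce an almost-Borel embedding $Y\hookrightarrow X$. My approach combines two classical ingredients, each upgraded to the almost-Borel category. First, a \emph{Borel refinement of Krieger's generator theorem}: decompose $Y$ into countably many invariant Borel sets $Y_n$ on which a uniform entropy bound $h(T,\mu)<h_n<h$ holds for every ergodic $\mu$ with $\mu(Y_n)=1$. This is possible because both the entropy map $\mu\mapsto h(T,\mu)$ and the ergodic decomposition $M:Y\to\pen(T)$ are Borel (Sec.~\ref{sec.borelsyst}), so one can form the Borel set $\{M(x)\in\pen(T):h(T,M(x))\in[h_n-\epsilon,h_n)\}$ and saturate. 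On each $Y_n$, a Borel Rokhlin-tower construction produces a finite Borel partition that is generating for every ergodic measure simultaneously, yielding a Borel coding $\phi_n:Y_n\to\Sigma_n$ onto a subshift of topological entropy $<h$, injective outside an almost null set.

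Second, Krieger's embedding theorem: any subshift of entropy strictly less than $h_\topo(X)=h$ embeds topologically into the mixing SFT $X$, provided a periodic-orbit counting condition is satisfied. The entropy gap is guaranteed by construction; the periodic-orbit condition can be forced either by inserting sparse markers into the $\phi_n$-coding that destroy all short periods, or simply by removing periodic orbits, which are atomic and so invisible in the almost-Borel category. Composing $\phi_n$ with Krieger's embedding gives almost-Borel embeddings $Y_n\hookrightarrow X$. Since $X$ is stable (it is topologically conjugate to $X\sqcup X$ by splitting along a cylinder), Lemma~\ref{lem.makedisjoint} lets us assemble the pieces into a single almost-Borel embedding $Y\hookrightarrow X$.

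The main obstacle is the first stage: producing a genuinely Borel, not merely measure-by-measure, version of Krieger's generator theorem. The classical proof is inherently tied to one ergodic measure at a time, and carrying it out uniformly in the Borel category requires a delicate Borel selection of Rokhlin towers whose heights and bases are simultaneously good for all ergodic measures of sub-$h$ entropy. The Borel structure of $\pen(T)$, the Borel entropy function, and Kuratowski--Ryll-Nardzewski style selection are the tools that make this possible; this uniform Borel Krieger theorem is in fact the main technical innovation behind Hochman's result.
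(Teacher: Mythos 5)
This theorem is quoted from Hochman \cite{Hochman}; the present paper treats it as imported background and supplies no proof, so there is nothing internal to measure your sketch against. Your outline is nonetheless a fair reconstruction of Hochman's strategy: partition $Y$ into invariant Borel pieces with uniform entropy gaps below $h$, code each piece by a Borel generator theorem into a low-entropy full shift, embed that full shift topologically into $X$ via Krieger's embedding theorem, and glue the pieces. Your reduction of the \lq\lq in particular\rq\rq\ clause is also correct. (Be aware that the displayed definition of the $h$-slice at the start of Section~\ref{sec:markov} has the inequality reversed: it reads $h(S,\mu)>h$ where the intended condition, confirmed by the surrounding remarks on the $0$-slice and the $h(S)$-slice and by its use in Lemma~\ref{l.analysis}, is $h(S,\mu)<h$. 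You silently adopted the correct convention.)

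Two points to tighten, one cosmetic and one substantive. Cosmetically, the target of $\phi_n$ should be a full shift on $k_n$ symbols with $\log k_n<h$, not an abstractly described subshift $\Sigma_n$: the Borel image of $\phi_n$ need not be closed, but the full $k_n$-shift itself has topological entropy $\log k_n<h_\topo(X)$ and embeds topologically into the mixing SFT $X$ by Krieger's embedding theorem once the periodic-point obstructions are disposed of (as you note, those orbits are atomic and hence almost-Borel negligible). Substantively, everything hangs on the uniform Borel generator theorem that you describe but do not prove: a single Borel partition of $Y_n$ into $k_n$ parts whose induced coding into the $k_n$-shift is injective off an almost null set, simultaneously for every nonatomic ergodic measure of $Y_n$, with the coded images of distinct ergodic measures essentially disjoint. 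That simultaneous control over all of $\pen(Y_n)$, rather than one ergodic measure at a time, is precisely Hochman's main technical theorem and is where nearly all the work in \cite{Hochman} resides. Your proposal is an accurate roadmap to that argument, not a self-contained replacement for it.
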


\begin{proposition}\label{p.UnivMark} 
For $p\in \NN$  and $h\in[0,\infty]$,  
the following systems are strictly $(h,p)$-universal (and therefore isomorphic).
 \begin{enumerate}
\item
$h$-slices of irreducible period $p$, entropy $h$ Markov shifts. 
 \item
Irreducible Markov shifts with period $p$ and entropy $h$ 
with no measure of maximal entropy (which exist exactly when
$h<\infty$). 
 \item Countable unions of period $p$ irreducible Markov shifts
with   entropies strictly less than $h$ and with supremum equal to
$h$. 
\end{enumerate}
\end{proposition}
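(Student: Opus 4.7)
The plan is to prove each of types (1), (2), (3) is strictly $(h,p)$-universal; Proposition \ref{lem:HCB} then forces them to be almost-Borel isomorphic. Membership in $\mathcal B(h,p)$ is immediate: any irreducible period-$p$ Markov shift has a canonical \emph{Borel} cyclic decomposition $X=X^{(0)}\sqcup\cdots\sqcup X^{(p-1)}$ coming from the defining graph (not merely modulo a measure), so $p$ is a period of every ergodic measure; the bound $h(S,\mu)<h$ is forced respectively by the definition of the $h$-slice, by the absence of an \mme, and by the componentwise bounds $h_n<h$ with $\sup h_n=h$.

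The central reduction exploits the cyclic decomposition. On $X^{(0)}$ the restricted iterate $S^p$ is a mixing Markov shift of entropy $ph$, and $\mu\mapsto p\cdot\mu|_{X^{(0)}}$ is a bijection between $S$-ergodic measures on $X$ and $S^p$-ergodic measures on $X^{(0)}$, multiplying entropy by $p$. Applied to any $(Y,T)\in\mathcal B(h,p)$, \Thm\ref{thm:specdec} produces a Borel cyclic partition $Y=Y^{(0)}\sqcup\cdots\sqcup Y^{(p-1)}$ off an almost null set (because every $\mu\in\pen(T)$ has $p$ as a period), and then $(Y^{(0)},T^p)\in\mathcal B(ph,1)$. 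Any almost-Borel $T^p$-equivariant embedding $\phi_0:Y^{(0)}\hookrightarrow X^{(0)}$ extends cyclically via $\phi(T^iy):=S^i\phi_0(y)$ on each $Y^{(i)}$ to a $T$-to-$S$ equivariant almost-Borel embedding of $Y$ into $\bigsqcup_{i=0}^{p-1}S^i\phi_0(Y^{(0)})\subset X$; injectivity is forced by the disjointness of cyclic pieces, and equivariance across the \lq wrap\rq\ $i=p-1\to 0$ uses $T^p$-equivariance of $\phi_0$.

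It therefore suffices to produce, inside $X^{(0)}$, a strictly $(ph,1)$-universal $S^p$-invariant subset consisting of measures of entropy strictly below $ph$. Exhaust $(X^{(0)},S^p)$ from within by mixing SFTs $W_n\subset X^{(0)}$ with $h_\topo(W_n)\to ph$ (finite strongly connected subgraphs of the presenting graph, chosen to contain loops of coprime lengths so as to remain mixing, and enlarged to capture more and more entropy), pick $t_n<h_\topo(W_n)$ with $\sup t_n=ph$, and let $W'_n$ be the $t_n$-slice of $W_n$, which is strictly $(t_n,1)$-universal under $S^p$ by Hochman's \Thm\ref{thm.Hochman}. Lemma \ref{lem.unionSUp} yields $\bigcup_nW'_n$ strictly $(ph,1)$-universal. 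Cyclically extended, this subset lies inside the $h$-slice of $X$ (giving type (1)) and inside any irreducible $X$ with no \mme (giving type (2)); applied componentwise to each $X_n$ in a countable union and combined via one further appeal to Lemma \ref{lem.unionSUp}, it gives type (3). The $h=\infty$ case of type (3) causes no problem, since the approximation merely produces $t_n\to\infty$.

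The main subtlety is ensuring Borel structure throughout the passage $S\leftrightarrow S^p$: the cyclic pieces must be Borel (supplied for Markov shifts by the defining graph and for target systems $Y$ by \Thm\ref{thm:specdec}); the bijection of ergodic measures and its effect on entropy must hold off an almost null set; and the SFT approximation of a mixing Markov shift with entropy tending to the total entropy must be compatible with the $S^p$-structure on the cyclic component. None of this requires ideas beyond Sections 2--3 and Hochman's theorem; the real content of the proof is the reorganization of the period-$p$ question as a period-$1$ question on the cyclic subshift under $S^p$, together with the SFT-exhaustion argument that upgrades Hochman's SFT statement to a statement about mixing Markov shifts.
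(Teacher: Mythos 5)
Your proof is correct and follows the same route as the paper's: reduce from period $p$ to period $1$ by passing to a cyclic component $X^{(0)}$ under $S^p$, then invoke Hochman's theorem for the $p=1$ case; this is exactly the paper's one-sentence observation that $(X,S)$ is $(h,p)$-universal whenever it contains a cyclically moving set whose $S^p$-restriction is $h(S^p)$-universal. The only difference is one of granularity: where the paper simply cites Hochman's paper (Theorems 1.5, 1.6, Proposition 1.4) for the $p=1$ universality of mixing Markov shifts, you rederive the mixing Markov shift case from the mixing SFT case as stated in \Thm\ref{thm.Hochman} by exhausting $(X^{(0)},S^p)$ from within by mixing SFTs, slicing, and applying Lemma \ref{lem.unionSUp} --- a legitimate and more self-contained derivation, but not a genuinely different route. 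You have also correctly carried out the bookkeeping the paper suppresses: the Borel cyclic decomposition of the target via \Thm\ref{thm:specdec}, the entropy scaling $h \mapsto ph$ under $T\mapsto T^p$, the $T^p$-to-$T$ extension $\phi(T^iy)=S^i\phi_0(y)$, and the observation that $h=\infty$ only matters for item (3), where componentwise application suffices.
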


\begin{proof} 
All of this is in Hochman's work for the case $p=1$ 
(see Theorems 1.5 and 1.6, Proposition 1.4 
in \cite{Hochman}). The remark about almost-Borel isomorphism follows from Prop.\ \ref{lem:HCB}.
For $p>1$, observe that a Borel system $(X,S)$ is $(h,p)$-universal if it
 contains a cyclically moving subset  with a period $p$ such 
that the restriction of $S^p$ to it is $h(S^p)$-universal. 
\end{proof}

Recall the notions of $p$-maximal and $p$-Bernoulli
measures (see before Cor. \ref{cor.manySFTs}).

\begin{lemma}\label{l.analysis}
An irreducible Markov shift $(X,S)$ with entropy $h$ and period $p$ satisfying \eqref{eq.fec} has $h<\infty$ and is the disjoint union of a strictly $(h(S),p)$-universal 
system and a system supporting at most one measure from 
$\pen (S)$, which if it exists is the unique measure 
of maximal entropy of $S$, a $p$-Bernoulli measure.
\end{lemma} 

\begin{proof} 
(This follows the proof of \cite{Hochman} for $p=1$.) 
The $h(S)$-slice of $(X,S)$ is 
strictly $(h(S),p)$-universal
(Prop. \ref{p.UnivMark}). There is at 
most one measure of maximum entropy 
\cite{Gurevic1970}, which if it exists is 
a countable state Markov chain, 
and therefore $p$-Bernoulli 
(by \cite{OrnsteinShields} for $p=1$ and then for 
general $p$ by the argument 
of \cite{AdlerShieldsSmorodinsky})
and is supported on the complement of the $h(S)$-slice. 
\end{proof} 

\subsection{Characterizing Markov shifts}\label{sec.markov.char}

Recall 
that $(X_U,S_U)$ is the universal part of
$(X,S)$ (\Thm \ref{univpartexists}) 
and that $u_S:\NN\to[0,\infty]$ is the universality
sequence (Def.\ \ref{def.useq}).   

\begin{theorem}\label{thm.MarkovChar}
Let $(X,S)$ be a Borel system satisfying the finite entropy condition \eqref{eq.fec}. Then the following are equivalent:
\begin{enumerate}
 \item $(X,S)$ is almost-Borel isomorphic to a Markov shift. 
 \item $\pen(X\setminus X_U)$ is at most countable and each $\mu\in\pen(X\setminus X_U)$ is $p$-Bernoulli with entropy equal to $u_S(p)<\infty$ for some $p\in\NN$.
\end{enumerate}
\end{theorem}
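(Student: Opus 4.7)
The plan is to match the canonical splitting $X = X_U \sqcup (X\setminus X_U)$ from \Thm\ref{univpartexists} against the structural decomposition of a Markov shift into its irreducible components, as refined by \Prop\ref{p.UnivMark} and \Lem\ref{l.analysis}.

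\medbreak

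\emph{Direction $(1)\Rightarrow(2)$.} Suppose $(X,S)$ is almost-Borel isomorphic to a Markov shift and write that shift, modulo an almost null set, as a countable disjoint union $\bigsqcup_{i\in I} Y_i$ of irreducible Markov shifts of period $p_i$ and entropy $h_i < \infty$ (using \eqref{eq.fec}). \Lem\ref{l.analysis} splits each $Y_i$ as a strictly $(h_i,p_i)$-universal subsystem $U_i$ together with at most one further ergodic measure $\mu_i$, which when present is $p_i$-Bernoulli of entropy $h_i$. The union $\bigsqcup_i U_i$ belongs to $\mathcal{C}(u_S)$, so by \Thm\ref{univpartexists}(3) it lies in $X_U$ up to an almost null set. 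By \eqref{eq.Xup} and \Fact\ref{useqremark}, a surviving $\mu_i$ belongs to $\pen(X \setminus X_U)$ precisely when $h_i \geq u_S(p_i)$; the reverse inequality $u_S(p_i) \geq h_i$ follows from the strict $(h_i,p_i)$-universality of $U_i \subset X$. Hence each $\mu_i \in \pen(X \setminus X_U)$ has entropy $h_i = u_S(p_i) < \infty$, and these form the at most countable collection required by (2).

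\medbreak

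\emph{Direction $(2)\Rightarrow(1)$.} I build a Markov shift $T$ and apply \Prop\ref{lem:HCB}. For each $p\in\NN$ with $u_S(p) > 0$, \Fact\ref{f.irred-hp} yields irreducible period-$p$ Markov shifts with no \mme\ and entropies increasing to $u_S(p)$; by \Prop\ref{p.UnivMark}(3), their disjoint union $W^{(p)}$ is strictly $(u_S(p),p)$-universal. For each $\nu \in \pen(X \setminus X_U)$---by hypothesis $p_\nu$-Bernoulli of entropy $u_S(p_\nu) < \infty$---adjoin, again via \Fact\ref{f.irred-hp}, an irreducible Markov shift $Z_\nu$ of period $p_\nu$ and entropy $u_S(p_\nu)$ carrying a (unique, $p_\nu$-Bernoulli) \mme. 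Let $T$ be the disjoint union of all these pieces. I produce mutual almost-Borel embeddings. For $X \hookrightarrow T$: each $X_{Up}$ lies in $\mathcal{B}(u_S(p),p)$ and embeds into $W^{(p)}$ by universality, while each $\nu$ is measurably isomorphic to the \mme\ of $Z_\nu$ by Ornstein's theorem and the Adler--Shields--Smorodinsky extension to $p$-Bernoulli measures cited in \Lem\ref{l.analysis}, so embeds almost-Borelly into $Z_\nu$; \Lem\ref{lem.makedisjoint} assembles these into a globally disjoint embedding. For $T \hookrightarrow X$: each $W^{(p)}$ and each $Z_\nu$ minus its \mme\ lies in $\mathcal{C}(u_S)$ and so embeds into the strictly $\mathcal{C}(u_S)$-universal $X_U$, while the \mme\ of $Z_\nu$ embeds into $X \setminus X_U$ via its isomorphism with $\nu$; disjointifying again gives the reverse embedding. \Prop\ref{lem:HCB} yields $X \cong T$ almost-Borelly, which is (1).

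\medbreak

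The main obstacle is the bookkeeping for the non-universal pieces: one must match $\pen(X \setminus X_U)$ bijectively with the \mme's of the $Z_\nu$ in a way that preserves periods and entropies, then assemble all component-wise embeddings into globally disjoint maps. The measure-matching rests on the Ornstein--Adler--Shields--Smorodinsky classification of $p$-Bernoulli systems by the pair $(p,\text{entropy})$; the disjointification is handled by \Lem\ref{lem.makedisjoint} together with the stability of the targets $X_U$ and $\bigsqcup_p W^{(p)}$. The universal-side matching, by contrast, is essentially automatic from the characterization of $X_U$ in \Thm\ref{univpartexists}.
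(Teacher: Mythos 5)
Your argument is correct and follows the paper's route: in one direction you peel off the strictly universal part of each irreducible component via \Lem\ref{l.analysis} and identify the residual periodic-Bernoulli measures using the characterization \eqref{eq.Xup} and \Fact\ref{useqremark}; in the other you assemble a target Markov shift from strictly period-universal pieces and irreducible shifts carrying matching \mme's, exactly as the paper does with the systems $V_p$ and $W_\mu$. The only difference is cosmetic: where the paper compresses the final conclusion into the single line \lq\lq $X$ is almost-Borel isomorphic to $\bigsqcup V_p \sqcup \bigsqcup W_\mu$\rq\rq, you spell out the underlying Schr\"oder--Bernstein step (\Prop\ref{lem:HCB}) with explicit mutual embeddings; one small imprecision is that the disjointification really rests on \Lem\ref{lem.makedisjoint} applied to the \emph{source} pieces $X_{Up}$ (which are stable) together with the automatic pairwise disjointness of the ergodic supports of the $\nu$, rather than on \lq\lq stability of the targets.\rq\rq
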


It will be convenient to define $\Prob(p)$ as the collection of $p$-Bernoulli measures carried by $X\setminus X_U$ and let 
 \begin{equation}\label{eq.etaS}
   \eta_S(p):=\#\Prob(p).
 \end{equation}

\begin{proof}
First, let $(X,S)$ be a Markov shift. It is a countable union
$\bigcup_{i\in I} X_i$ where each $X_i$ is an irreducible Markov shift
with period 
$p_i$ and  entropy $h_i$.

Applying \Lem\ref{l.analysis}, we get $h_i<\infty$ and $X_i=X'_i\sqcup X''_i$ where $X'_i$ is strictly $(h_i,p_i)$-universal and $X''_i$ is either empty or carries a $p_i$-Bernoulli measure of entropy $h_i$ (and no other measure). Therefore the universal part of $X$ contains $\bigcup_{i\in I} X'_i$. Hence $X\setminus X_U$ carries at most the previous countably many periodic-Bernoulli measures. The period $p$ and entropy $h$ of any periodic-Bernoulli measure not carried by $X_U$ must satisfy $h=h_i\geq u_S(p)$ whenever $p_i=p$ (see \Thm\ref{univpartexists}). But $u_S(p)\geq h_i$ whenever $p_i=p$. Hence $h=u_S(p)$. This proves (1)$\implies$(2).

Conversely, let $(X,S)$ be a Borel system as in (2).
By \Thm\ref{univpartexists}, $X_U=\bigcup_{p\in\NN} X_{Up}$.  
According to \Lem\ref{lem.makedisjoint}, this is almost-Borel isomorphic to a disjoint union $\bigsqcup_{p\in\NN} V_p$ of some strictly $(h_p,p)$-universal systems $V_p$. By \Prop\ref{p.UnivMark}, each $V_p$ is isomorphic to a Markov shift.

Let $p\in\NN$. Each $\mu\in\pen(X\setminus X_U)$ is a periodic-Bernoulli measure. 
Pick an irreducible Markov shift $W_\mu$ with the same period $p$ and entropy $h=u_S(p)$ as $\mu$. 
Now $X$ is almost-Borel isomorphic
to the Markov shift
 $\bigsqcup_{p\in\NN} V_p\sqcup\bigsqcup_{\mu\in\Prob(p)} W_\mu$.
\end{proof}

This implies (note that \Lem\ref{lem.makedisjoint} does not apply):
\begin{corollary} \label{markovunion} 
If $X$ is the 
 (not necessarily disjoint) union of countably many 
systems $X_n$, each of which is almost-Borel isomorphic to 
a Markov shift satisfying \eqref{eq.fec},
then $X$ is almost-Borel isomorphic  to a Markov shift, itself
satisfying \eqref{eq.fec}.
%
\end{corollary}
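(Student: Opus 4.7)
The plan is to apply \Thm\ref{thm.MarkovChar} to $X$ by verifying its two conditions, using the hypothesis that each $X_n$ already satisfies them. I begin with two preliminaries. First, $X$ itself satisfies \eqref{eq.fec}: given $\mu\in\pen(X)$, ergodicity and invariance of each $X_n$ force $\mu(X_n)\in\{0,1\}$; since $X=\bigcup_n X_n$, countable subadditivity rules out $\mu(X_n)=0$ for every $n$, so $\mu\in\pen(X_n)$ for some $n$ and has finite entropy by hypothesis. Second, any strictly $(t,p)$-universal subsystem of $X_n$ is also a subsystem of $X$, so the universality sequences satisfy $u_{S_n}\leq u_S$ pointwise for every $n$; the converse inequality is not needed.

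The core step is to analyze $\pen(X\setminus X_U)$, where $X_U$ is the universal part of $X$ given by \Thm\ref{univpartexists}, and similarly $X_{n,U}$ for each $X_n$. Take $\mu\in\pen(X\setminus X_U)$ and choose $n$ with $\mu(X_n)=1$. Assume for contradiction that $\mu(X_{n,U})=1$. Then the characterization \eqref{eq.uni} applied to $X_n$ provides a period $p$ of $\mu$ with $h(S,\mu)<u_{S_n}(p)\leq u_S(p)$; but this is exactly what \eqref{eq.uni} applied to $X$ demands for $\mu(X_U)=1$, contradicting the choice of $\mu$. Hence $\mu\in\pen(X_n\setminus X_{n,U})$, which gives the inclusion $\pen(X\setminus X_U)\subset\bigcup_n \pen(X_n\setminus X_{n,U})$.

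Since each $X_n$ is almost-Borel isomorphic to a Markov shift satisfying \eqref{eq.fec}, \Thm\ref{thm.MarkovChar} applies to $X_n$: the set $\pen(X_n\setminus X_{n,U})$ is at most countable and each of its elements is $p$-Bernoulli with entropy $u_{S_n}(p)<\infty$ for some $p$. Thus $\pen(X\setminus X_U)$ is at most countable. I then upgrade the entropy value: for $\mu\in\pen(X\setminus X_U)$ which is $p$-Bernoulli of maximum period $p$, the preliminary step gives $h(S,\mu)=u_{S_n}(p)\leq u_S(p)$, while $\mu\notin\pen(X_U)$ forces $h(S,\mu)\geq u_S(q)$ for every period $q$ of $\mu$; taking $q=p$ yields the reverse inequality and hence $h(S,\mu)=u_S(p)<\infty$. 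Both conditions of \Thm\ref{thm.MarkovChar} are satisfied, so $X$ is almost-Borel isomorphic to a Markov shift, which inherits \eqref{eq.fec} from $X$.

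The main obstacle, beyond the careful bookkeeping of the universality sequences, is the conceptual one already flagged in the statement: \Lem\ref{lem.makedisjoint} does not apply, because Markov shifts carrying a measure of maximal entropy need not be stable. One therefore cannot simply disjointify $\bigcup_n X_n$ and paste together graph presentations; the detour through the intrinsic characterization \Thm\ref{thm.MarkovChar} is essential.
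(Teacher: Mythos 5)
Your proof is correct and takes the same route the paper intends: the paper derives the corollary from \Thm\ref{thm.MarkovChar} with only the phrase \lq\lq This implies\rq\rq\ (plus the warning about \Lem\ref{lem.makedisjoint}), and you have supplied a careful, correct elaboration of exactly that derivation, including the needed comparisons $u_{S_n}\leq u_S$, the containment $\pen(X\setminus X_U)\subset\bigcup_n\pen(X_n\setminus X_{n,U})$, and the entropy upgrade from $u_{S_n}(p)$ to $u_S(p)$.
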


We now relate Markov shifts with strictly \UEP-universal systems.

\begin{lemma}\label{lem.u-eta}
For a Markov shift, the conditions \eqref{eq.fec0} and \eqref{eq.fec}
are equivalent. For a Borel system $(X,S)$, the sequences $\bar
u_S,\bar\eta_S$ 
and $u_S,\eta_S$ {\rm(}from \eqref{eq.u-eta}, Def.\ \ref{def.useq}, \eqref{eq.etaS}{\rm)}
coincide. Moreover, the following are equivalent:
\begin{enumerate}
\item $(X,S)$ is strictly \UEP-universal;
\item $(X,S)$ is almost-Borel isomorphic to a Markov shift with $\bar\eta_S\equiv 0$.
\end{enumerate}
\end{lemma}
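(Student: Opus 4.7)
The proof has three parts matching the lemma's three claims, which I would handle in order. For \eqref{eq.fec0} $\Leftrightarrow$ \eqref{eq.fec} on a Markov shift $(X,S) = \bigsqcup_i X_i$: the forward direction is immediate since every nonatomic ergodic measure is carried (up to a null set) by a unique irreducible component and so has entropy bounded by its Gurevi\v{c} entropy. For the converse, if some component $X_i$ had $h_i = \infty$ I would exhibit an ergodic measure of infinite entropy on it, e.g., a stationary Markov measure whose conditional distribution at some vertex $v$ (the basepoint of infinitely many loops) carries infinite Shannon entropy.

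For the identification $\bar u_S = u_S$ on a Markov shift, the inequality $u_S(p) \geq \bar u_S(p)$ follows from \Prop\ref{p.UnivMark}: each component $X_i$ with $p_i \mid p$ contains strictly $(h_i, p_i)$-universal subsystems, and $p_i \mid p$ forces $\mathcal B(t,p) \subset \mathcal B(t, p_i)$, so such a subsystem is a fortiori strictly $(h_i, p)$-universal. The reverse inequality rests on the fact that almost-Borel embeddings preserve the maximum period of ergodic measures. If $X$ contained a strictly $(t, p)$-universal subsystem with $t > \bar u_S(p)$, it would embed an irreducible period-$p$ Markov shift of entropy $s \in (\bar u_S(p), t)$ (\Fact\ref{f.irred-hp}); by \Fact\ref{f.MSmaxper} the latter carries ergodic measures of maximum period exactly $p$ with entropy approaching $s$; the image in $X$ would have maximum period $p$ and so sit on some $X_i$ with $p_i \mid p$, forcing its entropy to be at most $h_i \leq \bar u_S(p)$, which contradicts its being close to $s > \bar u_S(p)$.

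The equality $\eta_S = \bar\eta_S$ is then bookkeeping: using $u_S = \bar u_S$ and the characterization of $X_U$ in \eqref{eq.uni}, a nonatomic ergodic $\mu$ lies off $X_U$ precisely when, writing $p$ for its maximum period, one has $h(\mu) = u_S(p) = \bar u_S(p)$ and $\mu$ is carried by some $X_i$ with $p_i = p$ and $h_i = \bar u_S(p)$. Uniqueness of the mme on such an $X_i$ together with its $p$-Bernoulli character (\Lem\ref{l.analysis}) gives a bijection between $\Prob(p)$ and the set counted by $\bar\eta_S(p)$.

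For the equivalence (1)$\Leftrightarrow$(2): if $(X,S)$ is strictly $\mathcal C(u_S)$-universal, then $X \in \mathcal C(u_S)$ forces $X = X_U$ up to an almost null set; \Fact\ref{fact.uep} then writes $X$ as a countable union of strictly $(u_S(p), p)$-universal pieces, each almost-Borel isomorphic to a Markov shift by \Prop\ref{p.UnivMark}, and \Cor\ref{markovunion} assembles these into a single Markov shift. Since $X \setminus X_U$ is almost null, $\eta_S \equiv 0$, whence $\bar\eta_S \equiv 0$ by the second part. Conversely, if $(X,S)$ is almost-Borel isomorphic to a Markov shift with $\bar\eta_S \equiv 0$, the second part yields $\eta_S \equiv 0$, so by \Thm\ref{thm.MarkovChar} $X \setminus X_U$ carries no nonatomic ergodic measures; thus $X = X_U$ up to an almost null set and is strictly $\mathcal C(u_S)$-universal by \Thm\ref{univpartexists}. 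The main obstacle, aside from careful max-period bookkeeping in part (2), is producing a genuine infinite-entropy ergodic measure in the converse half of part (1), rather than merely measures of unbounded finite entropy.
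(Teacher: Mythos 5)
Your argument follows the same route as the paper's proof, citing the same supporting results (Prop.~\ref{p.UnivMark}, Facts~\ref{f.MSmaxper} and~\ref{f.irred-hp}, Lem.~\ref{l.analysis}, Thms.~\ref{univpartexists} and~\ref{thm.MarkovChar}), and the logical structure is correct. Two points should be tightened.

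On $u_S\geq\bar u_S$: a strictly $(h_i,p_i)$-universal subsystem is $(h_i,p)$-universal when $p_i\mid p$ (because $\mathcal B(h_i,p)\subset\mathcal B(h_i,p_i)$), but it is \emph{not} in general strictly $(h_i,p)$-universal, since its ergodic measures need only have $p_i$---not $p$---as a period. The conclusion $u_S(p)\geq h_i$ still holds, either because $(h_i,p)$-universality of $X$ already yields it, or (cleaner) by \Fact\ref{useqremark}: $u_S(p)=\max_{q\mid p}u_S(q)\geq u_S(p_i)\geq h_i$; but the word \lq\lq strictly\rq\rq\ should be dropped.

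On the converse of \eqref{eq.fec0}~$\Leftrightarrow$~\eqref{eq.fec}: you are right that one must exhibit an honest ergodic measure of infinite entropy on an irreducible component $Z$ with $h(Z)=\infty$, a step the paper leaves implicit. But the device you propose---a stationary Markov measure whose one-step conditional law at a single vertex has infinite Shannon entropy---only applies when some vertex has infinite out-degree, and there exist \emph{locally finite} irreducible Markov shifts of infinite Gurevi\v{c} entropy. The robust construction is via the first-return loop shift at a vertex $v$: infinite Gurevi\v{c} entropy forces the first-return loop counts $f_n(v)$ either to be infinite for some $n$ or to grow super-exponentially along a subsequence; in both cases one can choose a probability distribution $q$ on first-return loops with $\sum_\ell q(\ell)\,|\ell|<\infty$ but Shannon entropy $H(q)=\infty$, and the i.i.d.\ loop process then yields, via Abramov's formula, an ergodic nonatomic measure on $Z$ with entropy $H(q)\big/\sum_\ell q(\ell)\,|\ell|=\infty$.
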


\begin{proof}
We write $X=\bigcup_{i\in I}X_i$ with $p_i, h_i$ as in 
\eqref{eq.u-eta}.
Any $\mu\in\pen(S)$ is carried by some $X_i$ by ergodicity. The equivalence of \eqref{eq.fec0} and \eqref{eq.fec} follows.  Prop.\ \ref{p.UnivMark} implies $u_S\geq\bar u_S$ and $u_S(p)>\bar u_S(p)$ would give a measure with maximum period $p$ and entropy $>\bar u_S(p)$. $\eta_S\equiv\bar\eta_S$ follows from \Thm\ref{thm.MarkovChar}.

Point (3) of \Thm\ref{univpartexists} shows that a Borel system is strictly \UEP-universal if and only if it coincides with its universal part. \Thm\ref{thm.MarkovChar} shows that this is equivalent to condition (2) above.
\end{proof}

Given Lemma \ref{lem.u-eta}, Theorem \ref{mainthm.markovpart} is equivalent to \Thm\ref{univpartexists}.

\subsection{Classification of Markov shifts}\label{sec.markov.class}


\begin{proof}[Proof of  \Thm\ref{mainthm.classif}]
The sequences $u_S,\eta_S$ coincides with $\bar u_S,\bar \eta_S$ according to \Lem\ref{lem.u-eta}.
Clearly the former are 
invariants of almost-Borel isomorphism. To see that these are complete, let $(X,S)$ and $(Y,T)$ be two Markov shifts satisfying \eqref{eq.fec0} and $(u_S,\eta_S)\equiv(u_T,\eta_T)$. 
By Cor.\ \ref{useqtheorem}, $S_U$ and $T_U$ are almost-Borel
isomorphic. 
By \Thm\ref{thm.MarkovChar}, $X\setminus X_U$
carries only periodic-Bernoulli measures. Let $p\in\NN$. Using the
periodic decomposition \Thm\ref{thm:specdec}, one finds a Borel
subset $X^{(p)}\subset X\setminus X_U$ carrying exactly the
$p$-Bernoulli measures of $X\setminus X_U$. Those measures have entropy $u_S(p)$ by \Thm\ref{thm.MarkovChar}. Hence the almost-Borel
isomorphism class 
of $X^{(p)}$ is 
defined by $(p,u_S(p),\eta_S(p))$. 
To conclude, remark that $X\setminus U=\bigsqcup_{p\in\NN} X^{(p)}$ up to an almost null set.

We turn to Claim \eqref{cond.MS}. The necessity of its first half follows from \Fact\ref{useqremark},  while  its second half is a consequence of the finite entropy condition \eqref{eq.fec}.
Conversely, given $(u,\eta)$ satisfying \eqref{cond.MS}, let us build a Markov shift
$(X,S)$ realizing these invariants. 

First, let $X':=\bigcup_{p\in\NN, u(p)>0} V_p$ with $V_p$ a strictly
$(u(p),p)$-universal Markov shift 
(\Prop\ref{p.UnivMark}).
By \Fact\ref{useqremark}, 
$u_S(p)=\sup_{q|p} u_S(q)$, which is $u(p)$. 
Second, let $X'':=\bigcup_{p\in\NN, \eta(p)>0} W_p\times  1_{\eta(p)}$ where $W_p$ is an irreducible Markov shift of entropy $u(p)$
and period $p$ with exactly one measure of maximum entropy and 
$1_{\eta(p)}$ is the identity on a set of cardinality $\eta(p)$. This is possible as $\eta(p)>0$ only if $u(p)<\infty$ (\Lem\ref{l.analysis}).
The Markov shift $X'\cup X''$ satisfies $u_S=u$ and $\eta_S=\eta$.
\end{proof}

\begin{proof}[Proof of Cor.\ \ref{cor.manySFTs}]
For $(X,S)$ almost-Borel isomorphic to a Markov shift $T$, let
$u:=u_S$ 
its universal sequence.  Prop.\ \ref{p.UnivMark} implies Claim
(1). The set $\mathcal M$ defined in Claim (2)  is contained in
$\pen(X\setminus X_U)$ and \Thm\ref{thm.MarkovChar} implies (2) and (3).

Conversely, let $(X,S)$ be a Borel system satisfying conditions
(1)-(3) for some $u:\NN\to[0,\infty]$. (1) implies $u_S\geq u$
and therefore $\mathcal M\subset\pen(X\setminus X_U)$. If
$u(p)>u_S(p)$, $\mathcal M$ would be uncountable. Finally, (2)-(3)
with $u=u_S$ imply condition (2) of \Thm  \ref{thm.MarkovChar} so $X$
is almost-Borel isomorphic to a Markov shift. 
\end{proof}

\section{Continuous factors of Markov shifts:  universality}\label{sec.continuousFactorsMS}

We prove Theorem \ref{thm.continuousfactorMS}. We first deal with the
following compact case and then 
reduce the general case to this one
through an entropy formula.




\begin{theorem} 
\label{prop.continuousfactorSFT}
Let $(X,S)$ be an irreducible SFT with period $p$ and let
 $\pi:(X,S)\to(Y,T)$ be a continuous factor map.
Then, for any $0\leq h<h(T)$, there is a period $p$, irreducible SFT
$X'\subset X$ such 
that $h(X')>h$ and the restriction of 
$\pi$ to $X'$ is injective. 
In particular, $(Y,T)$ is $(h(T),p)$-universal.
\end{theorem}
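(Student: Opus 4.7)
The plan is to reduce to producing, for every $h'\in(h,h_\topo(T))$, an irreducible period-$p$ subshift of finite type $X'\subset X$ with $h_\topo(X')>h'$ and $\pi|_{X'}$ injective. The universality claim then follows quickly: since $X'$ is an irreducible period-$p$ SFT, its $h(X')$-slice is strictly $(h(X'),p)$-universal by \Prop\ref{p.UnivMark}(1); the injectivity of $\pi$ together with the Lusin--Souslin theorem identifies this slice with a subsystem of $Y$, and \Lem\ref{lem.unionSUp} applied to a sequence of such subsystems with $h'\nearrow h_\topo(T)=h(T)$ (the latter by the variational principle on compact $Y$) yields strict $(h(T),p)$-universality of $Y$. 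To build $X'$, fix $h'\in(h,h_\topo(T))$, and use Bowen's formula \eqref{eq.BowenFormulaCover} in $Y$ to obtain $\epsilon>0$ with $h_{\text{sep}}(Y,T,\epsilon)>h'$; then for infinitely many $n$ there is an $(\epsilon,n)$-separated set $E_n\subset Y$ with $|E_n|\geq e^{nh'}$, which I lift to $\tilde E_n\subset X$ by choosing one $\pi$-preimage per point.

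Next I would transfer separation back into $X$. Uniform continuity of $\pi$ on the compact SFT $X$ gives $N_\epsilon$ such that agreement on coordinates $[-N_\epsilon,N_\epsilon]$ implies $\pi$-images at distance $<\epsilon/4$. For distinct $x,x'\in\tilde E_n$, the $(\epsilon,n)$-separation of $\pi x,\pi x'$ is witnessed at some $k^*\in[0,n)$, so $x,x'$ disagree on the central window $W:=[-N_\epsilon,n+N_\epsilon-1]$. Moreover, the triangle inequality shows that whenever $z,z'\in X$ have (suitably shifted) $W$-windows equal to those of $x,x'$, one has $d_Y(\pi S^{s_0+k^*}z,\pi S^{s_0+k^*}z')\geq\epsilon/2>0$ for the appropriate shift $s_0$; this is the mechanism that will preserve $\pi$-injectivity once the $W$-windows are embedded inside a larger SFT.

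The third step is to assemble $X'$. Fix a vertex $v$ of a graph presentation of $X$. Pigeonhole on the pair of endpoints of the $W$-window of $x$ retains at least $|\tilde E_n|/|V|^2$ elements sharing a start $u$ and end $u'$; by irreducibility of $X$, fix paths $v\to u$ and $u'\to v$ of bounded lengths $\ell_1,\ell_2$, and after adjusting $n$ by a residue modulo $p$ arrange $M:=\ell_1+(n+2N_\epsilon)+\ell_2\in p\NN$. This produces $K_1\geq e^{nh'}/|V|^2$ distinct loops at $v$ of length $M$. Repeating the entire construction at parameter $n+p$ yields $K_2$ loops at $v$ of length $M+p$. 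Let $X'$ be the edge shift of the one-vertex multigraph whose self-loops are these $K_1+K_2$ loops (subdivided into length-one edges to make it a genuine $1$-step SFT), with each edge labelled by the corresponding edge of $X$. Then $X'$ is an irreducible SFT of period $\gcd(M,M+p)=p$, whose entropy $h^*$ satisfies $K_1 e^{-h^*M}+K_2 e^{-h^*(M+p)}=1$, forcing $h^*\geq h'-O(\log|V|/n)>h$ for $n$ large.

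The main obstacle is verifying the injectivity both of the map $X'\to X$ and of $\pi|_{X'}$. For the latter, two distinct points of $X'$ first differ at some loop-slot $k$, coming from distinct $x_i,x_j\in\tilde E_n$ (or from the length-$(M+p)$ analog); by construction the $W$-windows of $x_i,x_j$ appear as exact blocks in the corresponding positions of the two points, and the triangle-inequality argument of the second step delivers $\pi$-distinct images. For the former, the vertex $v$ may a priori recur inside a loop and render the loop decomposition ambiguous; this is handled by a small preliminary modification (prepending a fixed distinguishing ``marker'' block at $v$ to every loop, or observing that the ambiguity is at most boundedly-to-one and is absorbed into the entropy margin $h^*>h$).
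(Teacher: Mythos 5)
Your overall plan—reduce to building a period-$p$ irreducible SFT $X'\subset X$ with $h_\topo(X')>h'$ and $\pi|_{X'}$ injective, lift separated sets, and assemble loops at a distinguished vertex—is the same general shape as the paper's proof, and your entropy and period bookkeeping is fine. But the injectivity argument for $\pi|_{X'}$ has a genuine gap, and it is precisely the gap that the most substantial part of the paper's proof (Claim~1, Claim~2(S2), and the markers $m_i=\ell^A\tilde\ell^C L_i$) exists to close.

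Your step 4 says ``two distinct points of $X'$ first differ at some loop-slot $k$, coming from distinct $x_i,x_j$, and then the triangle-inequality argument of step 2 applies.'' But two distinct bi-infinite points $p,p'$ of $X'$ need not have their loop decompositions in the same phase at all: for example $p$ could consist entirely of length-$M$ loops (boundaries at $M\ZZ$) and $p'$ entirely of length-$(M+p)$ loops (boundaries at $(M+p)\ZZ$). There is then no common $s_0$ at which both carry the $W$-windows of distinct lifts $x_i,x_j$, and the triangle inequality gives nothing. More generally, ``first difference'' is not well defined for bi-infinite sequences, and the slot-by-slot comparison only becomes meaningful after you have independently shown that $\pi\psi(p)=\pi\psi(p')$ forces the two loop decompositions to be aligned. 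That is a nontrivial fact about the continuous map $\pi$, not a bookkeeping fact about the SFT: a priori $\pi$ can collapse two phase-shifted concatenations onto the same $Y$-point.

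The paper's device is to prepend to each block a marker $m_i=\ell^A\tilde\ell^C L_i$ built from two loops $\ell,\tilde\ell$ at $0$ whose \emph{$\pi$-images are distinct}, and then to prove a synchronization lemma (Claim~1): the heteroclinic point $Z=\pi(\ell^\infty.\tilde\ell^\infty)$ has discrete orbit, so one can certify $k=0$ from $\pi$-data alone. Claim~2(S2) then removes the $\Gamma_N$-words whose $\pi$-images could be confused with a long $\ell$-block, using a spanning-set count that costs only $e^{(1-\zeta)HN}$, small relative to $e^{h'N}$. Only with both of these in hand does one prove $M(\bar x)=M(\bar y)$, after which your loop-slot comparison applies (this is essentially the last paragraph of the paper's proof). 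Your suggested fix—prepending a ``fixed distinguishing marker block at $v$''—addresses only injectivity of the label map $X'\to X$, not recognizability of the marker in $Y$: a marker is distinguishing in $X$ by fiat, but its $\pi$-image need not be, and designing one that is (and proving it) is the point. Likewise, ``the ambiguity is boundedly-to-one and absorbed into $h^*>h$'' is not a legal substitute: the theorem requires genuine injectivity of $\pi|_{X'}$, and a boundedly-to-one composite $\pi\circ\psi$ does not by itself produce an injective restriction without a further (and here nontrivial) pruning step.

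So the structure you want is right, and the reduction to the ``in particular'' claim via \Prop\ref{p.UnivMark}, \Lem\ref{lem.unionSUp}, Lusin--Souslin and the variational principle is clean, but the construction needs markers that are recognizable after applying $\pi$ together with a synchronization lemma of the type of Claim~1 and a pruning step of the type of Claim~2(S2). Without these the injectivity of $\pi|_{X'}$ is unproved.
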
 

\begin{remark} \label{badborel}
The universality claim of
\Thm\ref{prop.continuousfactorSFT} fails badly 
for Borel factor maps, even if finite to one. For example, 
from a mixing shift of finite type with entropy $h>0$,  with 
 the Borel Periodic Decomposition one can show that 
there is a Borel at most 2-to-1 map which collapses 
all ergodic measures with maximum period 2 to ones with maximum period 1, and is the identity on supports of other 
ergodic measures. The image is not $h$-universal. 
\end{remark}

To prove \Thm\ref{prop.continuousfactorSFT},
we will use the formulas 
\eqref{eq.BowenFormulaCover}
for the topological entropy of a t.d.s. in terms of separated and spanning sets. Sec.\ \ref{sec.back.markov} recalls some standard definitions and notations for Markov shifts including $[w]_X$, $[w]$, $|w|$, $x|_a^b$, and $|w|$. 

If $v,w$ are two finite words over the alphabet of $X$, 
then $|v|,|w|$ are
their lengths and $[v.w]:=\sigma^{|v|}[v]\cap [w]$ is the cylinder
$\{x\in X:x|_{-|v|}^0=v$ and 
$x|_0^{|w|}=w\}$.
We define $v^\infty.w^\infty$ as the unique point in 
all $[v^n.w^n]$ for $n\geq1$ and $v^\infty:=v^\infty.v^\infty$. We shall write $v$ for its length, e.g., $\ell^A=A|\ell|$.

\begin{proof}[Proof of \Thm\ref{prop.continuousfactorSFT}]
Observe that the claim about universality follows 
immediately from the embedding claim according 
to Proposition \ref{p.UnivMark}. 
We assume $h(T)>0$ (otherwise 
the claim is
trivial). 
Let $G$ be a
strongly connected, finite graph presenting $X$.
Fix $0<\zeta<1$ small enough and then
$h'$ such that $h<(1-\zeta)h(Y)<h'<h(Y)$.
Let $\eta_1 >0$ 
small enough such that 
the separation entropy at scale $4\eta_1$ satisfies 
$h_{\text{sep}}(T,\pi(X),4\eta_1)> h' > h$. 
 Observe that
 \begin{equation}\label{eq.htopoeta}
   h_{\text{sep}}(T,\pi(X),4\eta_1)
=\sup_{v\in G} h_{\text{sep}}(T,\pi([v]),4\eta_1)
   \ .
 \end{equation}
$G$ is finite, hence this supremum is 
achieved at some vertex $v$, which we will denote by $0$: 
\begin{equation} \label{eta_1}
h_{\text{sep}}(T,\pi([0]),4\eta_1)> h' > h \ .
\end{equation} 


\medbreak\noindent{\bf Claim 1.} {\it 
Let 
  $\ell$ and $ \tilde\ell$ be  loops in $G$ 
based at  vertex $0$
such that 
$P:=\pi(\ell^\infty)\ne\tilde P:=\pi(\tilde\ell^\infty)$.
 Then there are  a positive multiple $M$ of $p$ and  a number 
$0<\eta<\eta_1$ such that for all 
integers $A,C\geq M$, 
if  $x,y\in\pi([\ell^A. \tilde\ell^C])$ and 
$-\ell^A+M   \leq k \leq  \tilde\ell^C -M$, then }
 \begin{equation}\label{local.synchro}
      k=0\iff \max_{0\leq j\leq \ell^A-M} d(T^{-j}x,T^{k-j}y)<\eta.
 \end{equation}
{\it Moreover, for any $x,y\in X$ with $x|_{-M}^M=y|_{-M}^M$,
$d(\pi(x),\pi(y))<\eta/4.$ }
\begin{proof}[Proof of Claim 1]
Let $Z=\pi(\ell^\infty.\tilde\ell^\infty)$. As $Z$ is a heteroclinic
point, its orbit is discrete. Define 
 $r_0=\min(d(Z,\mathcal O(Z)\setminus\{Z\}),\eta_1)>0$. 
The uniform continuity of $\pi$ gives $M\in p\NN$ such that, 
for all $u,v\in X$, $u|_{-M}^{M}=v|_{-M}^{M}$ implies
$d(\pi(u),\pi(v))<r_0/16$. 
We will prove Claim 1 for this $M$ and $\eta = r_0/4$. 

Let $\hat x,\hat y\in[\ell^A.\tilde\ell^C]$, 
$x=\pi(\hat x),y=\pi(\hat y)$ and 
$-\ell^A+M   \leq k \leq  \tilde\ell^C -M$. 
Note, $\hat x|_{-\ell^A}^{M}=\hat y|_{-\ell^A}^M$ so, if $k=0$:
$$
0\leq j \leq \ell^A-M \ \implies \ 
 d(T^{-j}x,T^{k-j}y) < r_0/16 = \eta / 4\ . 
$$ 
Also,
$\hat y|_{k-M}^{k+M}=(\ell^\infty.\tilde\ell^\infty) |_{k-M}^{k+M}$, 
so  $d(T^{k}y,T^{k}Z)<r_0/16$ and, for $k\ne0$, 
\begin{align*}
\max_{0\leq j\leq \ell^A-M} d(T^{-j}x,T^{k-j}y) 
&\geq      d(x,T^{k}y)  \\
&\geq d(Z,T^kZ)-d(Z,x)-d(T^ky,T^kZ) \\ 
& > r_0-r_0/16 -r_0/16 = (7/8)r_0 > \eta 
 \ .
\end{align*} 
This proves Claim 1. 
\end{proof} 
\bigbreak

We fix $M,\ell,\eta$ according to Claim 1. Recall $\zeta>0$.

\medbreak\noindent{\bf Claim 2.} {\it  
There is $M_0\in\NN$ such that 
for all  large 
$M\in p\NN$, 
there is 
a family $\Gamma_N$ of $N$-loops based at vertex $0$ 
such that $\# \Gamma_N \geq e^{h'N}$ and the  following holds. 

If $\{\bar x^{\gamma} : \gamma \in
\Gamma_N\}\subset X $ is such that 
$\bar x^{\gamma} |_0^N = \gamma $, for all $\gamma$ in $\Gamma_N$,  
then for all $\gamma\in\Gamma_n$ and $\gamma\ne 
  \gamma'$ in $\Gamma_n$,  
two separation properties are satisfied: 
 \begin{enumerate}
  \item[(S1)] $\pi(\bar x^{\gamma})$ and $\pi(\bar x^{\gamma'})$
are $(\eta,M+M_0,N-(M+M_0))$-separated;
  \item[(S2)] $\pi(\bar x^{\gamma})$
is $(\eta,M+M_0,N-(M+M_0))$-separated from $\pi (\hat z)$ 
whenever $\hat z\in X$, $k\in\ZZ$ and $m:=\lceil\zeta N\rceil$
satisfy (i)$\ \hat z|_k^{k+m}=\ell^\infty|_0^m$  and 
$(ii) \ \ [k,k+m]\subset [M+M_0,N-(M+M_0)] $  . 
\end{enumerate}
}
\medbreak

\begin{proof}[Proof of Claim 2]
We choose 
 $M_0\in p\NN$ such that, for any vertex $v$ 
in the graph $G$, from which there is a path to $0$ of length 
a multiple of $p$, we may fix paths of length $M_0$: $p^{\to v}$ from vertex $0$ to $v$
and  a path  $p^{v\to}$ from $v$ to $0$.

Because $\eta < \eta_1$ and the 
inequality in \eqref{eta_1} is strict,  
there is an $\epsilon >0$ such that 
for any sufficiently large $n$ there is   a 
$(4\eta,n)$-separated subset $S_n$ of 
$\pi([0])$ such that  $\#S_n\geq  e^{(1+\epsilon) h'n}$. 
For each $x\in S_n$, pick $\hat x\in\pi^{-1}(x)\cap[0]$ 
and define the following concatenation:
 $$
   \gamma(\hat x)\  := \ p^{\to\hat x_{-M}}\cdot \hat x|_{-M}^{n+M} \cdot
   p^{\hat x_{n+M}\to} \ \ . 
 $$
Given $n$, define 
$N
= n+2M_0+2M  
$; for $x$ in $ S_n$,  
 $\gamma (\hat x)$ is a loop of length $N$ based at $0$. Define 
\begin{align*}
\widehat{\Gamma_N}\ &=\ \{ \gamma (\hat{x}): x\in S_n \} \ , \\ 
\Gamma_N \ &=\ \{ \gamma \in \widehat{\Gamma_N} : 
\gamma \text{ satisfies } (S2) \} \ .
\end{align*} 
We will show that for all sufficiently large $n$, 
Claim 2 holds for this $\Gamma_N$.  

For distinct $w,x\in S_n$, 
there is an integer $0\leq k<n$ 
such that $d(\pi(\sigma^k\hat w),\pi(\sigma^k\hat x))>4\eta$. 
Hence, given any $\bar w,\bar x$ in $X$ such that 
 $\bar w|_0^N=\gamma (\hat w)$ and  
$\bar x|_0^N=\gamma (\hat x)$, 
we have from Claim 1 some $k$ in the interval $[M+M_0, n+M+M_0]
=[M+M_0, N -(M+M_0)]$ such that 
\begin{align*}
&\    d\big(T^k\pi(\bar w),T^k\pi(\bar x)\big) \\
>\ &\ 
d\big(T^{k-M-M_0}\pi(\hat w),T^{k-M-M_0}\pi(\hat x)\big)-2\eta/4
\      \  > \ \eta \ .
\end{align*} 
This shows that $\Gamma_N$ will satisfy the separation property $(S1)$. 

Let $S'_n$ be the set of points $x\in S_n$ 
such that $\gamma (\hat x)$  fails the separation property $(S2)$.
Pick $H$ such that  $h_\topo(Y)<H<h'/(1-\zeta )$. 
By \eqref{eq.BowenFormulaCover} 
we can find a number $C<\infty$ such that
 \begin{equation} \label{ineq1}
    \forall m\geq0\quad r_{\text{span}} (\eta/2,m,\pi(X),T)\leq C e^{Hm} \ .
\end{equation} 
As $Y=\pi(X)$ is compact and $\pi$ uniformly continuous, 
 \begin{equation} \label{ineq2}
   \exists C'<\infty\ \forall m\geq 0\quad
   r_{\text{span}} (\eta/2,m,\pi[\ell^{[m/\ell]}]_X,T) \leq C'\ .
\end{equation} 

Now suppose 
$m:=\lceil\zeta N\rceil$ 
with $[k,k+m]\subset [M+M_0,N-(M+M_0)]$ as in (S2).
It follows from  
\eqref{ineq1} and \eqref{ineq2} 
that the set of all $\pi (\hat z)$ 
such that 
$\hat z|_k^{k+m}=\ell^\infty|_0^m$ 
is contained in
at most
 $C e^{k H}\times C'\times C e^{(N-k-\zeta N)H}=C'C^2 e^{(1-\zeta) HN}$ 
dynamical $(\eta/2,N)$-balls. 
No such set can contain  
two $(\eta, M+M_0, N-(M+M_0))$-separated points. 
Thus, considering the union over 
$k$
we have  
 $
    \#S'_n\leq N C'C^2 e^{(1-\zeta)HN}
 $
and therefore for large $N=n+2(M+M_0)$ 
and for $C''=e^{-2(M+M_0)}$,
\begin{align} \label{ineqexp}
|\Gamma_N| \ &=\  |\widehat{\Gamma_N}| - | S'_n| \\ \notag 
&\geq \ 
C''e^{(1+\epsilon)h'N} - N C'C^2 e^{(1-\zeta)HN} \ 
> \ e^{h'N}
\end{align}  
where the last inequality holds for large $N$
because $(1-\zeta )H < h'$.
This finishes the proof of Claim 2. 
\end{proof} 

As $X$ has period $p$,
we may fix  loops $L_1,L_2$ based at vertex $0$ 
such that $|L_2|=|L_1|+p \in p\NN$. 
We will have markers of the form 
$m_i:=\ell^A\tilde\ell^CL_i$, $i=1,2$, for some integers $A,C$. 
Fix $N$  satisfying Claim 2. To recognize markers, we fix $C$ and then $A$ large enough so that:
 \begin{equation}\label{eq.AClarge}
    |\tilde \ell^C|>\zeta N +2M+M_0 \text{ and }
    |\ell^A|>\tilde\ell^C+
\max_{i=1,2} L_i+\zeta N+2M+M_0.
 \end{equation}

We consider the subshift of finite type $X_K\subset X$ defined as the 
set of paths obtained from concatenations of 
words of the form 
$m_aw_{1}w_{2}\dots w_{K}$. 
where  $K$ is fixed, but large, $a=1,2$ and 
$w_1,w_2,\dots,w_K\in \Gamma_N$. 

Observe that $X_K$ is irreducible and its period is a multiple of $p$
and divides the two lengths $|m_a|+K|w_i|$, for $a=1,2$ (and any
$i$). These lengths differ by $p$, hence the period of $X_k$ 
is equal to $p$. By (\ref{ineqexp}), the topological entropy of
$X_K$ has the bound:
 $$ 
h_\topo(X_K)\ \geq \ \frac{K\log\#
\Gamma_N}
{KN+|m_2|}\ 
>
\ \frac1{1+\frac{|L_2|+|\ell^A\tilde\ell^C|}{KN}}h'\ ,
$$
with the right side greater than $h$ for large $K$ (given $N$). 
It only remains to show that $\pi : X_K\to Y$ is
 injective.
 Let $\bar x,\bar y\in X_K$ with $\pi(\bar x)=\pi(\bar y)$.
\smallbreak

We first prove $M(\bar x)=M(\bar y)$ where $M(\bar x)$ is the set of positions where a marker $m_i$ appears.
Assume that $0\in M(\bar x)$ so: $\bar x|_0^{\ell^A}=\ell^A$. We claim that the corresponding subword of $\bar y$ must also be part of marker (mostly). Indeed, the
separation property (S2) from Claim 2 implies that, if $\bar
y|_n^{n+N}=w_i$ 
in $\Gamma_N$, then $[n+M+M_0,n+N-M-M_0]$ cannot overlap
$[0,\ell^A]$ on a set of length $\geq\zeta N$. Thus, $\bar y|_{\zeta
  N+M+M_0}^{\ell
^A-\zeta N-M-M_0}$ occurs in $\bar y$ as part of
a marker $m_i=\ell^A\tilde\ell^CL_i$ ($i=1$ or $2$).

It follows that $M(\bar y)$ contains some $k$ with
$-\tilde\ell^CL_i-\zeta N-M-M_0\leq k\leq \zeta N+M+M_0$. Thanks to
\eqref{eq.AClarge}, $-\ell^A+M\leq k\leq \tilde\ell^C-M$ and Claim 1
applied to $\sigma^{\ell^A}\bar x,\sigma^{\ell^A-k}\bar
y\in[\ell^A.\tilde\ell^C]$ 
yields $k=0$. It follows that $\bar M(x)= M(\bar y)$ by symmetry.

\smallbreak

Let $n_1<n_2$ be two consecutive elements of $M(\bar x)=M(\bar y)$. Considering $\bar x$ and $\bar y$, we have: $n_2-n_1=|m_i|+KN=|m_j|+KN$. Thus $|m_i|=|m_j|$, so $m_i=m_j$ as the lengths are pairwise distinct. Let $r:=n_1+|m_i|+sN<n_2$ for some positive integer $s$. Observe $\bar x|_r^{r+N}=w_i$, $\bar y|_r^{r+N}=w_j$ for some $i,j\in I_N$. If $i\ne j$, then,  $0=d(\pi(\sigma^{r+k}\bar x),\pi(\sigma^{r+k}\bar y))>d(\pi(\sigma^kx_i),\pi(\sigma^kx_j))-2\eta$ but this should be positive for some $k\in[M+M_0,N-M-M_0]$. Thus $i=j$. As $\inf M(\bar x)=-\infty$ and $\sup M(\bar x)=\infty$, $\bar x=\bar y$, concluding the proof.
\end{proof}


Theorem \ref{thm.continuousfactorMS} is now an obvious consequence of the next Proposition (whose proof follows).

\begin{proposition}\label{p.hstar}
Let $\pi:(X,S)\to(Y,T)$ be a continuous factor map from an irreducible, period $p$ Markov shift into a self-homeomorphism of a Polish space. For any $\mu\in\Proberg(S)$ and $h<h(T,\pi_*\mu)$, there exists $\nu\in\Proberg(S)$ with compact support and
 $
    h(T,\pi_*\nu) > h.
 $
In particular,
 \begin{equation}\label{e.hstar}\begin{aligned}
   \sup \{h(T,\pi(\Sigma)):\Sigma\subset X,\; \Sigma &\text{ irreducible period $p$ SFT}\}\\
    & = \sup \{ h(T,\pi_*\mu):\mu\in\Proberg(S)\}.
 \end{aligned}\end{equation}
\end{proposition}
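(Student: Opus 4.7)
My plan is to reduce \Prop\ref{p.hstar} to the compact case of \Thm\ref{prop.continuousfactorSFT} by exhausting $X$ with compact irreducible period-$p$ SFTs $X_N$ whose $\pi$-images retain most of the entropy of $\pi_*\mu$. Concretely, I fix a strongly connected graph $G=(V,E)$ presenting $X$ and an increasing sequence of finite strongly connected subgraphs $G_N$ exhausting $G$, arranged so that $X_N:=X_{G_N}$ is a compact irreducible SFT of period exactly $p$ for all large $N$ (possible since the period of $X$ is realized by loops through finitely many vertices). The main claim becomes: for any ergodic $\mu$ with $h(T,\pi_*\mu)>h$, one has $h_\topo(T,\pi(X_N))>h$ for all sufficiently large $N$. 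Granting this, the variational principle on the compact system $(\pi(X_N),T)$ yields an ergodic $T$-invariant measure $\eta$ on $\pi(X_N)$ with $h(T,\eta)>h$; a standard lifting (take any $S$-invariant preimage measure on the compact set $X_N$ and extract an ergodic component) then produces $\nu\in\Proberg(S)$ supported in $X_N$ with $h(T,\pi_*\nu)=h(T,\eta)>h$, as required.

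To prove the key claim, I plan to exhibit at least $e^{h'n}$ distinct $(n,\epsilon_0/2)$-separated orbits inside $\pi(X_N)$, for some $h<h'<h(T,\pi_*\mu)$. The starting point is Katok's entropy formula on the Polish space $(Y,T,\pi_*\mu)$: for small enough $\epsilon_0>0$ and $\delta>0$, any Borel subset of $Y$ of $\pi_*\mu$-measure at least $1-\delta$ contains an $(n,\epsilon_0)$-separated set of size at least $e^{h'n}$ for all large $n$. Lifting via $\pi$ and applying Birkhoff's theorem to the indicator of $\{x_0\in V_N\}$ (choosing $N$ large so $\mu\{x_0\in V_N\}>1-\delta^2$), I obtain at least $e^{h'n}$ preimages $x_j\in X$ whose first $n$ coordinates lie in $V_N$ on a set of density at least $1-\delta$ and whose $\pi$-images are $(n,\epsilon_0)$-separated in $Y$.

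The main obstacle is the combinatorial step of converting these ``almost-$X_N$'' orbits into genuine elements of $X_N$ at controlled entropy cost. My plan is to use a marker-and-concatenation construction in the spirit of the proof of \Thm\ref{prop.continuousfactorSFT}: for each $x_j$ I excise the excursions outside $V_N$ and replace each by a fixed path of the same length inside $G_N$, which exists once the excursion is long enough by strong connectivity of $G_N$ (short excursions are absorbed by enlarging $V_N$). Since the total density of ``bad'' positions is at most $\delta$, the number of distinct excursion patterns is bounded by $e^{n\alpha(\delta)}$ with $\alpha(\delta)\to0$ as $\delta\to0$, so the substitution collapses at most that many $x_j$'s to the same element of $X_N$; for $\delta$ small we still obtain $e^{h'n-o(n)}\geq e^{hn}$ distinct straightened paths. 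Uniform continuity of $\pi$ on the compact set $X_N$ ensures that the $\pi$-images of the straightened sequences remain $(n,\epsilon_0/2)$-separated, giving $h_\topo(T,\pi(X_N))\geq h'>h$.

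Finally, for the consequence \eqref{e.hstar}, the inequality ``$\leq$'' is immediate from the variational principle on the compact system $(\pi(\Sigma),T)$; conversely, given $\mu$ with $h(T,\pi_*\mu)>h$, the main statement produces a compactly supported $\nu$, whose support lies in some $X_N$ (which is a period-$p$ irreducible compact SFT for large $N$), and \Thm\ref{prop.continuousfactorSFT} applied to the restriction $\pi\colon X_N\to Y$ yields an irreducible period-$p$ SFT $\Sigma\subset X_N\subset X$ with $h_\topo(T,\pi(\Sigma))>h$.
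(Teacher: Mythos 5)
Your overall strategy—exhaust $X$ by compact irreducible period-$p$ SFTs $X_N$, show $h_\topo(T,\pi(X_N))>h$, then use the variational principle and a finite-to-one lift—is sound in outline and shares with the paper the idea of \lq\lq straightening\rq\rq\ excursions outside a compact part. But the detailed argument has a genuine gap at the separation step, and a secondary gap in the collision count.

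The critical problem is the claim that \lq\lq uniform continuity of $\pi$ on the compact set $X_N$ ensures that the $\pi$-images of the straightened sequences remain $(n,\epsilon_0/2)$-separated.\rq\rq\ This is false. Uniform continuity only tells you that $\pi(\tilde x_i)$ and $\pi(x_i)$ are $\epsilon_0/4$-close at those $k$ for which the window $[k-M,k+M]$ avoids the excursions of $x_i$; this is a set of positions of density $\geq 1-(2M+1)\delta$, \emph{not} all positions. Meanwhile $(n,\epsilon_0)$-separation of $\pi(x_i)$ and $\pi(x_j)$ is witnessed at a \emph{single} position $k\in[0,n)$, and nothing prevents that $k$ from lying near an excursion of $x_i$ or $x_j$, in which case the straightening can destroy the separation entirely. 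A density bound on bad positions is simply not the right hypothesis to preserve $(n,\epsilon)$-separation; one would need something like $\bar d$-separation at a positive density of times, which you have not extracted from Katok's formula and which requires a different setup. A second gap is the collision count: for fixed straightened word $w$ and excursion pattern $\mathcal E$, the preimages $x_j$ can differ from $w$ on $\mathcal E$ using symbols from the \emph{infinite} alphabet $V\setminus V_N$, and you do not control how many $(n,\epsilon_0)$-separated lifts can share the same pair $(w,\mathcal E)$; the bound \lq\lq$e^{n\alpha(\delta)}$ excursion patterns, hence $e^{n\alpha(\delta)}$ collisions\rq\rq\ does not follow. (There is also a minor technical point: Katok's entropy formula as you invoke it is for compact metric systems, and needs justification on a merely Polish $Y$.)

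The paper's proof sidesteps both problems by never passing through $(n,\epsilon)$-separated orbit segments. It first builds, via the loop shift at a vertex $0$, a compact SFT $\bar X$ and a \emph{continuous retraction} $p:X_0\to\bar X$ with $\mu(\{x: p(x)|_0\neq x_0\})$ small, and sets $\nu:=p_*\mu$ (automatically ergodic). The entropy comparison between $h(T,\pi_*\mu)$ and $h(T,\bar\pi_*\nu)$ is then carried out purely at the measure-theoretic level: one picks a finite partition $P$ of $Y$, uses the Birkhoff theorem to show that for $\mu$-typical $x$ the $P$-names of $\pi(x)$ and of $\bar\pi p(x)$ agree at a $(1-\rho)$-density of times, and then compares the two entropies via Shannon--McMillan--Breiman together with a Hamming-ball count ($\#\{w:w\sim v\}\leq e^{(\delta h/10)n}$). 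The point is that measure entropy and SMB are stable under altering a small-density set of symbols, whereas $(n,\epsilon)$-separation is not. If you want to rescue your orbit-segment approach, you would need to redo Step~2 in the $\bar d$-metric (so that separation survives density-$\delta$ edits), control the preimage count using that the excursion \emph{content} is irrelevant to $\pi(\tilde x_j)$, and then still confront the infinite-alphabet issue—at which point you are essentially rederiving the paper's SMB argument in disguise.
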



To prove the above proposition, we need some
definitions and notations.
For a Borel partition $P$, $\partial P$ denotes the union of the
boundaries of the elements of $P$. For $x\in X$, $P(x)$ is the unique element of $P$ containing $x$. $P^n$ is the set of words  $v=v_0\dots v_{n-1}$ on $P$ of length $n$. Any such word defines a cylinder $[v]:=v_0\cap T^{-1}v_1\cap\dots\cap T^{-n+1}v_{n-1}$. $v$ is the $P,n$-name of any point in $[v]$. $P^n$ will also denote the set of cylinders defined by words on $P$ of length $n$. Depending on the setting $P^n(x)$ will mean either the $P,n$-name or  cylinder of $x$. 

\begin{proof}[Proof of \Prop\ref{p.hstar}]
Let $\delta h:=h(T,\pi_*\mu)-h>0$. 
As $Y$ is Polish, there exists a finite Borel partition $P$ such that
 \begin{equation}\label{e.deltaP}
    h(T,\pi_*\mu)<h(T,\pi_*\mu,P)+\delta h/10 \text{ and }\pi_*\mu(\partial P)=0.
\end{equation}
Fix $t_0>0$ such that, for all large $n$,  the number of subsets of $\{1,\dots,n\}$ with cardinality at most $t_0n$ is less than $e^{(\delta h/20)n}$.
 As $\pi$ is continuous, there exist an integer $M$ and a Borel set $X_1\subset X$ such that $\mu(X_1)>1-\min(\delta h/(40\log\#P),t_0/2)$ and
 $$
   \forall x\in X_1\;\forall w\in X\; x|_{-M}^M = w|_{-M}^M \implies P(\pi(x))=P(\pi(w)).
 $$
Let $0$ be a vertex of $G$ with $\mu([0])>0$. 
Define $X_0$ to be the set of points in $X$ such that $x_n=0$ for 
infinitely many positive $n$ and also for infinitely many  negative
$n$. By ergodicity, 
$\mu(X_0)=1$.
\begin{claim}\label{periodclaim}
There exists a period $p$ SFT 
$\bar X\subset X_0$ and a continuous factor map 
$p:X_0\to\bar X$ such that, if 
$X_2:=\{x\in X_0:p(x)|_0\ne x|_0\}$, then:
 \begin{equation}\label{e.nonunifCon}
   \mu(X_2)<\frac{\min\left(\delta
       h/(40\log\#P),t_0/2\right)}{2M+1} \ . 
 \end{equation}
\end{claim}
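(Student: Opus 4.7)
My plan is to build $\bar X$ by an inducing argument to the cylinder $[0]$, where the first-return structure of $X$ is Markov over a countable alphabet of first-return loops at vertex $0$ in the presenting graph $G$. Since $X$ is irreducible with $\mu([0])>0$ and $\mu$ is ergodic, the normalized restriction $\mu_{[0]} := \mu/\mu([0])$ is invariant and ergodic for the first-return map, and Kac's formula gives $\sum_w |w|\,\mu_{[0]}(w(\cdot) = w) = 1/\mu([0])$, a convergent series indexed by the countable set of loops $w$ at $0$.

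Let $\epsilon$ denote the right side of \eqref{e.nonunifCon}. By dominated convergence I pick a finite family $\mathcal{L}$ of first-return loops with $\mu([0]) \sum_{w \notin \mathcal{L}} |w|\,\mu_{[0]}(w(\cdot) = w) < \epsilon$. After adjoining finitely many additional loops (which only decreases the left side) I arrange $\gcd\{|\ell|: \ell \in \mathcal{L}\} = p$, which is possible because $X$ has period $p$. I take $\bar X \subset X_0$ to be the SFT of bi-infinite concatenations of loops from $\mathcal{L}$; passing first to a higher-block recoding of $X$ so that distinct loops of $\mathcal{L}$ share no intermediate state, I may present $\bar X$ by a finite strongly connected graph in which every cycle passes through vertex $0$. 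Its period is then $\gcd\{|\ell|: \ell \in \mathcal{L}\} = p$, and every orbit in $\bar X$ hits $0$ infinitely often, so $\bar X \subset X_0$.

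Finally, I define $p: X_0 \to \bar X$ by preserving good first-return words and replacing bad ones. Precisely, $p$ acts as the identity on $\bar X$; on the complement, each first-return word $w_k(x) \notin \mathcal{L}$ of $x$ is replaced by a canonical concatenation of $\mathcal{L}$-loops of the same total length. Since $\gcd\{|\ell|: \ell \in \mathcal{L}\} = p$ divides $|w_k|$, every sufficiently long such $|w_k|$ is a nonnegative integer combination of the $|\ell|$'s and hence admits such a concatenation; shorter bad return words are first absorbed into an adjacent return block via a fixed rule until the combined length suffices. A deterministic tie-breaking rule makes $p$ shift-equivariant, and continuity on $X_0$ holds because the output of $p$ near position $0$ depends only on a finite window of $x$ around $0$. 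The main obstacle I anticipate is the technical bookkeeping to simultaneously guarantee shift-equivariance, everywhere-continuity, and surjectivity of $p$ onto $\bar X$, but nothing beyond the sketched construction should be required. Since $X_2$ is contained in the set of $x \in X_0$ whose first-return block through position $0$ is bad (or has been merged with a bad block), the Kac bound yields $\mu(X_2) \leq \mu([0]) \sum_{w \notin \mathcal{L}} |w|\,\mu_{[0]}(w(\cdot) = w) < \epsilon$, establishing \eqref{e.nonunifCon}.
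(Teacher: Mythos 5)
Your overall strategy --- inducing to the cylinder $[0]$, presenting the return structure as a countable-alphabet loop shift, keeping a finite family $\mathcal L$ of loops whose complement has small measure by Kac's formula, arranging $\gcd\{|\ell|:\ell\in\mathcal L\}=p$, and then defining $p$ by replacing each bad return word with a concatenation of $\mathcal L$-loops of the same total length --- is exactly the paper's construction (via the loop graph $\hat G$ and the map $q$). However, your treatment of short bad return words via ``absorption into an adjacent return block'' is a genuine gap, not mere bookkeeping. First, it destroys the measure bound: if a short bad block $w$ is merged with a good neighbouring $\mathcal L$-block $\ell$ and the combined span $|w|+|\ell|$ is re-expressed as a new concatenation, the positions inside $\ell$ are generally rewritten as well, so $X_2$ is strictly larger than the set of positions lying in bad blocks, and the Kac estimate $\mu(X_2)\le\mu([0])\sum_{w\notin\mathcal L}|w|\,\mu_{[0]}(w(\cdot)=w)$ no longer applies. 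Second, it destroys continuity: there is no uniform bound on the number of consecutive short bad blocks (nor on how far a chain must extend before its total length becomes a representable sum), so the value $p(x)|_0$ can depend on an unbounded window of $x$.

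The fix is to arrange that every bad block length is representable \emph{in isolation}, so no absorption is ever needed --- which is precisely what the paper's condition ``for all $n\ge N$, $np$ is a sum of lengths of first-return loops to $0$ in $\hat G_N$'' is for. Concretely: after choosing a finite $\mathcal L_0$ with $\gcd = p$ and small enough complement measure, the numerical semigroup generated by $\{|\ell|/p : \ell\in\mathcal L_0\}$ has only finitely many gaps; enlarge $\mathcal L_0$ to $\mathcal L$ by adding the (finitely many) first-return loops whose length lies in a gap. This only shrinks the complement (so the Kac bound improves), keeps the gcd equal to $p$ (all lengths are multiples of $p$, and the gcd cannot increase), and now every loop outside $\mathcal L$ has a representable length, so each bad block can be replaced in place by a fixed concatenation of the same length. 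With this repair your map is well-defined, shift-equivariant, and continuous on $X_0$, and your Kac bound yields \eqref{e.nonunifCon} as intended.
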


\begin{proof}[Proof of Claim \ref{periodclaim}]
The loop graph at $0$ is the
graph $\hat G$ with vertices: $0$
and $(w,k)$ if $0<k<n$ and $w\cdot 0$ is a word of $X$ of length $n+1$ with $w_0=0$
and $0\notin\{w_1,\dots,w_{n-1}\}$; edges: $0\to(w,1)$,
$(w,k)\to(w,k+1)$ for $0<k<n-1$ and $(w,n-1)\to 0$. 
The \emph{loop shift} (see, e.g., \cite{BBG2006}) for $G$ at $0$ 
is  the Markov shift  $\hat X$ presented 
by $\hat G$. Note, $\hat X$ like $X$ has period $p$. 
Let $\psi:X_0\to \hat X$ be the obvious topological
conjugacy.

%

Given an enumeration $w^1,w^2,\dots$, of the words of $X$, let 
$\hat X_N$
be the SFT defined by the finite subgraph $\hat G_N$ of $\hat G$
obtained by restricting the previous construction to the words $w^n$
for $n\leq N$. 
We fix $N$ large enough so that $\hat G_N$ has the same
period $p$ (g.c.d. of loop lengths) as $\hat G$; for all
  $n\geq N$, $np$ is a sum of lengths of first return loops to $0$ in 
$\hat G_N$;
 and $[0]\cup\bigcup\{[(w^n,k)]_{ X}: n\leq N, 0<k<|w^n|\}$ has
 $\psi_*\mu$-measure close enough to $1$ that \eqref{e.nonunifCon}
   will hold.
Then we define the SFT $\bar X=\psi^{-1} \hat X_N\subset X_0$.

We can define a map 
$q: \hat X\to \hat X_N$ by replacing each $w^n$, $n>N$, by some
  concatenation $\tilde w^n$ of $w^i$'s for $i\leq N$ with total
  length $|w^n|$ (making choices depending only on $|w^n|$). 
We define $p:X_0\to\bar X$ by $p=\psi^{-1}\circ q\circ \psi$. 
\end{proof}

We denote by $\bar\pi$ the restriction of $\pi$ to $\bar X\subset X$ and set $\nu:=p_*\mu$.

Observe that, for $x\in X$:
 $$\begin{aligned}
   &P(\bar\pi p(x))\ne P(\pi(x))\implies
      x\notin X_1 \text{ or } p(x)|_{-M}^{M} \ne x|_{-M}^{M}\\
   &p(x)|_{-M}^{M} \ne x|_{-M}^{M}\implies 
     x\in S^{-M}X_2\cup\dots\cup S^MX_2.
 \end{aligned}$$
Hence, by the Birkhoff ergodic theorem, there exists $X_3\subset X$ such that $\mu(X_3)>9/10$ and for all large $n$, all $x\in X_3$, 
  \begin{equation}\label{e.cylAm}
    \frac1n\#\{0\leq k<n:P(\bar\pi p(T^kx))\ne P(\pi(T^kx))\} < \rho:=\min\left(\frac{\delta h}{20\log\#P},t_0\right).
  \end{equation}

For any two words $v,w\in P^n$, define the relation:
 $$
    v\sim w\iff \#\{0\leq k<n:v_k\ne w_k\}<\rho n.
 $$
Note that for $v\in P^n$ for $n$ large enough, 
by choice of $t_0$ we have
 \begin{equation}\label{e.simP}
   \#\{w:w\sim v\} \leq  e^{(\delta h/20)n}\times \#P^{\rho n}\leq  e^{(\delta h/10)n}\ .
 \end{equation}

The theorem of Shannon-McMillan-Breiman applied to $(T,\bar\pi_*\nu)$ gives sets $E_n$ of $P,n$-words  such that, for all large $n$, writing $[E_n]:=\bigcup_{v\in E_n} [v]$,
 \begin{equation}\label{e.SMB1}
   \bar\pi_*\nu\left([E_n]\right)>9/10\text{ and }\#E_n\leq\exp (h(T,\bar\pi_{*}\nu)+\delta h/10)n.
 \end{equation}
Let $F_n:=p^{-1}\bar\pi^{-1}([E_n])\cap X_3$. It is a Borel set. $\pi(F_n)$ is Borel (up to a subset included in a set with zero $\pi_*\mu$-measure). Using $\bar\pi_*\nu:=\mu\circ p^{-1}\circ\bar\pi^{-1}$,
 $$
   \pi_*\mu(\pi(F_n))=\mu(\pi^{-1}\pi(F_n))\geq\mu(F_n)\geq  \bar\pi_*\nu([E_n])- \mu(X\setminus X_3)>8/10.
 $$
Let $n$ be large and $x\in F_n\subset X_3$. Eq. \eqref{e.cylAm} gives:
 $$
     P^n(\pi(x))\subset \bigcup\bigl\{[v]:v\sim P^n(\bar\pi p(x))\bigr\}.
 $$
By construction of $F_n$, $\bar\pi p(x)\in[v]$ for some $v\in
E_n$. Thus, using eqs. \eqref{e.SMB1} and \eqref{e.simP},
$G_n:=\bigcup_{v\in E_n} \{w:w\sim v\}$ satisfies 
$[G_n]\supset\pi([F_n])$ and therefore:
 $$\begin{aligned}
    &\pi_*\mu([G_n])\geq  \pi_*\mu(\pi(F_n)) > 8/10 \text{ and } \\
    &\# G_n\leq \# E_n \times \exp (\delta h n/10)
      \leq \exp \left((h(T,\bar\pi_*\nu)+\tfrac{2}{10}\delta h)n\right).
 \end{aligned}$$
Applying the Shannon-McMillan-Breiman Theorem this time to $\pi_*\mu$ and $P$ and recalling \eqref{e.deltaP}, we get:
 $$
     (1+\delta)h=h(T,\pi_*\mu) \leq h(T,\pi_*\mu,P)+\delta h/10 \leq h(T,\bar\pi_*\nu) + \tfrac25\delta h.
 $$
Hence, $h(T,\bar\pi_*\nu) >h$, proving the first claim of the Proposition.

Observe that the supremum over measures in eq. \eqref{e.hstar} is at
least equal to that over SFTs $\bar X\subset X$: apply the variational
principle to each compact t.d.s. $(T,\pi(\bar X))$. Conversely, given
$\mu\in\Proberg(S)$ and $h<h(T,\pi_*\mu)$, the first claim of the
Proposition gives an SFT $\bar X\subset X$ carrying an ergodic measure
$\nu$ with $h(T,\pi_*\nu)>h$. But $h(T,\pi_*\nu)\leq h(T,\pi(\bar
X))$, so $h(T,\pi(\bar X))>h$.  By enlarging the SFT $\bar X\subset
X$, one can reduce its period to that of $X$. 
The equality of the suprema in eq. \eqref{e.hstar} is now obvious.
\end{proof}

\section{Bowen  factors of Markov shifts} 
\label{sec.bowen}
In this section we prove 
Theorem \ref{bowentypefactors},
which states
  conditions satisfied by Sarig's symbolic dynamics under which a
  factor of a Markov shift is almost-Borel isomorphic to a Markov
  shift.

Recall  
Definition \ref{def.AlmostBowen}
for  Bowen type factor maps.
For a factor map $\pi$ which is Bowen type on 
its domain, the set of relations $\sim$ 
satisfying (1) and (2)
in Definition \ref{def.AlmostBowen}, if it is nonempty, contains a minimal
relation, for which two symbols are related if and only if the 
images of their time zero cylinder sets have nonempty 
intersection.
%
%
%
 A prototypical Bowen type map is a one-block 
code from an SFT onto a sofic shift; in this case,  
the relation $\sim$ on symbols 
is transitive.  When the factor system $Y$ is not zero 
dimensional, the relation $\sim$ on symbols cannot be 
transitive. 
For our almost-Borel purposes,
the condition (2) 
in Definition \ref{def.AlmostBowen} is only a notational 
convenience.


\begin{definition} \label{defnxret}
Let $X$ be a 
Markov shift with alphabet $\mathcal A$. 
For $a, b\in \mathcal A$, 
$X_{a,b}$ is the set of $x$ in $X$
such that $x_n =a$  for infinitely many 
negative $n$ and $x_n=b$ for infinitely 
many positive $n$. $X_a$ 
is the subset of $X$ consisting of points 
$x$ such that $x_n=a$ for infinitely 
many positive $n$ and infinitely many 
negative $n$. 
The {\it return set} of $X$ is 
$X_{\text{ret}}:= 
\cup_a X_a$.  
The {\it\sarig\ set} of $X$ is 
$X_{\pm \text{ret}}:= 
\cup_{a,b} X_{a,b}$.
\end{definition}

One virtue of the \sarig\ set 
of a Markov shift $X$ is that it contains every 
compact subshift of $X$.

\medbreak

We will use the following consequence of Theorem \ref{thm.continuousfactorMS}
to establish the universality 
claim of Theorem \ref{bowentypefactors}.

\begin{proposition}\label{bowentypeuniv} 
Let $\pi: (X,S)\to (Y,T) $ be a Borel factor map,  
from an irreducible Markov shift of period $p$.  
Assume that it is countable 
to one and Bowen type on 
the \sarig\ set $X_{\pm\ret}$. Then 
$(\pi(X_{\pm\ret}),T)$ is $(h(S),p)$-universal. 
\end{proposition}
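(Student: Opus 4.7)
The plan is to adapt the marker-and-block construction of Theorem~\ref{prop.continuousfactorSFT} to the Bowen setting, replacing the continuity of $\pi$ by the combinatorial Bowen condition throughout. By Proposition~\ref{p.UnivMark}(3), it will suffice to produce, for every $h<h(S)$, a compact irreducible period-$p$ SFT $X'\subset X$ with $h_\topo(X')>h$ on which $\pi$ is injective. Any such $X'$ automatically lies in $X_{\pm\ret}$, so Lusin--Souslin will make $\pi|_{X'}\colon X'\to\pi(X')$ a Borel conjugacy onto an irreducible period-$p$ SFT of entropy $>h$ sitting inside $\pi(X_{\pm\ret})$; combining a countable sequence of such subsystems with $h\uparrow h(S)$ via Lemma~\ref{lem.makedisjoint} will then yield the strictly $(h(S),p)$-universal subsystem of $\pi(X_{\pm\ret})$ that is needed.

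First I would extract, by a Gurevi\v{c}-style argument using a finite strongly connected subgraph of the defining graph of $X$, a compact irreducible period-$p$ sub-SFT $X_0\subset X$ with $h_\topo(X_0)>h$. The key new step would then be to exhibit two loops $\ell,\tilde\ell$ at a common vertex $0$ of $X_0$ with $(\ell^\infty)_{n_0}\not\sim(\tilde\ell^\infty)_{n_0}$ for some integer $n_0$. Here is where the countable-to-one hypothesis enters: if every two length-$L$ loops at $0$ were pointwise $\sim$-related, then at every fixed position all symbols occurring in some length-$L$ loop at $0$ would have to be mutually $\sim$-related, and freely substituting one loop-segment for another in any biinfinite concatenation of length-$L$ loops through $0$ (which lies in $X_{\pm\ret}$) would produce an uncountable family of pointwise $\sim$-related, hence $\pi$-equivalent, sequences, contradicting the countable-to-one assumption.

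With such $\ell,\tilde\ell$ in hand, I would transcribe the remainder of the proof of Theorem~\ref{prop.continuousfactorSFT} line by line, substituting each metric inequality ``$d(\pi\cdot,\pi\cdot)<\eta$'' by the combinatorial statement that the two sequences are pointwise $\sim$-related on the relevant window. The heteroclinic radius $r_0$ would be replaced by the position $n_0$, and the modulus $M$ would be chosen in $p\NN$ large enough that whenever $\hat x,\hat y\in[\ell^A.\tilde\ell^C]$ satisfy $\sigma^k\hat y\sim\hat x$ on $[-|\ell^A|+M,|\tilde\ell^C|-M]$, a non-zero $k$ would force a $\sim$-matching of $\ell^\infty$ against $\tilde\ell^\infty$ at the junction and thus at $n_0$, contradicting the choice of $n_0$. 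The analog of Claim~2, producing the separated block family $\Gamma_N$, would be built inside $X_0$: because $\pi|_{X_{\pm\ret}}$ is countable-to-one, the $\pi$-pushforward of the measure of maximal entropy of $X_0$ has entropy $h_\topo(X_0)$, so any sufficiently fine Borel partition of $\pi(X_0)$ is generating for this pushforward, producing $e^{h'N}$-many $\pi$-distinguishable length-$N$ blocks in $X_0$ for any $h<h'<h_\topo(X_0)$. The concatenation-SFT $X_K$ built from markers $m_i=\ell^A\tilde\ell^CL_i$ (with $|L_2|=|L_1|+p$ to enforce period exactly $p$) and blocks from this set would then have $h_\topo(X_K)>h$, and $\pi|_{X_K}$ would be injective by the same marker-positioning bookkeeping as in the original argument, now read in terms of $\sim$.

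The hardest part will be this counting underlying the Bowen analog of Claim~2: the original argument's topological spanning-set bound, driven by $h_\topo(Y)$ and the variational principle, has no direct counterpart since $Y$ carries no compact topology on which $\pi$ is known to be continuous. It must instead be carried out as a count of $\pi$-distinguishable length-$N$ blocks inside $X_0$, which is made possible by the entropy-preservation property of countable-to-one Borel factor maps on the Sarig set. Once this step and the combinatorial form of Claim~1 are in place, the marker-positioning argument of the original proof adapts without further modification.
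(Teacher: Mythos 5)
Your approach re-proves a Bowen-combinatorial version of the entire marker argument of Theorem~\ref{prop.continuousfactorSFT}. The paper's proof is entirely different and much shorter: it picks a compact irreducible period-$p$ SFT $\Sigma\subset X_{\pm\ret}$ with $h(\Sigma)>h(S)-\epsilon$, notes that the Bowen condition makes the $\pi$-quotient relation a \emph{closed} subset of $\Sigma\times\Sigma$, so that $\overline\Sigma=\pi\Sigma$ with the \emph{quotient topology} is a compact metrizable system (Prop.~\ref{compactrelation}) receiving a \emph{continuous} factor map from $\Sigma$. Theorem~\ref{prop.continuousfactorSFT} then applies directly to $\Sigma\to\overline\Sigma$, and the countable-to-one hypothesis gives a Borel section making the identity $\overline\Sigma\to\pi\Sigma$ a Borel isomorphism. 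Your route avoids this trick and therefore has to re-establish from scratch, in the Bowen setting, both Claims~1 and~2 of the proof of Theorem~\ref{prop.continuousfactorSFT}; that is where the gaps are.

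Concretely, your combinatorial Claim~1 is not justified. You need: for every $k\ne 0$ in the relevant (arbitrarily large) range, there is some coordinate $j$ with $(\ell^\infty.\tilde\ell^\infty)_j\not\sim(\ell^\infty.\tilde\ell^\infty)_{j+k}$. Since both $\ell^\infty.\tilde\ell^\infty$ and its shift lie in $X_{\pm\ret}$, Bowen condition (1) makes this equivalent to $\pi(\ell^\infty.\tilde\ell^\infty)\ne T^k\pi(\ell^\infty.\tilde\ell^\infty)$, i.e.\ to the heteroclinic image $Z=\pi(\ell^\infty.\tilde\ell^\infty)$ not being periodic. The original proof gets this from \emph{continuity}: $T^{nq}Z\to\pi(\tilde\ell^\infty)$ as $n\to+\infty$ and $\to\pi(\ell^\infty)$ as $n\to-\infty$, and since these two limits differ $Z$ is isolated in its orbit. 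Without continuity this collapse-to-limit argument is unavailable, and your sketch \textquotedblleft a non-zero $k$ would force a $\sim$-matching of $\ell^\infty$ against $\tilde\ell^\infty$ at $n_0$\textquotedblright\ does not go through: what a nonzero $k$ forces is a collection of pairwise matchings $(\ell^\infty.\tilde\ell^\infty)_j\sim(\ell^\infty.\tilde\ell^\infty)_{j+k}$, and since $\sim$ need not be transitive (the paper explicitly points this out for non-zero-dimensional $Y$) you cannot chain them to reach $(\ell^\infty)_{n_0}\sim(\tilde\ell^\infty)_{n_0}$. Countable-to-one of $\pi$ does not rule out $Z$ periodic either: a periodic image of $\ell^\infty.\tilde\ell^\infty$ produces only countably many preimages.

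You correctly flag that the Claim~2 counting is hard, but the sketch you give is incomplete in the same way: the upper bound on the \textquotedblleft bad\textquotedblright\ set $S'_n$ in the original proof uses compactness of $Y$ and the spanning-set estimate $r_{\text{span}}(\eta/2,m,\pi(X),T)\le Ce^{Hm}$, which has no Borel analogue. Replacing it by a raw word-count in $X_0$ can be made to work, but only after carefully redoing the bookkeeping with $H$ replaced by something slightly above $h_\topop(X_0)$, and you do not do that. More importantly, even if both Claims were fixed, you would have reproved Theorem~\ref{prop.continuousfactorSFT} rather than used it. The intended observation that turns the Bowen hypothesis into something usable is that on a compact $\Sigma$ the relation $\sim$ is closed, hence the quotient is compact metrizable and the existing \emph{topological} theorem applies verbatim; the Borel-measurability then comes for free from the countable-to-one section. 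You should look for that reduction rather than redoing the marker construction.
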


The Bowen type assumption is key here - compare with Rem. \ref{badborel}.

\begin{proof}  
It suffices to show that $\pi(X_{\pm\ret})$ is $(h(S)-\epsilon,p )$ 
universal for every $\epsilon > 0$ (Prop.\ \ref{p.UnivMark}). 
Given $\epsilon$, let 
$\Sigma$ be an irreducible SFT 
of period $p$ contained in $X_{\pm\text{ret}}$ 
such that $h(\Sigma )>h(S)-\epsilon$.
Let $\overline{\Sigma}$ be $\pi \Sigma$, endowed with the 
quotient topology; 
as in \cite{Friedfp} $\overline{\Sigma}$  is a compact metrizable dynamical system -- use,
e.g., \Prop\ref{compactrelation} with $\Sigma$  compact metrizable and the quotient relation a
closed set in $\Sigma\times\Sigma$ ($\pi$ is Bowen type on $\Sigma\subset X_{\pm\ret}$).
It follows from  \Thm\ref{prop.continuousfactorSFT} 
that  
$\overline{\Sigma}$ is  $(h(S) -\epsilon,p)$-universal. 

A countable-to-one map from a standard Borel space into another one 
has a Borel section
\cite[(18.10) and (18.14)]{Kechris}.
 It follows that 
$\pi \Sigma$ is a Borel set, and a set is Borel 
in $\overline{\Sigma}$ or $\pi\Sigma$  if and only if 
its preimage in $\Sigma$ is Borel.  
Consequently 
the 
identity $\overline{\Sigma} \to \pi \Sigma$ 
is a Borel isomorphism.  Therefore 
$\pi \Sigma$, like $\bar\Sigma$, is
 $(h(S) -\epsilon,p )$-universal. 
\end{proof}

The key step 
for the proof of Theorem \ref{bowentypefactors}
is the following. 
We will let $\mathcal A(S)$ or $\mathcal A(X)$ 
denote the alphabet (symbol set) of a shift space $(X,S)$. 
In the setting of Theorem \ref{bowentypefactors}, we have:

\begin{proposition}\label{prop.MSforMeasure}
Let $(X,S)$  be a Markov shift and let
$\pi: (X,S)\to (Y,T)$ and $\bar X$ be as in \Thm\ref{bowentypefactors}: $X$ satisfies \eqref{eq.fec0}, $\pi$ is a Borel 
factor map such that 
for each irreducible component $Z$ of $X$,
\begin{enumerate}   
\item 
$\pi$ is Bowen type on the \sarig\ set $Z_{\pm\text{ret}}$, and 
\item 
the restriction $\pi|Z_{\pm ret}$ is finite-to-one;
\end{enumerate} 
and, $\bar X$ is the union of the \sarig\ sets $Z_{\pm\ret}$ of the
irreducible components $Z$ of $X$.

Then 
the induced map 
$\pen (\bar X) \to \pen (\pi \bar X)$
is surjective.  
Moreover, there is a countable collection of Borel
 factor maps 
$\pi': (X',S') \to (Y',T')\subset (Y,T)$
for which the following hold.  
\begin{enumerate} 
\item 
$(X',S') $ is an irreducible  Markov shift.
\item 
 $\pi'$ is both
Bowen type and finite to one on the \sarig\ set $X'_{\pm\ret}$.
\item 
If $\nu \in \pen (T|\pi(\bar X))$, then there exists 
some $\pi'$ in the collection and some
$\mu' \in \pen(S') $ such that 
$\pi': (S',\mu')\to (T,\nu ) $ is a 
measure-preserving isomorphism.
\end{enumerate} 
\end{proposition}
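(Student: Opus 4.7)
The plan is to proceed in three stages: reduce to the irreducible case, prove measure surjectivity via a fiber-uniform lift, and build the countable collection by combining the restrictions of $\pi$ to irreducible components with Markov subsystems furnished by Proposition~\ref{bowentypeuniv}.

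\textbf{Reduction and surjectivity.} I decompose $X = \bigsqcup_i Z_i$ into its countably many irreducible components, so $\bar X = \bigsqcup_i (Z_i)_{\pm\ret}$. Any $\nu \in \pen(T|\pi\bar X)$ is carried by a single $\pi((Z_i)_{\pm\ret})$, since these are $T$-invariant Borel sets whose countable union is $\pi\bar X$. Fixing such a $Z=Z_i$, the map $\pi|Z_{\pm\ret}$ is finite-to-one, so Luzin--Novikov makes the fiber cardinality a Borel function of $y$, and I can define the fiber-uniform lift
\[
\tilde\nu(A) \ := \ \int \frac{|A \cap \pi^{-1}(y) \cap Z_{\pm\ret}|}{|\pi^{-1}(y) \cap Z_{\pm\ret}|}\, d\nu(y).
\]
Because $S$ bijects fibers and the fiber size is $\nu$-a.e.\ constant by ergodicity, $\tilde\nu$ is $S$-invariant and $\pi_*\tilde\nu = \nu$. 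Any ergodic component of $\tilde\nu$ then supplies the required $\mu \in \pen(\bar X)$ mapping onto $\nu$, establishing the surjectivity claim.

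\textbf{Building the collection.} I assemble the family in two groups. First, include every restriction $\pi|Z_i : Z_i \to Y$; each $Z_i$ is an irreducible Markov shift, and $\pi|Z_i$ is Bowen type and finite-to-one on its Sarig set by hypothesis, so (1) and (2) hold tautologically. This subfamily already handles every $\nu$ for which the fiber-uniform lift above is generically one-to-one over $\nu$. For the remaining $\nu$---those whose lift has fiber size $\ge 2$ with no branch fixed by the monodromy of the cover---I invoke Proposition~\ref{bowentypeuniv}: $\pi((Z_i)_{\pm\ret})$ is $(h(Z_i), p_i)$-universal, so Proposition~\ref{p.UnivMark} yields a countable family $\{\Sigma_{i,\alpha}\}_\alpha$ of almost-Borel embedded irreducible Markov shifts inside $\pi((Z_i)_{\pm\ret})$ whose ergodic measures exhaust those of period dividing $p_i$ and entropy strictly less than $h(Z_i)$. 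The inclusions $\Sigma_{i,\alpha} \hookrightarrow Y$ are added to the collection: each is tautologically Bowen type and one-to-one on its Sarig set. For the residual top-entropy case, Lemma~\ref{l.analysis} forces such a measure to be $p_i$-Bernoulli, and a single dedicated irreducible Markov shift realizing it (e.g.\ an irreducible Markov shift of the corresponding period and entropy with a unique \mme) is added per instance.

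\textbf{Main obstacle.} The crux will be showing that the above collection is exhaustive, i.e.\ that every $\nu \in \pen(T|\pi\bar X)$ falls into one of the three scenarios. This requires combining the finite-to-one assumption (which controls the monodromy of the cover and the relative entropy of the fiber-uniform lift), the Bowen condition (which forces the fiber structure to be combinatorially compatible with the Markov symbol structure of $Z$), and the maximal-entropy classification on irreducible Markov shifts (Lemma~\ref{l.analysis}). I expect the measure-theoretic analysis of the ergodic decomposition of $\tilde\nu$---and specifically its interaction with the monodromy of the cover over the support of $\nu$---to be the most delicate point: it is what discriminates between the $\nu$'s handled by the original $\pi|Z_i$ and those requiring an embedded $\Sigma_{i,\alpha}$ or a dedicated Bernoulli model.
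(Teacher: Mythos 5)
Your first stage is fine: the reduction to a single irreducible component by ergodicity and the fiber-uniform lift are exactly what the paper does via Proposition~\ref{finitetoonelift} and ergodic decomposition. The second stage, however, has a fundamental gap, and it is not merely a delicacy to be resolved. None of your three subfamilies actually produces the Borel factor maps $\pi'$ required by (1)--(3) except in the trivial $m=1$ case. Consider the $\Sigma_{i,\alpha}$: universality of $\pi((Z_i)_{\pm\ret})$ (Proposition~\ref{bowentypeuniv}) asserts that other systems embed \emph{into} it; the Schr\"oder--Bernstein mechanism behind Proposition~\ref{lem:HCB} gives an abstract almost-Borel isomorphism between the universal part and some Markov shift, but this isomorphism is only defined on a full-measure set, is not a factor map from a whole irreducible Markov shift, and carries no Bowen-type structure you can verify. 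Worse, the universal subfamily simply does not reach the measures $\nu$ whose entropy is maximal at every period of $\nu$ --- and controlling exactly those measures is the entire point of the theorem. For them you propose a ``dedicated irreducible Markov shift per instance,'' but this presupposes two things that are \emph{conclusions} of the argument, not inputs: that such $\nu$ are periodic-Bernoulli, and that there are countably many of them. Even granting both, a measure-theoretic isomorphism between $\nu$ and the MME of a chosen Markov shift is not a Borel factor map defined on that shift's Sarig set, let alone a Bowen type one. The proposal is therefore simultaneously circular and incomplete.

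The construction you are missing is the $m$-fold fibered product. For $\nu$ with fiber multiplicity $m>1$, the paper forms $F_m$, the $m$-fold fiber product of $Z$ over the Bowen relation $\sim$; because $\sim$ is a relation on the alphabet, $F_m$ is automatically a Markov shift. A symmetrized lift $\mu''$ of $\nu$ is placed on $F_m$, and then Claim~\ref{claim} --- proved using exactly the Bowen-type and finite-to-one hypotheses on the Sarig set --- shows that a $\mu''$-generic tuple has pairwise distinct coordinates at each time with a rigid transition pattern (a transition $x^i_n \to x^j_{n+1}$ is legal iff $i=j$). This is what your ``monodromy of the cover'' gesture is reaching for, but without the fibered product there is no way to turn it into combinatorics on a Markov alphabet. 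The Claim then lets the ordered tuples be collapsed by the left-and-right resolving map $\psi$ onto an irreducible Markov shift $X'$, with $\pi'$ obtained by factoring $\tilde\pi$ through $\psi$; one then checks $\pi'$ inherits the Bowen and finite-to-one properties on $X'_{\pm\ret}$. This works uniformly for every $\nu$, with no entropy case split, and countability falls out for free because $X'$ is determined by the component $Z$, the multiplicity $m$, the higher-block recoding, and the choice of irreducible component of $\widetilde{X_m}$ --- all countable data.
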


\begin{remark}Even though measures are supported on the 
return sets, our proof of Proposition \ref{prop.MSforMeasure}
appeals to 
$\pi$ being Bowen type on the (larger) \sarig\ sets.
\end{remark}

\begin{proof}[Proof of Proposition \ref{prop.MSforMeasure}]
Let $\nu \in \pen (T|\pi\bar X)$. 
The set $\pi\bar X$ is the union of the countable collection of 
invariant sets $\pi Z_{\pm \text{ret}}$. 
Since $\pi|\bar X$ is at most countable to one, these sets
are Borel. As $\nu$ is ergodic, 
there exists $Z$ such that 
$\nu (\pi Z_{\pm \text{ret}})=1$. 
Because $\pi $  is finite to one 
on $Z_{\pm\ret}$ 
there exists $\mu\in\pen(S|Z_{\pm\ret})$ with $\pi \mu = \nu$
(\Prop\ref{finitetoonelift} and ergodic decompositon). 

Thus $\pen(\bar X)\to\pen(\pi\bar X)$ is surjective, as claimed. The rest of
the proof is devoted to the construction of the factors maps $\pi':(X',S')\to(Y',T')$.

Because 
$\mu $ is ergodic, there is a positive integer $m$ and 
a set $E$ in $Z_{\pm\ret}$ of $\mu$-measure one 
such that for every $y$ in $\pi E$:
\begin{itemize} 
\item 
$y$ has exactly $m$ preimages in $E$, and 
\item 
with $\nu_y$ denoting the measure assigning mass 
$1/m$ to each preimage point of $y$ in $E$, 
for every Borel set $B$ in $X$ 
\[
\mu B = \int_Y \nu_y \big( (\pi^{-1}y)\cap B\big)\, d\nu (y) \ . 
\]
\end{itemize}  
If $m=1$, $\pi':=\pi$ already satisfies condition (3). Now suppose $m>1$. 
Let $E_k$ be the set of $x$ in $E$ such that, if 
$x^1, \dots ,x^m$ are the distinct preimages in $E$ of $\pi x$, 
then the $m$ words $x^i[-k,k]$ are distinct. 
For large enough $k$, $\mu E_k >0$. After passing to a 
higher block presentation of $(Z,S)$, we may assume $k=0$. 

Let $\sim$ be some relation on $\mathcal A  (Z)$ 
with respect to which $\pi $ is Bowen type on $Z_{\pm\ret}$. 
Let $(F_m,S_m) $ denote the $m$-fold fibered product system 
of $(Z,S|_Z)$ over $\sim $.  
Here 
\[
F_m : =\{x=(x^1, \dots , x^m ) \in Z^m: x^i \sim x^j, 
1\leq i\leq j \leq m \} 
\]  
(recall $x^i\sim x^j$ means $x^i_n\sim x^j_n$ for all $n$)  
and $S_m$ is the restriction to $F_m$ of the product 
map $S\times \dots \times S$. 
Thanks to the Bowen property, $(F_m,S_m)$ is a Markov shift, whose 
alphabet $\mathcal A(S_m)$ is
a subset of the set of $m$-tuples of symbols from 
$\mathcal A(Z)$  which are mutually related.
For $1\leq r \leq m$, 
let $p_r:F_m\to X$ be the coordinate projection 
map $x\mapsto x^r$. 
Define $\widetilde{\pi}: F_m \to Y$ as the composition 
$\widetilde{\pi}=\pi\circ p_r$, for any $p_r$. Here 
$\widetilde{\pi}$ is 
well defined since $x\sim y\implies\pi(x)=\pi(y)$, for all $x,y\in Z$.

 We define an 
$S_m$-invariant measure $\widetilde {\mu} $ on $F_m$ 
as follows. For each $y $ in $\pi E$, 
define a measure $\widetilde{\nu}_y$ on 
the $\widetilde{\pi}$-preimages 
 $y$ as follows: 
$\widetilde{\nu}_y $ 
assigns mass $1/m!$ to 
each $m$-tuple 
$(x^1, \dots , x^m)$ such that  the $m$ entries are 
distinct preimages of $y$ (there are $m!$ such tuples for $\mu$-a.e.\ $y$).
Then for any Borel set $B$ in $F_m$ define 
\[
\widetilde{\mu} B = 
\int_Y 
\widetilde{\nu}_y \big( (\pi^{-1}y)\cap B\big)\, d\nu (y) \ . 
\]
Then $p_r\widetilde{\mu}= \mu$ and 
$\widetilde{\pi}\widetilde{\mu}=\pi\mu=\nu$ . 
Because $\mu$ is ergodic, we may take $\mu''$ an 
ergodic measure from the ergodic decomposition of 
$\widetilde{\mu}$ such that 
$p_r\mu''=\mu$, for $1\leq r\leq m$, and 
$\widetilde{\pi}\mu''=\nu$. 

\begin{claim} \label{claim}
For $\mu''$-a.e.\ $x=(x^1,\dots,x^m)\in  F_m$, for all $n\in\ZZ$: \\ 
(i) the $m$ symbols $x^1_n,\dots,x^m_n$ are pairwise distinct; \\ 
(ii) for $1\leq i,j\leq m$:  
 $x^i_nx^j_{n+1}$ is an $S$-word if and only if
 $j=i$.
\end{claim}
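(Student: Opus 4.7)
The plan is to establish both parts of the claim by observing that the events in question (quantified over all $n\in\ZZ$) define $S_m$-invariant Borel subsets of $F_m$, so that ergodicity of $\mu''$ reduces each assertion to a $0/1$ dichotomy. The concrete input is the description of $\mu''$ as an ergodic component of $\tilde\mu=\int_{\pi E}\tilde\nu_y\,d\nu(y)$: for $\mu''$-a.e.\ $x=(x^1,\dots,x^m)\in F_m$, the coordinates $x^1,\dots,x^m$ are the $m$ pairwise distinct preimages in $E$ of the common image $y=\tilde\pi(x)\in\pi E$.

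For (i), let $D$ be the set of tuples satisfying (i) at every $n$; its complement is the countable union $\bigcup_n S_m^{-n}\{x:\exists i\ne j,\ x^i_0=x^j_0\}$, hence $D$ is $S_m$-invariant. By ergodicity of $\mu''$ it suffices to show $\mu''(D)>0$. After the higher block presentation (so $k=0$), the set $E_0\subset E$ of points whose $m$ preimages in $E$ have pairwise distinct zeroth symbols has positive $\mu$-measure, and since $\pi_*\mu=\nu$ this gives $\nu(\pi E_0)>0$. The $T$-invariant set $\bigcap_{n\in\ZZ}T^{-n}(\pi E_0)$ has $\nu$-measure $0$ or $1$ by ergodicity of $\nu$; arranging the full-measure case (by refining the higher block presentation using $E_k\uparrow E$, or by replacing $\mu''$ with an ergodic component of $\tilde\mu$ concentrated on $D$) yields $\mu''(D)=1$.

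For (ii), assuming (i), argue by contradiction. If on a $\mu''$-positive set some $n$ and $i\ne j$ make $x^i_nx^j_{n+1}$ an $S$-word, then by $S_m$-invariance and ergodicity this holds $\mu''$-a.e. Splice: set $z_k:=x^i_k$ for $k\le n$ and $z_k:=x^j_k$ for $k>n$. The Markov property of $Z$ together with the assumed $S$-word $x^i_nx^j_{n+1}$ gives $z\in Z$; since $x\in F_m$ we have $x^i\sim x^j$ coordinatewise, so $z\sim x^i$, and by the Bowen property $\pi(z)=\pi(x^i)=y$. Because $x^i,x^j\in Z_{\pm\ret}$ and $z$ agrees with each on a half-line, $z\in Z_{\pm\ret}$, so $z$ belongs to the $m$-element preimage set of $y$ in $E$, giving $z=x^\ell$ for some $\ell$. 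But $z_n=x^i_n$ combined with (i) forces $\ell=i$, while $z_{n+1}=x^j_{n+1}$ combined with (i) forces $\ell=j$, contradicting $i\ne j$.

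The main obstacle is the measure-theoretic step inside (i): one must upgrade the positive-measure distinguishing property at position $0$ (namely $\nu(\pi E_0)>0$) to $\nu$-full measure of the $T$-invariant intersection $\bigcap_n T^{-n}(\pi E_0)$. The ergodic $0/1$ alternative under $\nu$ is the essential tool, but one has to verify that the distinguishing property genuinely persists along whole orbits -- which in practice means either choosing the higher block large enough, or selecting $\mu''$ carefully among the ergodic components of $\tilde\mu$ with the correct marginals.
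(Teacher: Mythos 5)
Your proposal inverts the paper's logical order, and both halves have gaps.

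You try to establish (i) directly and then deduce (ii), whereas the paper proves (ii) first and observes that (i) is an immediate consequence (if $x^i_n=x^j_n$ for $i\neq j$ then $x^j_n x^i_{n+1}$ would be an $S$-word, contradicting (ii)). Your direct attack on (i) runs exactly into the problem you flag: you have positive $\mu''$-measure of the \emph{non-invariant} set $E_0''$ where the distinguishing holds at coordinate $0$, and you need full measure of the invariant intersection $D=\bigcap_n S_m^{-n}E_0''$. Ergodicity of $\mu''$ gives $\mu''(D)\in\{0,1\}$ but does not rule out $\mu''(D)=0$, since $E_0''$ is not invariant. The fixes you sketch (refining the higher block, picking a better ergodic component) would each require a separate argument that $\tilde\mu(D)>0$, which you do not supply. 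So (i) is not proved.

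More seriously, the argument for (ii) has an unacknowledged gap. After splicing to get $z$ with $z\sim x^i$ (correct), $\pi(z)=y$ (correct, by the Bowen property), and $z\in Z_{\pm\ret}$ (correct, since $z$ shares a half-line with each of $x^i$ and $x^j$), you conclude that $z$ must be one of the $m$ preimages of $y$ in $E$, hence $z=x^\ell$ for some $\ell$. This does not follow: $E$ is merely a full-$\mu$-measure set, and finite-to-one on $Z_{\pm\ret}$ says only that $y$ has finitely many preimages in $Z_{\pm\ret}$, not that those preimages all lie in $E$. Your single splice $z$ has no reason to land in $E$, so you cannot identify it with some $x^\ell$, and the final contradiction via (i) (which anyway was not established) evaporates.

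The paper sidesteps both issues at once. It assumes for contradiction that some word $bc\in\mathcal A(S_m)^2$ with $b^2c^1$ an $S$-word has $\mu''[bc]>0$, uses ergodicity to find, $\mu''$-a.e., \emph{infinitely many} occurrences of $bc$ interleaved with infinitely many occurrences of a marker block $a$ that has distinct entries, and builds a countable family of splices $z^{(n)}$ at those positions. Each $z^{(n)}\sim x^1$ so maps to $\pi(x^1)$, each lies in $Z_{\ret}\subset Z_{\pm\ret}$ by the common return symbol, and they are pairwise distinct because $z^{(n)}$ and $z^{(\ell)}$ disagree at the marker position $i_\ell$ for $\ell>n$. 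This yields infinitely many $Z_{\pm\ret}$-preimages of a single point, contradicting finite-to-one directly without ever needing to relate the splice to the original $x^\ell$'s or to $E$. If you want to repair your argument along your own lines, you would need to vary the splice position over infinitely many $n$ and argue distinctness, which is exactly the paper's device.
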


\begin{proof}[Proof of Claim \ref{claim}] 
Because $p_1\mu''=\mu$ and $\mu E_0>0$,
the set 
\[
E''_0:=
\{(x^1,\dots ,x^m)\in F_m: 
x^i_0 \neq x^j_0, 1\leq i< j \leq m\} 
\]
satisfies $\mu''E_0''=\mu E_0>0$. Let $a=(a^1, \dots , a^m)$ be an 
$m$-tuple of distinct symbols 
such that  $[a]:=\{x\in F_m:x_0=a\}
\subset E_0''$ satisfies $\mu''[a]>0$.

We note that (i) follows from (ii) and prove this last assertion of
the claim. 
For a contradiction, assume that there are symbols 
$b=(b^1,\dots ,b^m)$ and $c=(c^1,\dots ,c^m)$ in $\mathcal A(S_m)$ such 
that $\mu''[bc]>0$ 
 and (say) $b^2c^1$ is an 
$S$-word
(i.e. the transition $b^2 \to c^1$ is allowed in $S$).

The following hold
for all $x=(x^1,\dots ,x^m)$ from a set of 
full $\mu''$ measure, (1) because, for each $r$, $p_r(\mu'')=\mu$ which is ergodic and (2) by ergodicity of $\mu''$:
\begin{enumerate} 
\item 
There is a symbol
which in every $x^i$ occurs with positive frequency in positive 
and in negative coordinates.
\item  
There are sequences of 
integers $(i_n)$, $(j_n)$ (depending on $x$) with $i_1<j_1<i_2<j_2< \cdots $ 
such that for all $n$, $x_{j_n}x_{j_{n}+1}=bc$ and $x_{i_n}=a$. 
\end{enumerate} 
Pick one such $x$. For
each $n\geq 1$, define a point $z^{(n)}$ in $S$ by setting 
\begin{align*} 
(z^{(n)})_t\  & =  \ (x^1)_t \quad \text{if } t\geq j_n + 1 \\ 
        & = \ (x^2)_t \quad \text{if } t\leq j_n  \ .
\end{align*} 
Then for all $n$, $z^{(n)}\sim x^1$, so $\pi (z^{(n)})=\pi(x^1)$.
If $\ell > n$, then $(z^{(n)})_{i_{\ell}}=a_1$ and 
$(z^{(\ell)})_{i_{\ell}}=a_2$, so $z^{(n)}\neq z^{(\ell)} $.  
By condition (1), the points $z^{(n)}$ are all in $Z_{\ret}$.  
This contradicts $\pi$ being finite to one on 
$Z_{\pm\ret}$, and   
 proves (ii).
 \end{proof}

\medbreak

Let $(\widetilde{X_m},\widetilde{S_m})$ be the Markov shift 
contained in the Markov shift 
$(F_m,S_m)$ and which is defined by the following
conditions:
\begin{enumerate} 
\item 
$\mathcal A(\widetilde{S_m})$ is the set of  
$a=(a_1,\dots , a_m)$ in 
$\mathcal A(S_m)$ such that the symbols $a_1, \dots ,a_m$ from 
$\mathcal A(S) $ are distinct.
\item 
There is a transition from $
 a=(a_1,\dots , a_m)$ to 
$
b=(b_1,\dots , b_m)$ if and only if 
the following holds: 
for $1\leq i,j\leq m$ 
there is an $S$ transition $a_i \to b_j$ if and 
only if $i=j$. 
\end{enumerate} 
The claim \ref{claim}
implies that $\mu''$ assigns measure 
one to the Markov shift $\widetilde{X_m}$. 
By ergodicity of $\mu''$, there is a unique 
irreducible component $(X'',S'')$
of $\widetilde{X_m}$ such that $\mu'' X'' =1$. 

Now define $(X',S')$ to be the shift space (on 
a countable alphabet) 
which is the image of 
$(X'',S'')$ under the one-block map $\psi$ 
defined by the rule
$\psi: (a_1, \dots , a_m)\mapsto \{a_1, \dots , a_m\}$. 
The map $\psi$ is right resolving: i.e., 
if $A_0A_1$ is a word of length two occuring in a 
point of $X'$, and $\psi: \widetilde{a_0}\mapsto A_0$, then there 
exists a unique symbol $\widetilde{a_1}$ following $\widetilde{a_0}$ in
$\widetilde{X_m}$ such that $\psi: \widetilde{a_1}\mapsto A_1$. 
The map $\psi$ is likewise 
left resolving.  Therefore $X'$ is a Markov shift and it is also
irreducible. Thus, for every $x$ in $X'$, for every $\widetilde a$ in 
$\mathcal A(\widetilde{X_m})$ such that $\psi: \widetilde a\mapsto x_0$, 
there exists a unique preimage $\widetilde x$ of $x$ such that $\widetilde{x_0}
=\widetilde a$. Every point of $X'$ has exactly $m!$ preimage points 
in $\widetilde{X_m}$.

The map $\psi$ only collapses points which have the same image 
under $\widetilde{\pi}$. Therefore there is a Borel map
$\pi':(X',S')\to (Y',T')$ 
defined by $\widetilde{\pi}= \pi' \psi$, 
where $Y'= \tilde\pi(X'')=\pi'(X')$ 
and $T'$ is the restriction of $T$ to $Y'$. 
Let $\sim$ also denote the natural relation 
on the alphabet of $X'$: 
$\{a_1, \dots , a_m\} \sim \{b_1, \dots , b_m\}$ 
iff $a_i\sim b_j$ for all $i,j$.  
If $w',x'$ are in $X'_{\pm\ret}$, there are 
$w'', x''$ in $  X''_{\pm\ret}$ such that $\psi x'' = x'$, 
$\psi w''=w'$. 
(This is the one point where the proof would fail if we 
used $Z_{\ret}$ rather than $Z_{\pm\ret}$.)
Then $p_1 x'' = x \in Z_{\pm\ret}$
 and 
$p_1 w'' = w \in Z_{\pm\ret}$; and, 
$\pi' (x')=\pi'(w')$ if and only if 
$\pi (x)=\pi(w)$. Because $\pi$ is Bowen type 
on $Z_{\pm\ret}$, 
it follows that 
$\pi'$ is Bowen type 
on $X'_{\pm\ret}$. 

A set in $X''$ of full measure for $\mu''$ is 
$E''=\{(x_1, \dots , x_m)\in X'': x_i \in E, 1\leq i \leq m\}$. 
Points in $E''$ with the same $\widetilde{\pi}$ image are mapped by $\psi$
to the same point in $X'$. Setting $\mu' =\pi\mu''$,  the map 
\[
\pi': (S',\mu') \to (T,\nu ) 
\] 
is  an isomorphism of measure-preserving systems.  

The Markov shift $(X'',S'')$ constructed above given $\nu$ was  an irreducible component of the Markov shift obtained by restricting $\widetilde{X_m}$ to a higher block presentation. The higher block presentation was a notational convenience, but in any case there are only countably many higher block presentations of a given $\widetilde{X_m}$. 
Any Markov shift has only countably  many irreducible
components. Consequently, we build only countably many irreducible Markov shift extensions. 
\end{proof}

\begin{proof}[Proof of Theorem \ref{bowentypefactors}]
\Prop\ref{prop.MSforMeasure} implies \jb{the surjectivity} of the induced map 
$\pen (S|\bar X) \to \pen (T|\pi \bar X)$.
\jb{The  characterization of Markov shifts in terms of universal subsystems (\Thm\ref{thm.MarkovChar}) will yield the almost-Borel isomorphism of $\pi(\bar X)$ to a Markov shift as follows.}

Let $\nu$ be an ergodic and invariant probability measure of
$(\pi(\bar X),T)$. Let $\pi':(X',S')\to(Y',T)$ be the  extension
given by \Prop\ref{prop.MSforMeasure} 
with $\mu'\in\pen(S')$ such that $\pi'\mu'=\nu$.
Letting $q$ denote the period of the irreducible Markov shift 
$(X',S')$, we note:
\begin{enumerate}
\item 
The set of periods of $(T,\nu)$ coincides with that of $(S',\mu')$ and
therefore 
contains $q$; 
\item 
The image of $(X'_{\pm\ret},S')$ contains a strictly 
$(h(S'),q)$-universal system (by Proposition \ref{bowentypeuniv}, 
because $\pi'$ is 
finite to one, Bowen type on $X'_{\pm\ret}$). 
\end{enumerate} 
Using that entropy is a Borel function of the measure and the Borel Periodic Decomposition  
(\Thm\ref{thm:specdec}), we obtain an invariant Borel subset
$Z\subset\pi'(X')$ such that, for all measures $m$ on
 $\pi'(X'_{\pm\ret})$, $m(Z)=1$ if and only if
$q$ is a period of $m$ and $h(T,m)<h(S')$.
It follows from (2) above that $Z$ is
strictly $(h(S'),q)$-universal. Note that $Z$ depends only on the extension $\pi'$, hence there are at most countably many such sets $Z$, also: $u_{T}(q)\geq h(Z)=h(S')$.

Thus, either $\mu'$ is the measure of maximal entropy for 
$(X',S')$, or $h(T,m)=h(S',\mu')<h(S')$ so $m(Z)=1$. 
Altogether, then, $(\pi(\bar X),T)$ 
is almost-Borel isomorphic to a countable union of:
\begin{enumerate}
\item strictly $(u_{T}(p),p)$-universal systems (using Lemma \ref{lem.unionSUp});
\item systems supporting a single
measure $\mu$ of $\pen(T)$, 
such that  there exists $p$ 
with $h(T,\mu ) = u_T(p) $ and $(T,\mu )$ 
is $p$-Bernoulli. 
\end{enumerate}
\Thm\ref{univpartexists} implies that $\pi(\bar X)\setminus \pi(\bar X)_U$ (in the notation of that theorem) carries only measures from (2) above.
By \Thm\ref{thm.MarkovChar},
it follows  that $\pi(\bar X)$ is 
almost-Borel isomorphic  to a Markov shift. 
\end{proof}


\section{Continuous factors of Markov shifts: pathology} 
\label{pathology} 
The results of this section will give limits 
to any strengthening of our two main theorems 
(\ref{bowentypefactors} and \ref{thm.continuousfactorMS}) 
about continuous factors of Markov shifts. Recalling the discussion
after Theorem \ref{thm.continuousfactorMS} we build  examples with 
large sets of
\begin{itemize}
 \item[-] measures with entropy greater than the entropy $h_*(\pi)$ from Theorem  \ref{thm.continuousfactorMS} in \Prop\ref{countablebad}.
 \item[-] \mme's for a factor which is not finite to one, in Corollary \ref{sfttopcor}.
 \item[-] period-maximal measures for a finite-to-one but not Bowen
   type factor in 
Corollary \ref{badfinite}.
 \end{itemize}
We also remark that a factor of an irreducible Markov 
shift by a continuous map need not be a factor by a  
Bowen type map, even if it is a compact expansive system. 
Indeed, among subshifts  
(up to topological conjugacy, the compact zero-dimensional expansive
  systems), the  
continuous factors of 
irreducible Markov shifts are exactly the 
coded systems \cite{Fiebigs2002}. 
But among these, the 
factors by one-block codes
are the 
factors by Bowen type maps, and 
form a proper subset of the coded systems 
\cite{Fiebigs2002}.

\subsection{Arbitrary dynamics in high entropy}
It is well known that the entropy of irreducible 
Markov shifts can increase under one-block codes (which are 
continuous and Bowen type factor maps);   
see e.g. \cite{Fiebig2002,Fiebigs2002,FiebigRoy2006,Petersen1986}. 
The following construction, resembling 
\cite[Examples 3.3,3.4]{Petersen1986}, further
shows that a one-block code image of the 
nonrecurrent part
of a Markov shift  can have virtually no 
almost-Borel relation to that Markov shift. 
The quantity $h_*(\pi)$ in the statement of 
Proposition \ref{countablebad} 
comes from Theorem 
\ref{thm.continuousfactorMS}.

\begin{proposition} \label{countablebad} 
Suppose $Y$ is a subshift of $\{ 0,1\}^{\ZZ}$ 
and $\epsilon >0$.  
Then there is a locally compact irreducible Markov shift $X$ and 
a one-block code $\pi$ from $X$ into 
$\{0,1,2\}^\ZZ$ such that 
 $X$ is the disjoint union of Borel 
subsystems $X',X'',X'''$ for which the following hold. 
\begin{enumerate} 
\item 
$\pi(X')$ is almost-Borel isomorphic to $X$ with
  $\pi|X'$ one-to-$1$ ;
\item  
$\pi(X'')$ is almost-Borel isomorphic to $Y$ with $\pi|X''$ countable-to-$1$.
\item
$\pi(X''')$ is a fixed point and $X'''$ is a finite orbit.
\item 
$h_*(\pi )=h(X) < \epsilon$. 
\item 
$\pi(X)$ is compact and almost-Borel isomorphic to the disjoint 
union of $Y$ and $X$. 
\end{enumerate} 
\end{proposition}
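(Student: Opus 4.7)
The plan is to present $X$ as an irreducible Markov shift on a countable alphabet arising from a graph $G$ that combines three pieces: a distinguished fixed-point vertex $*$ with self-loop, labeled $\pi(*)=2$, yielding $X''' = \{*^\infty\}$; a ``tree'' of vertices $v_{n,w}$ indexed by $n\geq 0$ and $w\in\mathcal L_{2n+1}(Y)$, each labeled by the middle letter $w_{n+1}\in\{0,1\}$, with ``up'' transitions $v_{n,w}\to v_{n+1,wab}$ for each valid extension $ab$ (with $wab\in\mathcal L_{2n+3}(Y)$) and deterministic ``down'' transitions $v_{n+1,u}\to v_{n,u_3\cdots u_{2n+3}}$; and, for each pair $(n,w)$, a bridge of fresh vertices connecting $*$ to $v_{n,w}$ (and back), realized as a simple path of length $\ell(n,w)$ whose $\{0,1\}$-labels form a pattern unique to that bridge. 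The design encodes $Y$ via $V$-shaped bi-infinite paths in the tree (depth profile $n_t=|t-t_0|$), while the bridges supply irreducibility at minimal entropic cost.

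I would first verify that $G$ is strongly connected (every tree vertex lies on a bridge through $*$, and $*\to*$), and then decompose $X$ via the Sarig/non-Sarig dichotomy: outside an almost null set of orbits supported on short periodic patterns in $Y$, the up/down tree dynamics is acyclic, so every orbit in $X_{\pm\ret}$ visits $*$ infinitely often. Set $X' = X_{\pm\ret}\setminus\{*^\infty\}$ and $X'' = X\setminus X_{\pm\ret}$. Orbits in $X''$ live entirely in the tree with depth tending to infinity in one or both time directions; the $V$-shape profile exhibits, for each $y\in Y$ and each turning time $t_0\in\ZZ$, a preimage in $X''$, producing a countable-to-one almost-Borel surjection $\pi|X''\colon X''\to Y$, which is an almost-Borel isomorphism after restriction.

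Next, all invariant measures live on $X'$, and the Gurevi\v c entropy of $X$ equals the exponential growth rate of first-return loops at $*$, each being a bridge-plus-short-tree excursion. By choosing the lengths $\ell(n,w)$ to grow sufficiently rapidly (for instance, doubly exponentially in $n$), the number of first-return loops of length $\leq N$ is sub-exponential in $N$, giving $h(X)<\epsilon$. The uniqueness of bridge labels then forces $\pi|X'$ to be injective: the $2$'s in $\pi(x)$ mark precisely the visits of $x$ to $*$, and the $\{0,1\}$-string between consecutive $2$'s identifies a unique bridge and hence a unique vertex excursion.

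The main obstacle is compactness of $\pi(X)\subset\{0,1,2\}^\ZZ$, which must be achieved simultaneously with the above entropy and encoding constraints. Limits of $\pi(X)$-sequences arise as the $2$-visits of the preimages in $X$ become sparser; such limits accumulate either at $2^\infty$ or in $\{0,1\}^\ZZ$. To ensure the latter lies in $\pi(X)=\pi(X')\cup\pi(X'')\cup\{2^\infty\}$, the bridge labels are arranged so that the bridge through $v_{n,w}$ has a $\{0,1\}$-pattern incorporating the word $w$ (symmetrically padded), so that any limit with vanishing $2$'s inherits arbitrary-length subwords from $\mathcal L(Y)$ and thus lies in $Y=\pi(X'')$. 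Granted compactness, the three pieces $\pi(X'),\pi(X''),\{2^\infty\}$ are almost-disjoint (distinguished by presence, absence, or ubiquity of the symbol $2$), so $\pi(X')\sim X$ (from injectivity of $\pi|X'$ and the fact that $X'$ carries all measures in $\pen(S)$) combined with $\pi(X'')\sim Y$ yields the required almost-Borel isomorphism $\pi(X)\sim X\sqcup Y$.
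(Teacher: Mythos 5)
Your construction shares several core ideas with the paper's (a word-tree over $\mathcal L(Y)$ whose deep escapes encode $Y$, bridges through a distinguished vertex to secure irreducibility, and rapidly growing bridge lengths to keep the Gurevi\v{c} entropy small), but there is a structural flaw in the tree that breaks both the entropy bound and the injectivity claim.

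The paper splits the tree into two \emph{one-way} pieces: in $G^+$ edges go from $v_W$ to $v_{Wi}$ (word length strictly increases forward in time), and in $G^-$ symmetrically (length strictly increases backward in time), with the two meeting only at $v_\nullset$. Consequently, a bi-infinite orbit of $G^-\cup G^+$ is forced to be a single descent into $v_\nullset$ followed by a single ascent, there are no cycles in $G^-\cup G^+$, it carries \emph{no} invariant probability, and its image only contributes wandering points outside $Y$. The only invariant measures of $X$ then live on the added $2$-labeled paths and the $M$-loop, whose loop count at $v_\nullset$ is made sub-exponential.

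Your tree instead has both the up moves $v_{n,w}\to v_{n+1,wab}$ and the down moves $v_{n+1,u}\to v_{n,u_3\cdots u_{2n+3}}$. This single tree is \emph{not} acyclic and carries lots of nonatomic measures. Concretely, restrict to depths $n\le 1$. From $v_{0,a}$ the only moves are up to some $v_{1,abc}$ with $abc\in\mathcal L_3(Y)$, and from $v_{1,abc}$ down to $v_{0,c}$. A bi-infinite orbit is therefore determined by an arbitrary sequence $(a_k,b_k)_{k\in\ZZ}$ with $a_kb_ka_{k+1}\in\mathcal L_3(Y)$: this is an irreducible SFT, and even for a Sturmian $Y$ (let alone the full shift) it has positive entropy, at least $\tfrac12\log 2$. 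These orbits never visit $*$ or any bridge, so they lie in your $X'=X_{\pm\ret}\setminus\{*^\infty\}$ and they carry nonatomic ergodic measures. Two consequences follow. First, $h(X)\ge\tfrac12\log 2$ independently of $\epsilon$, so (4) fails. Second, on this depth-$\le 1$ subsystem the label only records the middle letter, so $\pi$ collapses the two depth phases: the orbits through $v_{0,a_0}$ at even times and through $v_{1,a_0b_0a_1}$ at even times have the same $\pi$-image but are distinct orbits (unless $a_k=b_k$ for all $k$). Thus $\pi|X'$ is not one-to-one on a set of full measure for some nonatomic ergodic measure, so (1) fails as well. Your appeal to ``the $2$'s between consecutive visits'' cannot rescue this, because these orbits produce no $2$'s at all; and indeed the ``almost-disjointness by presence/absence/ubiquity of $2$'' also fails, since $\pi(X')$ here contains $2$-free sequences.

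Secondary concerns: labeling bridges with $\{0,1\}$-patterns rather than with $2$'s (as the paper does) makes the decoding and compactness analysis far more delicate, since you must simultaneously make the bridge patterns uniquely decodable, compatible with the entropy bound, and such that limits of sparse-bridge images land back in $Y$; the paper's use of $2$'s as inert markers, with lengths $m_k$ and $M$ chosen so that the length of a maximal $2$-block pins down the underlying $G$-path, is both simpler and the crux of the injectivity argument. The essential repair is to replace your single up-and-down tree with the paper's one-way $G^+$ and $G^-$ (or any device that forces the tree part to carry no invariant measures), after which the remainder of your outline (bridges, entropy via loop counts, compactness) can follow the paper's line.
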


\begin{proof} 
We build in stages a labeled graph $G$ defining $\pi$. 
The Markov shift $X$ will be the edge shift defined by 
$G$. Each edge  will be labeled by a symbol from $\{0,1,2\}$. 
The one-block code will be the rule replacing an edge 
with its label. 

First, there is a labeled subgraph $G^+$ 
which has for every $Y$-word 
$W$(including the empty word $\nullset$) a vertex $v_W$, 
and 
for $i\in \{0,1\}$ with $Wi$ a $Y$-word, has an 
edge labeled $i$ from $v_W$ to $v_{Wi} $. 
Then for each $z$ in $Y$, there is a unique path 
from $v_{\nullset}$ labeled by the onesided sequence 
$z[0,\infty )=z_0z_1\dots $. 
Similarly build a graph $G^-$
such that for each $y$ in $Y$ there is a unique 
left infinite path into $v_{\nullset}$ labeled by $y(-\infty,-1]= ... y_{-2}y_{-1}$.

Let $X''$ be the edge shift presented by 
$G^- \cup G^+$.  Note, $v_\nullset$ is the only common vertex of $G^-,G^+$.
The image $\pi X''$ is the 
set of all shifts of sequences that are concatenations 
$y(-\infty -1] z[0,\infty)$ 
with $y,z$ in $Y$. 
For $n\in \NN$, define 
\[
B_n\ =\ \{ y\in \pi(X'')\setminus Y:\ 
y[-n,n] \text{ is not a }Y\text{-word} \}\ , 
\]
a possibly empty wandering subset of $Y$. Because $\pi(X'')\setminus Y = \cup_n B_n$, an almost null set,
 the inclusion $Y\subset\pi(X'')$ gives an almost-Borel
isomorphism. 
Any $x\in X''$ is determined by $\pi(x)$ and $x_0$, and
 therefore $\pi|X''$ is countable-to-one. Claim (2) ensues. 


The definition of $X$ 
will depend on positive integer parameters to be specified later:
 $(n_k)_{k=1}^{\infty}$,
 $(m_k)_{k=1}^{\infty}$ and $M$. For each integer $k\geq1$ 
we add edges labeled by $2$ as follows.  
Let $\mathcal V^-_k$ and $\mathcal V^+_k$ 
be the sets of 
vertices in $G^-$ and $G^+$ corresponding to words of 
length $k$. 
For each $v_-$ in $\mathcal V^{-}_{n_k}$ and each $v_+$
in $\mathcal V^+_{n_k}$, add in an otherwise isolated extra path from 
$v_+$ to $v_-$ of 
length 
$m_k $.
We also add an extra loop based at $v_\nullset$ with
length $M$ 
(the loop is used to make the image of $\pi$ compact).

Now fix $(n_k)$ an arbitrary strictly increasing sequence of positive 
integers.  Then for large $M$ and $(m_k)$ any sequence of 
large enough positive integers, 
we have $h(X)<\epsilon$.  
For a formal proof of this (obvious) fact, one can use for example the 
 Gurevi\v{c} entropy formula, which  states 
that $h(X)$ is the growth rate of the number 
of loops based at $v_\nullset$ when their length goes to infinity. 
We choose  $\{m_1<m_2<\dots\}\cap M\NN=\emptyset$.

Define $X'''=\pi^{-1}(2^{\infty})$; $X'''$ 
is the finite orbit 
 corresponding to the special $M$ loop at $v_{\emptyset}$. 
Then (3) holds. 
Next we show $\pi$ is injective on $X'$, 
the complement of $X''\cup X'''$.  
If $y\in \pi (X')$, then 
there is at least one maximal block of $2$s in $y$ which is
bordered by a $0$ or $1$.
The length of the block 
($\infty$,  
$m_{k}$ for some $k$, or a multiple of $M$) determines 
a 
vertex in $G$ (more precisely, among the ones with ingoing or outgoing
edge labeled $2$)  from which the 
preimage of $y$ is uniquely determined.
Because all nonatomic measures on $X$ are supported 
on $X'$, Claim (1) follows, 
and also $(4)$.

%
The 
almost-Borel isomorphism claim of (5)
then follows from (1) and (2) 
 because $\pi(X)=\pi(X')\sqcup\pi(X'')\sqcup \pi(X''')$.

%


It remains to check the compactness. 
Suppose $z\in \overline{\pi(X)}$. 
If $2$ does not occur in $z$, then $z$ must be in 
$\pi(X'')$, which is compact. 
Now suppose $z=\lim \pi(x^n)$ 
for a sequence $(x^n)$, $2$ occurs in $z$ and $z\neq 2^{\infty}$.  
If a finite maximal block of $2$s occurs 
in $z$, then by considering the unique $G$-path above that block, one 
sees $z\in \pi(X')$. So suppose there is no such block.
Suppose 
$z_i\neq 2$ and $z[i+1,\infty)=2^{\infty}$. 
Let $v_n$ be the terminal vertex of 
$(x^n)_i$. If a subsequence $(v_n)$ goes to $+\infty$, 
then $z(-\infty,i]$ must be the left half of a point in $Y$; 
otherwise, a subsequence of $(v_n)$ is constant and 
$z\in \pi(X')$. The argument for 
the case $z(-\infty,i]=2^\infty$ is essentially the same. 
%
\end{proof} 
\begin{remark} 
It is an exercise to show that 
$X$ in Proposition \ref{countablebad}  
can in addition be chosen to be SPR 
(positive recurrent, and 
exponentially recurrent with respect to
its measure of maximal entropy -- see 
\cite{BBG2006} for equivalent conditions 
and reference to \cite{GurevichSavchenko} for more). 
In some ways, the SPR Markov shifts 
behave like shifts of finite type -- but not here. 
\end{remark}


\subsection{Wild Maximal Entropy}
The next result realizes a wide class of systems $T$ as equal
entropy subsystems of  continuous factors of  SFTs. 
This will be used to prove Corollary \ref{sfttopcor}.

First, we need to recall some 
definitions. 
A system is zero dimensional if its topology is generated
by clopen sets. Every such
system is topologically isomorphic to an inverse 
limit $X=X_1\leftarrow X_2\leftarrow \cdots $ 
where for all $n\in \mathbb N$, $X_n$ is 
a subshift and the bonding map 
$X_n\leftarrow X_{n+1}$ is surjective.
A continuous factor of a system is finite/zero dimensional, etc.  if as a space
it is finite/zero dimensional/etc.

The property {\it entropy-expansive} was defined 
by Bowen \cite{Bowenentexp}. 
 A zero dimensional t.d.s. is entropy-expansive
 if and only if the above inverse limit  satisfies
  $h(X)=h(X_n)$ for some $n$.
The property {\it asymptotically h-expansive} 
was a generalization defined by Misiurewicz 
\cite{Misiurewicz} 
(under the name ``topological conditional entropy'', 
which is now probably best avoided 
 \cite[Remark 6.3.18]{Downbook}). 
Any asymptotically 
$h$-expansive system has finite entropy 
and has a measure of maximal entropy
\cite{Misiurewicz}. 
The asymptotic $h$-expansiveness property 
plays an important role in the entropy theory of symbolic 
extensions \cite{Downbook}. 
A zero dimensional compact t.d.s. is asymptotically $h$-expansive 
if and only if 
 it is topologically isomorphic 
 to a subsystem of a product
$\prod_{k=1}^{\infty} X_k$ of some subshifts $X_k$ such that 
$\sum_k h(X_k) < \infty$   (see \cite{Down2001} or \cite[Theorem 7.5.9]{Downbook}).

\begin{theorem} \label{sfttop}
Suppose $T$ is a compact zero dimensional 
topological dynamical system which is  asympotically 
$h$-expansive and  is not entropy expansive. Then there is a 
continuous factor map from a 
mixing SFT onto a system 
$Y$ such that $h(T)=h(Y)$ and $Y$ contains 
a subsystem topologically 
conjugate to $T$. 
\end{theorem} 

\begin{proof}
Without loss of generality, we assume 
$T\subset X=\prod_{k=1}^\infty X_k$ where 
each $X_k$ is a mixing SFT with a fixed 
point, alphabet $\mathcal A_k$, and $\sum_k h(X_k)<\infty$. Then $X$ is a factor of a mixing SFT 
\cite[Theorem 7.1]{BoyleFiebigFiebig}. So it is 
enough to find a continuous factor 
map $\gamma: X\to Y$ such that $\gamma|T\equiv\id$, $T\subset Y\subset\prod_{k\geq1}(\mathcal A_k\sqcup\{0\})^\ZZ$, and $h(Y)=h(T)$. 

We introduce some notations. 
Suppose $R$ is a subshift 
and $M$ is a positive integer. Then 
$\mathcal W(M,R)$ is the set of words of 
length $M$ occuring in points of $R$. We let $\hXN = X_1\times \cdots \times X_N$ and $T_N$ be the projection 
of $T$ in $\hXN$. 
We write  $x\in X$ as $(x_1,x_2,\dots)$ with $x_k\in X_k$. We denote by $(x_1,\dots,x_N)|J$ the restriction of these sequences to an integer interval $J$.
Given $N,M\geq1$, $x\in X$,  we define
 $$
   I(x,N,M):=\{j\in\ZZ:(x_1,\dots,x_N)|[j,j+M)\in\mathcal W(M,T_N)\}
 $$
and let $J(x,N,M,L)$ be the union of integer intervals of length $L$ that are contained in $I(x,N,M)$. 

We shall select two non-decreasing sequences of positive integers $M_N,L_N$, $N\geq1$, and define $\gamma_N:\hXN\to (\mathcal A_N\sqcup\{0\})^\ZZ$ by:
 $$
    \gamma_N(x)=(y_j)_{j\in\ZZ} \text{ with }y_j= \alter{x_N|j  \text{ if }j\in J(x,M_N,L_N)\\
                            0  \text{ otherwise.}}
  $$
We also define $\hat\gamma_N:X\to\prod_{1\leq k\leq N} (\mathcal  A_k\sqcup\{0\})^\ZZ$ by: 
 $$
   x\mapsto (\gamma_1(x_1),\gamma_2(x_1,x_2),\dots,\gamma_N(x_1,\dots,x_N)),
 $$
and, finally, $\gamma:X\to\prod_{N\geq1} (\mathcal A_N\sqcup\{0\})^\ZZ$ by:
 $$
   \gamma(x):=(\gamma_1(x_1),\gamma_2(x_1,x_2),\dots) \text{ and let } Y:=\gamma(X).
 $$
$Y$ is a compact t.d.s.\ and a factor of $X$ and $\gamma|T\equiv\id$.

 Because $T$ is not entropy expansive, 
we have for all $N$ (perhaps after telescoping) that 
$h(T_{N+1})> h(T_N)$. Hence, we can fix a sequence of numbers $h_N$, $N\geq1$  such that $h(T_N)<h_N<h(T_{N+1})$ for all $N\geq1$.

\medbreak
It now suffices to show that there are sequences $M,L$ such that:

\medbreak
\noindent{\bf Claim.} {\it For all $N\geq1$, there is $C_N<\infty$ such that, for all $\ell\geq0$:}
 \begin{equation}\label{eq.hNbound}
     \#\{\hat\gamma_N(x)|[0,\ell):x\in X\} \leq C_N e^{h_N\ell}.
 \end{equation}
\medbreak

We extend the above claim to $N=0$, by putting $\hat \gamma_0(x):=0^\infty$, so $C_0=1$ and $h_0=0$ satisfy it for arbitrary $M_0,L_0$. We let $N\geq1$, fix $0<\eps<(h_N-h(T_N))/3$ and assume the claim for $N-1$ for some choice of $M_{N-1},L_{N-1}$.

Pick $M:=M_N\geq M_{N-1}$ such that, for some $K_1(M)<\infty$, for all $j\geq0$:
 \begin{equation}
   \#\mathcal  W(j,T_N)\leq \left(\#\mathcal  W(M,T_N)\right)^{j/M+1}\leq K_1(M) e^{(h(T_N)+\eps)j}.
 \end{equation} 

By construction, the maximal integer intervals  in $J(x,N,M,L)$ have length at least $L$. Therefore, letting $\mathcal J_\ell(N,M,L):=\{J(x,N,M,L)\cap[0,\ell):x\in X\}$, we have, for $L:=L_N$ large enough:
 \begin{enumerate}
 \item for all $\ell\geq0$, $\#\mathcal J_\ell(N,M,L)\leq K_2(L) e^{\eps \ell}$;
 \item $C_{N-1}K_1(M)\leq e^{\eps L}$.
 \end{enumerate}
 
Note that the elements of $\hat\gamma_N(x)|[0,\ell-1]$, $x\in X$, can be determined by specifying:
 \begin{enumerate}
  \item $J:=J(x,N,M,L)\cap [0,\ell)$;
  \item for each maximum integer interval $I'$ in $J$, $\hat\gamma_N(x)|I'$;
  \item for each maximum integer interval $I''$ in $[0,\ell)\setminus J$, $\hat\gamma_N(x)|I''=\hat\gamma_{N-1}(x)|I''\times 0^{I''}$.
 \end{enumerate}
For (1), the number of possibilities is bounded by:
  $$
     \#\mathcal W(\ell,Z_{L})\leq K_2(L) e^{\eps \ell}.
  $$ 
Fix one of these. Then, there are at most $\ell/L+2$ intervals $I'$ as in (2), so writing $\ell'$ for the sum of their lengths, the number of possibilities for (2) is at most:
 $$
     K_1(M)^{\ell/L+2} e^{\ell'(h(T_N)+\eps)}.
 $$
For (3), we similarly get the bound:
 $$
     (C_{N-1})^{\ell/L+2} e^{\ell''h_{N-1}}.
 $$
Thus, the number of possibilities for $\hat\gamma_N(x)|[0,\ell-1)$ is bounded by:
 $$
      K_2(L) (K_1(M) C_{N-1})^2 e^{(h(T_N)+3\eps)\ell}
 $$
As $h(T_N)+3\eps\leq h_N$, \eqref{eq.hNbound} follows for an obvious choice of $C_N$. The induction and therefore the proof is complete.
\color{black}


\suppress{ 
For $M$ in $\mathbb N$ and
$x = (x_1, \dots , x_n)\in  \ov{X_n}$, set 
\[
\mathcal I_M(x) =
\{i\in \ZZ: x[i,i+M) \notin \mathcal W(M,T_n)\}
\] 
and let $\gamma_n^{(M)}$ be the block 
code with domain $\ov{X_n}$ 
defined by 
\begin{align*} 
(\gamma_n^{(M)} x)_i \ & = \ (x_n)_i\quad 
\text{if for some }j,\ \ 
j+M \leq i <i+2M \text{ and } \\ 
& \quad \qquad \qquad [j+1,j+2M-1] \cap \mathcal I_M = \emptyset \\ 
&= \ 0 \quad \quad \ \ \text{otherwise .} 
\end{align*} 
Note that the definition forces the image of 
$\gamma_n^{(M)}$ to be contained in $((T_n)_{<M>})^{(M)}$. 
For each $n$, we will choose $M_n$ and define $\gamma_n$ 
to be $\gamma_n^{(M_n)}$. Then we define $Y$ to be the image of $X$ 
under the map 
\[
\gamma : \big(\, x_1,\, x_2,\,  x_3,\,  \dots \, \big)\  \mapsto \ 
\big(\, \gamma_1 \big(x_1\big),\,  \gamma_2\big(x_1,x_2\big),\,  
\gamma_3\big(x_1,x_2,x_3\big),\,  \dots \, \big)
\ . 
\]
It follows that $\gamma$ restricts to the identity 
on $T$, so $T\subset Y$.  (We are not arranging that 
$Y\subset X$.) 
It remains to show that the integers
$M_n$ 
can be chosen such that $h(T)=h(Y)$. 
Let $h(T_n)= \log \lambda_n$. 
}

\suppress{ 
First, we claim there is $M_1$ such  that $h(\gamma_1^{(M_1)})<
h(T_2)$. 
Pick $\epsilon >0$  such that 
$(1+ \epsilon ) (\lambda_1 + \epsilon )< \lambda_2$. 
Pick $M'$ such that $h(Z_M)< \log (1+ \epsilon )$ and 
$h( (T_1)_{<M>}) < \log (\lambda_1 + \epsilon )$. Pick $M''\geq M $ 
such that 
for all $k \geq M''$,
(i) $\mathcal  W(k,Z_{M'}) < (1 + \epsilon )^k$ and  
(ii) 
$|\mathcal W(k,(T_1)_{<M'>})| \leq (\lambda_1 + \epsilon )^k$. 
These conditions continue to hold for $M'$ if $M$ 
is increased. Set $M_1 = \max \{M,M'\}$. Now (i) and 
(ii) hold with $M'=M''=M_1$. 
\annotation{\mbwhy{Too much detail with $M',M''$?}}
}

\suppress{
Let $M=M_1$. For $k\geq M$, the number of zero/nonzero patterns of words in 
$|\mathcal W(k,\gamma_1^{(M)}(\overline{X_1})| $ is at most 
$|\mathcal W(k,Z_M)| $, and  
the number of words in $|\mathcal W(k,\gamma_1^{(M)}(\overline{X_1})| $
with a fixed zero/nonzero pattern  
is at most $|\mathcal W(k,(T_1)_{<M>})| $. 
Therefore for $k\geq M$, 
\begin{align*}
|\mathcal W(k, \gamma_1^{(M)} \overline X_1)| 
&\leq 
|\mathcal W(k,Z_M| |\mathcal W(k,(T_1)_{<M>})| \\ 
& \leq (1+ \epsilon )^k (\lambda_1 + \epsilon )^k  < (\lambda_2)^k \ .
\end{align*}   
}

\suppress{ 
The recursive step is similar.  
Let 
$\rho_n$ denote the map on $\overline{X_n}$ 
defined  by 
$\rho_n: (x_1,  x_2, \dots , x_n ) \mapsto 
(\gamma_1(x_1), \gamma_2(x_1, x_2), \dots ,
\gamma_n(x_1, \dots , x_n))$; then 
\[
\rho_n :x\mapsto \big(\rho_{n-1}(x_1, \dots , x_{n-1}), 
\, \gamma_n(x_1, \dots , x_n)\big)\ , \ \ 
\text{ if } n>1\  ,
\] 
and $\rho_1=\gamma_1$. 
Suppose $n>1$ and  we are given $\rho_{n-1}$ with 
$h(\rho_{n-1}\overline{X_{n-1}})< h(T_n)$. Pick $\epsilon >0$ such that 
$(1+\epsilon )( \lambda_{n}+\epsilon) < \lambda_{n+1}$. 
Pick $M_n=M$ such that  
$M\geq M_{n-1} $ and  
for all $k \geq M$,
 the following hold: 
\begin{enumerate} 
\item 
$|\mathcal  W(k,Z_M)| < (1 + \epsilon )^M$,   
\item 
$|\mathcal W(k,(T_n)_{<M>})| < (\lambda_{n}+\epsilon)^k$, 
 \item 
$|\mathcal W(k,\rho_{n-1}\overline{X_{n-1}})| < (\lambda_{n})^k$.
\end{enumerate}  
The condition $M_n\geq M_{n-1} $ 
guarantees that $\mathcal I_M(x)$ constructed for 
$\gamma_{n-1}^{(M)}$ with $M=M_{n-1} $ contains 
 $\mathcal I_M (x)$ constructed for 
$\gamma_{n}^{(M)}$ with $M=M_{n} $, and therefore  if 
$k\geq M$ and 
$(\gamma_nx)[i,i+k)$ is a nonzero block then 
$(\rho_nx)[i,i+k) \in \mathcal W(k,(T_n)_{<M>})$.  
}

\suppress{ 
Write $W$ in 
$\mathcal W(k,\rho_{n}\overline{X_{n}})$ 
as $W=(W',W'')$ where $W'$ is an output word for $\rho_{n-1}$ and 
$W''$ is an output word for $\gamma_n$.   
Consider a fixed allowed zero/nonzero pattern for  
 $W''$. 
Given a maximal zero subblock $W''[i,i+j)$ in $W''$ with $j \geq
M$,  the number of possibilities for $W'[i,i+j)$ 
is at  most $|\mathcal W(j,\rho_{n-1}\overline{X_{n-1}})| <
(\lambda_n)^j$. 
Given a maximal nonzero subblock  $W''[i,i+j)$ in $W''$ with $j \geq
M$: the number of possibilities for $W[i,i+j)$ 
is at most 
  $|\mathcal W(j,(T_n)_{<M>})| < (\lambda_{n}+\epsilon )^j$. Therefore 
there is a constant $C>0$ (to account for maximal zero or 
nonzero blocks of length $<M$ at the beginning or end of 
$W''$) such that 
for all $k\geq M$,  
the number of words in  
$\mathcal W(k,\rho_{n}\overline{X_{n}})$ 
with a fixed zero/nonzero pattern is bounded above by 
$C(\lambda_{n}+\epsilon )^k$. Consequently,  
\begin{align*}
|\mathcal W(k,\rho_{n})^{(M)}\overline{X_{n}})| 
&< 
|\mathcal W(k,Z_M)|C (\lambda_{n}+\epsilon )^k \\ 
&< (1+\epsilon)^kC(\lambda_n + \epsilon)^k< C (\lambda_{n+1})^k \ . 
\end{align*}
This finishes the proof. 
}
\end{proof}

\begin{corollary} \label{sfttopcor}
For any ergodic, finite entropy, measure-preserving system $Z$, there
is a continuous factor of a mixing SFT which admits among its
ergodic measures of maximal entropy uncountably many copies of 
 the product of $Z$ with a Bernoulli system.
\end{corollary}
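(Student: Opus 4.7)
The plan is to construct a compact zero-dimensional topological dynamical system $T$ to which Theorem~\ref{sfttop} applies and whose ergodic measures of maximal entropy contain an uncountable family of measure-theoretic copies of $(Z,\zeta)$ times a Bernoulli shift. Then Theorem~\ref{sfttop} will embed $T$ into a continuous factor $Y$ of a mixing SFT with $h(Y)=h(T)$, and those mme's of $T$ will remain mme's of $Y$, yielding the corollary.

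For the building blocks, I would let $Z'$ be a strictly ergodic zero-dimensional topological model of $(Z,\zeta)$ provided by Jewett--Krieger (if $Z$ is atomic, model the finite orbit trivially); let $B$ be the full two-shift with its Bernoulli measure of maximal entropy $\beta$; let $K = \prod_{n\geq 1} X_n$ with each $X_n$ a mixing SFT of entropy $2^{-n}$, so $K$ is asymptotically $h$-expansive but not entropy-expansive (by the characterization recalled just before Theorem~\ref{sfttop}) and carries the Bernoulli mme $\kappa_K = \prod_n (X_n,\text{mme})$; and let $C$ be a Cantor set acted on by the identity. I would then set $T = Z' \times B \times K \times C$ with the product dynamics.

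Several routine verifications then suffice. The system $T$ is compact and zero-dimensional; it is asymptotically $h$-expansive, since concatenating the product representations of $Z'$, $B$, $K$, and $C$ produces a representation with summable component entropies; and $T$ is not entropy-expansive because $K$ is not, so at every scale $\epsilon>0$ some infinite Bowen ball in $K$ has positive topological entropy and a product of Bowen balls across the four factors is contained in the corresponding Bowen ball of $T$. Topological entropy is additive on products, so $h_\topo(T) = h(Z)+h(B)+h(K)$. The measure $\mu_1 := \zeta\times\beta\times\kappa_K$ realizes this entropy and is ergodic (since $\zeta$ is ergodic while $\beta$ and $\kappa_K$ are mixing), so it is an ergodic mme of $Z'\times B\times K$. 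Moreover, the $C$-factor $\sigma$-algebra consists entirely of $T$-invariant sets, so any $T$-ergodic measure must be concentrated on $Z'\times B\times K\times \{c\}$ for some $c\in C$. In particular, $\{\mu_1 \times \delta_c : c\in C\}$ is an uncountable family of distinct ergodic mme's of $T$, each measure-theoretically isomorphic to $(Z,\zeta)\times\mathrm{Bern}$, where $\mathrm{Bern} := (B,\beta)\times\kappa_K$ is Bernoulli as a product of Bernoullis.

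Applying Theorem~\ref{sfttop} to $T$ finally produces a mixing SFT $\widetilde\Sigma$ and a continuous factor $\pi:\widetilde\Sigma\to Y$ with $h(Y)=h(T)$ and a topological embedding $T\hookrightarrow Y$. Each $\mu_1\times\delta_c$ viewed on $Y$ is ergodic with entropy $h(T)=h(Y)$, hence an ergodic mme of $Y$, completing the argument. The main obstacle I foresee is simply the bookkeeping needed to check that the product $T$ inherits both asymptotic $h$-expansiveness and failure of entropy-expansiveness from its factors; the measure-theoretic structure is essentially dictated by the identity action on the $C$-factor.
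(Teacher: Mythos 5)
Your proof is correct and mirrors the paper's argument: both invoke Jewett--Krieger for a strictly ergodic model of $Z$, use an infinite product of positive-entropy mixing SFTs to supply a Bernoulli measure of maximal entropy while defeating entropy-expansiveness, multiply by a Cantor identity factor (a product of two-point identity systems) to produce uncountable multiplicity, and then apply Theorem~\ref{sfttop}. The extra full two-shift factor $B$ is harmless but redundant, since $K$ already supplies the positive-entropy Bernoulli component.
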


\begin{proof}
Let $B=\prod_{n\geq 1}B_n$, where the $B_n$ are positive 
entropy mixing SFTs with fixed points such that 
$h(B)< \infty $. 
$B$ has a unique measure $\mu$ of maximum
entropy, the product of the unique maximum entropy measures $\mu_n$ of 
the $B_n$. 
Each $(B_n,\mu_n)$ is 
a mixing Markov chain and therefore  Bernoulli 
(by 
\cite{FriedmanOrnstein}).
It then follows from  \cite[Theorem 1]{OrnsteinProduct} 
that $(B,\mu )$ 
is also isomorphic to a Bernoulli shift 


By the Jewett-Krieger theorem, 
there is a strictly ergodic subshift $S$ which is measurably 
isomorphic to $Z$.
Let $W=S\times\prod_{n=1}^{\infty}W_n$ with  each $W_n$ 
the identity map on a two point space.
Then $B\times W$ is asymptotically 
$h$-expansive and not 
$h$-expansive  so  \Thm\ref{sfttop} applies with $T=B\times W$.
\end{proof} 

Note that the Bernoulli factor is only used to ensure the topological condition of asymptotic $h$-expansivity without entropy-expansiveness. Moreover, if
$Z$ in Cor. \ref{sfttopcor} has positive entropy and the weak Pinsker
property\footnote{ This property holds for all positive entropy ergodic systems according to the Weak Pinsker Conjecture 
\cite{Thouvenot1977,Thouvenot2008} (which remains open).} then (of course) the conclusion holds 
for $Z$ itself, with no need to  take a product with a Bernoulli system. 

The next proposition shows that the assumption that 
$T$ not be entropy expansive was  necessary for 
it to be embedded as a proper 
full entropy subsystem of a 
continuous factor of a mixing SFT.

\begin{proposition} \label{entexpprop}
Suppose $X$ is a mixing SFT, $Y$ is a 
zero dimensional continuous factor 
of $X$ and $T$ is an entropy expansive subsystem of $Y$ 
such that $h(T)=h(Y)$. Then $T=Y$.  
\end{proposition} 

\begin{proof} Let $Y$ be given as an inverse limit of 
subshifts $Y_n$ by surjective bonding maps 
$p_{n}: Y_{n+1} \to Y_{n}$.  
Let $\pi_n : Y\to Y_n$ be the projection and 
let $T_n$ be the subshift $\pi_n T$. With $p_n$ also denoting 
the restriction of $p_n$ to $T$, we have 
 $T$  as the inverse limit 
$T_n \leftarrow T_{n+1}$ 
by surjective bonding maps. Suppose $Y\neq T$. 

Pick $N$ such that $h(T_N)= h(T)$. We assume by contradiction, $T_N \neq Y_N$. Let $\gamma : X\to Y$ be the 
continuous factor map. Then $\pi_N \circ \gamma 
:= \gamma_N$ is a factor map onto $Y_N$ which is 
therefore mixing sofic. Hence $h(T_N)<h(Y_N) \leq h(Y)$, a contradiction. 
\end{proof}

\subsection{Wild period-maximal measures subsection}
We now consider the case that 
$\pi \colon X\to Y$ is a bounded to one continuous factor map from 
an irreducible SFT $X$ onto a zero dimensional system $Y$. 
In this case, $Y$ has a unique measure of maximal entropy, which 
 must be period-Bernoulli. 
If $Y$ is expansive, then $Y$ is irreducible sofic and 
almost-Borel isomorphic to a Markov shift. 
If $Y$ is not expansive then 
the Borel structure of $Y$ at a period can be very different 
from that of a Markov shift. 

Below $Y_1$ and $T_1$ denote the restrictions of $Y$ and $T$ to ergodic 
measures with maximum period 1 (see the Borel periodic decomposition \Thm\ref{thm:specdec}).

\begin{proposition} \label{badatperiod} 
Suppose $T$ is a subshift. Then there is a 
 period 2 irreducible SFT $X$ 
and a continuous factor map 
$\pi $ from $X$ onto 
a zero dimensional metrizable system 
$Y$ such that  the following hold.
\begin{enumerate} 
\item 
$|\pi^{-1}(y)| \leq 2$ , for all $y\in Y$.  
\item 
$\pi^{-1}T = \{x\in X: |\pi^{-1}(\pi (x))|=2\}$. 
\item 
$Y\setminus Y_1$ is almost-Borel isomorphic to $X$. 
\item 
$Y_1$ is almost-Borel isomorphic to $T_1$.
\end{enumerate} 
Moreover, $X$ can be chosen with $h(X)$ arbitrarily close to $h(T)$.
\end{proposition}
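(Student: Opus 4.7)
The plan is to construct the pair $(X,\pi)$ as a quotient of a period-$2$ extension of a mixing SFT containing $T$, then apply the Markov-shift classification. Fix $\epsilon>0$ and choose an irreducible mixing SFT $S$ with $T\subseteq S$ and $h(T)<h(S)<h(T)+\epsilon$ (a standard higher-block approximation of $T$ augmented by a small mixing loop suffices). Let $X := S\times\{0,1\}$ with shift $\sigma_X\colon(x,i)\mapsto(\sigma x,1-i)$. Because $S$ is mixing, $X$ is an irreducible period-$2$ SFT of entropy $h(S)$; its unique MME $\mu_X=\mu_S\otimes\tfrac12(\delta_0+\delta_1)$ is $2$-Bernoulli, and since $h(T)<h(S)$ forces $\mu_S(T)=0$ we have $\mu_X(T\times\{0,1\})=0$. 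Define the closed equivalence relation $\sim$ on $X$ by $(x,0)\sim(x,1)$ iff $x\in T$, and let $\pi\colon X\to Y:=X/\sim$ be the quotient.

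Topologically, $\sim$ is closed because $T$ is, and using zero-dimensionality of $S$ together with clopen $\sim$-invariant sets of the form $V\times\{0,1\}$ for $V\subseteq S$ clopen and $V\times\{i\}$ for $V\subseteq S\setminus T$ clopen, $Y$ inherits the structure of a zero-dimensional compact metric system. Properties (1) and (2) are immediate from the definition of $\sim$. For (4), the $\sigma_X$-action on each $\sim$-class $\{(x,0),(x,1)\}$ with $x\in T$ descends to the original $\sigma$ on $T$, embedding $T$ as an invariant Borel subsystem of $Y$; since every ergodic measure of $X$ has period $2$ and $\pi$ is injective off $T\times\{0,1\}$, the totally ergodic measures of $Y$ are exactly those supported on this copy of $T$ and correspond bijectively to totally ergodic measures of $T$, yielding $Y_1\cong T_1$ almost-Borel.

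For (3), the plan is to apply Corollary \ref{cor.manySFTs} to $Y\setminus Y_1$ with universality sequence $u_X$, which equals $h(X)$ on even integers and $0$ on odd integers. Condition (1) for $p=2$ follows from Theorem \ref{prop.continuousfactorSFT}: for every $t<h(X)$ that theorem produces a period-$2$ irreducible SFT $X'\subseteq X$ with $h(X')>t$ on which $\pi$ is injective, giving an almost-Borel embedding of $X'$ into $Y\setminus Y_1$. Condition (1) for the remaining even $p$ follows because the $h(X')$-slice of each such $X'$ is strictly $(h(X'),2)$-universal (Proposition \ref{p.UnivMark}) and hence $(h(X'),p)$-universal (\Fact\ref{useqremark}), embedding irreducible period-$p$ Markov shifts of arbitrary entropy less than $h(X')$ into $Y\setminus Y_1$. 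For conditions (2)-(3), the strict inequality $h(T)<h(S)$ ensures $\pi_*\mu_X$ is a $2$-Bernoulli measure of entropy $h(X)$ on $Y$, and a standard entropy comparison along the countable-to-one factor $\pi$ shows it is the unique ergodic measure of $Y$ with entropy $\geq h(X)$. Corollary \ref{cor.manySFTs} then yields $Y\setminus Y_1$ almost-Borel isomorphic to a Markov shift, and Theorem \ref{mainthm.classif}, via the coincidence of the invariants $(u_X,\eta_X)$ (with $\eta_X(2)=1$ and $0$ elsewhere), identifies this Markov shift with $X$.

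The main technical obstacle is the choice of $S$: it must be a mixing irreducible SFT strictly containing $T$ in both entropy and as a subshift, which is what forces $X$ to have period exactly $2$ and keeps $\mu_X$ off the collapsed region so that $\pi_*\mu_X$ remains the unique MME of $Y$.
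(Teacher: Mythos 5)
Your construction is exactly the paper's: you take $X=S\times(\ZZ/2\ZZ)$ with the skew shift over a mixing SFT $S\supsetneq T$ of slightly larger entropy, and quotient by identifying the two sheets over $T$. The only divergence is in the verification of item (3), where you invoke \Thm\ref{prop.continuousfactorSFT} plus \Cor\ref{cor.manySFTs} and \Thm\ref{mainthm.classif}, whereas the paper argues more directly that $Y\setminus Y_1$ is the union of a strictly $(h(X),2)$-universal system (from period-$2$ SFTs built over mixing SFTs inside $S\setminus T$, on which $\pi$ is trivially injective) and a single period-$2$ Bernoulli MME; both routes are valid, so this is essentially the same proof.
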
 
                     
\begin{proof}  
We choose $(X,\sigma )$ of the form 
$X=X'\times (\ZZ/ 2\ZZ)$, with 
$\sigma : (x,g)\mapsto (\sigma x, g+1)$, 
where 
$(X',\sigma )$ is any mixing SFT into which 
$T$ continuously embeds with entropy arbitrarily close to $h(T)$.  Let $E'$ be the quotient relation 
of the map $T\times \ZZ/ 2\ZZ \to T$ defined 
by $(x,g)\mapsto x$. Let $E$ be the union of 
$E'$ and the diagonal of $X$. Define $Y$ as 
the quotient space $X / E$ (with quotient 
topology) and identify the image 
in $Y$ of $T\times \{0,1\}$ with $T$. 
Then $Y$ is compact metrizable, 
since $E$ is a closed equivalence relation
 (Proposition \ref{compactrelation}).  
Let us check that $Y$ is zero-dimensional.
For an $X'$ word $W_{-n}\dots W_n$, 
let $U_w=\{x\in X': x[-n,n]=W\}$. If $W$ 
is not a $T$-word, then $\pi U_W$ is clopen 
in $Y$; if $W$ is a $T$-word, then 
$\pi (W\times \ZZ / 2\ZZ)$ is clopen in $Y$. 
Therefore each point in  $Y$ has a neighborhood 
basis of clopen sets. 

The system $X'\setminus T$ contains 
mixing SFTs with entropy arbitrarily
close to $h(X)$. Hence $Y\setminus Y_1$ is 
the union of a 
 strictly 
$(h(X),2)$-universal Borel system and a  
period-2 Bernoulli measure of 
entropy $h(X)$.  Therefore $Y\setminus Y_1$ 
is almost-Borel isomorphic to $X$. The rest  is clear. 
\end{proof} 

We'll give two easy corollaries of Proposition \ref{badatperiod} 
which already show $Y_1$ can be very different from 
what can arise in a Markov shift. 

\begin{corollary} \label{badbyjk} 
Suppose $(W,\nu )$  is a totally ergodic, finite entropy, 
mea\-sure-pre\-serving system. 
 Then 
there is a period 2 irreducible SFT $X$ and 
a continuous, at most 2-to-1 factor map $\pi\colon X\to Y$
such that $Y_1$ is almost-Borel isomorphic to 
$(W,\nu)$.
\end{corollary}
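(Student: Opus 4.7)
The plan is to apply Proposition \ref{badatperiod} with a carefully chosen subshift $T$ whose totally ergodic part $T_1$ realizes $(W,\nu)$, the choice being provided by the Jewett-Krieger theorem.

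First, I would invoke Jewett-Krieger: any free ergodic measure-preserving system on a standard nonatomic probability space is measurably isomorphic to a strictly ergodic (uniquely ergodic and minimal) subshift. Applied to $(W,\nu)$---which, being totally ergodic, is either free or reduces to a fixed point, the latter being trivially handled by choosing any nontrivial strictly ergodic $T$---this yields a strictly ergodic subshift $T$ with unique invariant probability measure $\mu$, together with a measure-theoretic isomorphism $(T,\mu) \simeq (W,\nu)$.

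Next, I would check that $T_1$, the Borel subsystem arising from the Borel periodic decomposition (\Thm\ref{thm:specdec}) and carrying exactly the ergodic measures of maximum period $1$, is almost-Borel isomorphic to $(W,\nu)$. Total ergodicity is a measure-theoretic invariant and is equivalent to maximum period equal to $1$, so $\mu$ itself has maximum period $1$. By unique ergodicity, $\mu$ is the only element of $\pen(T)$, hence $\mu(T_1)=1$ and the inclusion $T_1\hookrightarrow T$ is an almost-Borel isomorphism. Viewing $T_1$ as an almost-Borel system carrying the single nonatomic ergodic measure $\mu$, it is then almost-Borel isomorphic to $(W,\nu)$.

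Finally, I would apply Proposition \ref{badatperiod} to this $T$ to obtain a period $2$ irreducible SFT $X$ and a continuous, at most $2$-to-$1$ factor map $\pi:X\to Y$ onto a zero-dimensional metrizable system, with $Y_1$ almost-Borel isomorphic to $T_1$; composing the two isomorphisms delivers $Y_1 \simeq (W,\nu)$ in the almost-Borel sense, as required. There is no genuine obstacle: the argument is essentially a packaging of Proposition \ref{badatperiod} with the symbolic realization provided by Jewett-Krieger, the only point requiring care being the standard translation between measure-preserving and almost-Borel notions of isomorphism for a system with a single ergodic measure.
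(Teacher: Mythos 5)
Your proof is correct and is essentially the paper's own: the paper's proof reads simply ``This follows from Prop~\ref{badatperiod} and the Jewett--Krieger Theorem,'' and you have unpacked exactly that chain (Jewett--Krieger realizes $(W,\nu)$ as a strictly ergodic subshift $T$ whose unique measure $\mu$ has maximum period $1$, so $T_1$ carries only $\mu$ and is almost-Borel isomorphic to $(W,\nu)$, and then Prop.~\ref{badatperiod}(4) transfers this to $Y_1$). One small slip: in the degenerate case where $(W,\nu)$ is a fixed point, you propose taking ``any nontrivial strictly ergodic $T$,'' but a nontrivial strictly ergodic subshift whose unique measure is totally ergodic (e.g.\ a Sturmian shift) would make $T_1$, hence $Y_1$, carry a nonatomic measure, so $Y_1$ would \emph{not} be almost-Borel isomorphic to the fixed point; the correct choice in that trivial case is a $T$ whose $T_1$ carries no nonatomic measure (e.g.\ a single periodic orbit), though of course this degenerate case is of no real interest.
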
 
\begin{proof} 
This follows from \Prop\ref{badatperiod} 
and the Jewett-Krieger Theorem. 
\end{proof}  

Let $R$ be the map on $\mathbb T^2 $ defined 
by $(t,y)\mapsto (t,y+t)$. 
Let $ P_0 = \{ (x,y)\in \mathbb T^2\colon 
0\leq x \leq y  \leq 1
\}$ and $P_1= \mathbb T^2 \setminus P_0$.
Let $Z$ be the subshift 
on symbols $0,1$ which is the 
closure of $R$-itineraries   through 
the partition $\{P_0 ,P_1 \}$. 
$Z$ is a disjoint union of Sturmian shifts (one for each 
irrational rotation) and countably many periodic orbits. 
Now $Z_1$ is the restriction of $Z$ to the complement of the 
periodic orbits of period greater than 1 (including exactly 
one copy of each Sturmian shift and a fixed point). 

\begin{corollary} \label{badfinite}
Suppose $(W,\nu )$  is a weakly mixing, finite entropy, 
ergodic transformation. 
There is a period 2 irreducible SFT $X$ and 
a continuous at most 2-to-1 factor map $\pi\colon X\to Y$, 
such that 
$Y_1$ is almost-Borel isomorphic to 
$Z_1 \times (W,\nu )$. In particular, the measures of $Y_1$
are uncountably many and have entropy $h(W)$.
\end{corollary}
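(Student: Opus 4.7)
The plan is to apply \Prop\ref{badatperiod} to a subshift $T$ whose totally ergodic part realizes $Z_1 \times (W,\nu)$ up to almost-Borel isomorphism.

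First, I would use the Jewett--Krieger theorem to realize the ergodic finite-entropy system $(W,\nu)$ as a strictly ergodic subshift $(S,\sigma_S,\nu)$, and set $T := Z \times S$ under the product shift. Since $T$ is a subshift, \Prop\ref{badatperiod} supplies a period-$2$ irreducible SFT $X$ together with a continuous, at most $2$-to-$1$ factor map $\pi : X \to Y$ for which $Y_1$ is almost-Borel isomorphic to $T_1$.

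The crux is the identification of $T_1$. Every ergodic invariant measure of $T$ projects to an ergodic measure on each factor and, by unique ergodicity of $S$, projects to $\nu$ on $S$. The ergodic measures on $Z$ are (i) for each irrational $\alpha$, the Sturmian measure $\mu_\alpha$, which is measurably isomorphic to the Kronecker rotation $R_\alpha$, and (ii) countably many periodic-orbit measures. Since $(W,\nu)$ is weakly mixing, Furstenberg's disjointness theorem asserts that $(W,\nu)$ is disjoint from every Kronecker system; therefore the only ergodic joining of $\mu_\alpha$ with $\nu$ is the product $\mu_\alpha \times \nu$, which is weakly mixing, hence totally ergodic. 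For a periodic orbit in $Z$ of period $p > 1$, any ergodic joining with $(S,\nu)$ admits an eigenfunction with eigenvalue $e^{2\pi i/p}$ that is constant on each cyclically moving piece, so $p$ lies in its rational spectrum and the joining is not totally ergodic. Combined with the fixed point of $Z$ joined with $\nu$, this identifies the ergodic measures on $T_1$ as $\{\mu_\alpha \times \nu : \alpha \text{ irrational}\} \cup \{\delta_{\mathrm{fix}} \times \nu\}$, the same family as carried by $Z_1 \times S$ (again by Furstenberg disjointness applied to the ergodic measures of $Z_1$).

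Because the complement $(Z \setminus Z_1) \times S$ inside $T$ carries only ergodic measures of period $>1$ by the analysis above, the inclusion $Z_1 \times S \hookrightarrow T$ descends to an almost-Borel isomorphism onto $T_1$. Interpreting $Z_1 \times (W,\nu)$ as the Borel system $Z_1 \times S$, this gives the required almost-Borel isomorphism $Y_1 \cong Z_1 \times (W,\nu)$. Uncountability of the measures of $Y_1$ is immediate from the uncountability of the irrationals, and the entropy claim follows from $h(\mu_\alpha \times \nu) = h(\mu_\alpha) + h(\nu) = 0 + h(W) = h(W)$, using that Sturmian subshifts have zero topological entropy (and similarly $h(\delta_{\mathrm{fix}} \times \nu) = h(W)$). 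The main obstacle I anticipate is the joining analysis: invoking Furstenberg's disjointness theorem cleanly to force decoupling into products, and confirming that no totally ergodic measures leak in from the periodic orbits of $Z$ of period $> 1$; once this is in hand, the remainder reduces to comparing supports and invoking \Prop\ref{badatperiod}.
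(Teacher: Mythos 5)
Your proof is correct and takes the same route as the paper: Jewett--Krieger to replace $(W,\nu)$ by a strictly ergodic shift $W'$, set $T=Z\times W'$, apply \Prop\ref{badatperiod}, and identify $T_1$ with $Z_1\times W'$. You are more explicit than the paper where it is terse: the paper only says ``a product of irrational rotation (or fixed point) and weakly mixing remains totally ergodic,'' while you spell out that one also needs Furstenberg disjointness (Kronecker systems are disjoint from weakly mixing ones) to know that \emph{all} ergodic measures on $Z_1\times W'$, and on (periodic orbit)$\times W'$, are the obvious product measures; without that, one would have to rule out exotic joinings. Spelling this out is a genuine improvement in exposition.

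One small slip: $\mu_\alpha\times\nu$ is \emph{not} weakly mixing --- the Sturmian factor is Kronecker, so $\mu_\alpha\times\nu$ retains the nontrivial point spectrum $\{e^{2\pi i n\alpha}\}_{n\in\ZZ}$. What is true, and what the argument actually needs, is that $\mu_\alpha\times\nu$ is \emph{totally ergodic}: since $\alpha$ is irrational its eigenvalues are never roots of unity, and the weakly mixing factor contributes no eigenvalues, so $\sprat(\mu_\alpha\times\nu)=\{1\}$. Replace ``which is weakly mixing, hence totally ergodic'' by ``which is totally ergodic, as its eigenvalue group $\{e^{2\pi i n\alpha}\}$ contains no nontrivial roots of unity'' and the argument is clean.
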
 
\begin{proof} 
By the Jewett-Krieger 
Theorem, let $W'$ be a strictly ergodic 
shift, which with its  invariant measure 
 is isomorphic to $(W,\nu)$. 
Set $T$ in \Prop\ref{badatperiod} 
to be $Z\times W'$. A product of irrational rotation (or fixed point) and 
weakly mixing remains totally ergodic so $Y_1$ and $T_1$ are
isomorphic to $Z_1\times W'$.
\end{proof}  

Obviously,  the possible almost-Borel structure of $Y_1$ in 
\Prop\ref{badatperiod}  can be much more 
varied than shown in the two corollaries.

\section{$C^{1+}$ surface diffeomorphisms} \label{sec:surface} 

\subsection{Sarig's Symbolic Dynamics}

For each compact surface $C^{1+}$-diffeo\-mor\-phism $f:M\to M$ and 
number $\chi>0$,
Sarig \cite{Sarig} defined
$\pih,\Sigmah,\Sigmah^\#, \mathcal R, \sim$ such that
$\Sigmah$ is a Markov shift with countable alphabet 
$\mathcal R$;   
$\pih$ is a Borel factor map from 
$\Sigmah$ into $M$; and there is a relation 
on the elements of $\mathcal R$  of being ``affiliated'' 
(which we will write as $\sim $). We note that
$\Sigmah^\#$ (the \lq\lq regular set\rq\rq) 
is the \sarig\ set $\Sigmah_{\pm \text{ret}}$ of  
Definition \ref{defnxret}. 

\begin{summary} \label{summary} 
The items above satisfy the following.  
\begin{enumerate}
 \item 
If $\mu \in \pe (f)$ and has both its positive and 
negative Lyapunov exponents  outside $(-\chi,\chi)$, 
then  $\mu \pih(\Sigmah^\#)=1$.
 \item 
If $\mu \in \pe (f)$ and 
$h(f,\mu) \geq \chi$, 
then  $\mu \pih(\Sigmah^\#)=1$.
 \item Each point $z\in\pih(\Sigmah^\#)$ has only finitely many
   preimages in $\Sigmah^{\#}$.
\item 
$\pih$ is Bowen type on $\Sigmah^\#$ for the relation $\sim$ (see Defn.\ \ref{def.AlmostBowen}).
\item 
For all $R\in\mathcal R$, $\{R'\in\mathcal R:R'\sim R\}$ is finite.
\item 
$\pih$ is H\"older-continuous. 
\item 
$\hat\Sigma$ is locally compact.  
\end{enumerate}
\end{summary} 
This symbolic dynamics is an embarassment of 
riches.
To apply 
Theorem \ref{bowentypefactors}, 
we only need that  
$\pih$ is finite-to-one 
Bowen type on $ \Sigmah^\#$, 
which follows from 
(3,4). Properties (5,6,7) are given for context. 

Properties (1,2) are of course 
essential to relating the symbolic 
dynamics to the diffeomorphism. We note that the main theorems of
\cite{Sarig} quote property (2). 
This is weaker than (1): as is
well-known (see \cite{Katok1980}), for a  surface diffeomorphism, an
ergodic measure with nonzero entropy
must have no zero
Lyapunov exponent. 
However the proofs deal with the set
$\operatorname{NUH}_\chi(f)$ which is defined \cite[p. 348]{Sarig} in
terms of the exponents, not the entropy, which is never used in the
rest of the paper.\footnote{The author has confirmed to us that 
the \emph{remark on $\chi$-largeness}
\cite[p.344]{Sarig} 
contains a misstatement:  there, 
\lq\lq both Lyapunov exponents\rq\rq\
  should replace 
\lq\lq at least one Lyapunov
exponent\rq\rq.}

We will see below that 
the properties in the summary 
are explicitly or 
essentially contained 
in \cite{Sarig}. 

\subsection{The theorem for surface diffeomorphisms}

\begin{theorem} \label{surfacetheorem}
Every $C^{1+}$ surface diffeomorphism 
$(X,f)$ is the union of two Borel subsystems $Y$ and $Z$ such that:
 \begin{itemize}
  \item $Y$ is almost-Borel isomorphic to a Markov shift;
  \item $Z$ carries only zero entropy measures. 
 \end{itemize}

Moreover,  a
nonatomic ergodic measure is carried by $Z$  if
  and only if it satisfies all of the following conditions:
 \begin{enumerate}[(i)]
  \item its entropy is zero;
  \item at least one of the Lyapunov exponents is zero;
  \item it has no period which is the maximal period of an ergodic, invariant probability with positive entropy.
 \end{enumerate}
\end{theorem}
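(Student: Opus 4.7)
The plan is to build $Y$ as an almost-Borel Markov shift in two stages and take $Z$ to be its complement: first I would capture all positive-entropy and ``fully hyperbolic'' (both Lyapunov exponents nonzero) ergodic measures via Sarig's symbolic dynamics; then enlarge $Y$ to absorb exactly those zero-entropy measures whose periods are forced to live inside the Markov structure, using the universality machinery of Sections~\ref{sec.univ}--\ref{sec:markov}.

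Fix a sequence $\chi_n \searrow 0$. By items (3) and (4) of the Summary, each $\hat\pi_{\chi_n}$ is finite-to-one and Bowen type on $\hat\Sigma_{\chi_n}^\#$, so \Thm\ref{bowentypefactors} applies and $Y_n := \hat\pi_{\chi_n}(\hat\Sigma_{\chi_n}^\#)$ is almost-Borel isomorphic to a Markov shift. By \Cor\ref{markovunion}, the Borel union $Y_0 := \bigcup_n Y_n$ is likewise almost-Borel isomorphic to a Markov shift, which I will decompose (up to an almost null set) into irreducible components $T_i$ with periods $p_i$ and entropies $h_i$. Summary items (1) and (2) ensure that $Y_0$ already carries every nonatomic ergodic measure with positive entropy or with both Lyapunov exponents nonzero. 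Setting $u := u_{Y_0}$, so $u(p) = \sup\{h_i : p_i \mid p\}$ by \Lem\ref{lem.u-eta}, I will define $W_p := C_p \cap V^{u(p)}$ for each $p$ with $u(p) > 0$, using the Borel periodic decomposition \Thm\ref{thm:specdec} together with the Borel entropy stratum $V^t$ characterized by $\mu(V^t) = 1 \iff h(f,\mu) < t$. Every nonatomic ergodic measure carried by $W_p$ has period $p$ and entropy strictly less than $u(p)$, so \Cor\ref{useqtheorem}(3) gives that $Y := Y_0 \cup \bigcup_{p : u(p) > 0} W_p$ is almost-Borel isomorphic to $Y_0$, hence to a Markov shift. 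Setting $Z := X \setminus Y$, the inclusion of all positive-entropy ergodic measures in $Y_0$ shows that $Z$ carries only zero-entropy measures.

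For the characterization of nonatomic ergodic $\mu \in Z$, the forward direction is essentially bookkeeping: $\mu \notin Y_0$ gives (i) and (ii) by the contrapositive of Summary (1)--(2), and $\mu \in V^{u(p)}$ (since $h(f,\mu) = 0 < u(p)$) together with $\mu \notin W_p$ forces no $p$ with $u(p) > 0$ to be a period of $\mu$. The converse will be the main obstacle, relying on two nontrivial inputs. First, any $\mu$ with a zero Lyapunov exponent must satisfy $\mu(Y_0) = 0$, which I would justify by the Pesin-theoretic fact that $\hat\pi_{\chi_n}(\hat\Sigma_{\chi_n}^\#)$ is contained in the set of points whose Oseledec exponents all have absolute value at least $\chi_n$. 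Second, one must identify $P^+_{\max} = \{p : u(p) > 0\}$, where $P^+_{\max}$ denotes the set of maximal periods of positive-entropy nonatomic ergodic measures: the inclusion $\subset$ is immediate from \Fact\ref{f.MSmaxper}, while the reverse inclusion requires, for each $p$ with $u(p) > 0$, exhibiting a positive-entropy ergodic measure with maximal period exactly $p$ inside some $T_i$ satisfying $h_i > 0$ and $p_i \mid p$, by embedding a suitable period-$(p/p_i)$ irreducible Markov subshift into a cyclic component of $T_i^{p_i}$ using \Fact\ref{f.irred-hp}.
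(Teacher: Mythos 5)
Your construction follows the paper's proof very closely, and it is correct. The overall route is identical: apply Sarig's coding for $\chi_n\searrow 0$, invoke Theorem~\ref{bowentypefactors} and Corollary~\ref{markovunion} to build a Markov-shift core $Y_0$, then enlarge $Y_0$ by sets of well-understood zero-entropy measures using the Borel periodic decomposition and Corollary~\ref{useqtheorem}(\ref{absorbing}). The one substantive difference is the choice of the enlarging sets: you take $W_p:=C_p\cap V^{u(p)}$ (period $p$, entropy $<u(p)$), whereas the paper takes $Y_p:=X'_p\cap X''$ where $X''$ is the invariant Borel set on which a Lyapunov exponent vanishes, and indexes by the set $P$ of maximal periods of positive-entropy ergodic measures defined \emph{directly}. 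Since (ii) forces (i) for $C^{1+}$ surface diffeomorphisms, the measures picked up by $Y_p$ are a subset of those picked up by $W_p$, and both sets absorb into $Y_0$ by the same universality argument, so either choice yields a correct $Y$.

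The tradeoff is that by indexing your $W_p$ by $\{p:u(p)>0\}$ instead of $P$ you need the full equality $P=\{p:u_{Y_0}(p)>0\}$, whereas the paper only uses the inclusion $P\subset\{p:u_{Y_0}(p)>0\}$ (to guarantee that $W_p$-measures can be absorbed). Your argument for the reverse inclusion (embed a small positive-entropy irreducible period-$p$ Markov shift into the $h_i$-slice of some component $T_i$ with $p_i\mid p$ via Proposition~\ref{p.UnivMark}, then apply Fact~\ref{f.MSmaxper}) is valid but is extra work that the paper's indexing sidesteps. Two other small remarks. First, Theorem~\ref{bowentypefactors} is stated for $\bar X=\bigcup_Z Z_{\pm\mathrm{ret}}$, the union of the Sarig sets of the irreducible components, not for $X_{\pm\mathrm{ret}}=\hat\Sigma^\#$ itself; those two sets carry the same ergodic measures but are not equal, and the paper is careful to write $\overline\Sigma_n$ rather than $\hat\Sigma_{\chi_n}^\#$ when applying the theorem. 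Second, your Pesin-theoretic claim that $\hat\pi_{\chi_n}(\hat\Sigma_{\chi_n}^\#)$ is contained in the set of points with all exponents outside $(-\chi_n,\chi_n)$ is a reasonable route to show $\mu(Y_0)=0$ under (ii), but it is not strictly necessary: one can instead observe that any nonatomic ergodic measure carried by $Y_0$ is carried by some irreducible component $T_i$ of the associated Markov shift, and so either has positive entropy (failing (i)) or, if zero entropy, has a period $p$ with $u_{Y_0}(p)\ge h_i>0$, hence $p\in P$ and (iii) fails. Overall, your proposal is a correct reworking of the paper's argument with slightly different but equivalent bookkeeping, and it spells out the backwards direction of the characterization more explicitly than the paper does.
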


\begin{remark}
The conditions (i)-(iii) are not independent. As discussed above, (ii)
implies (i). 
Also (iii) is equivalent to:
 \begin{enumerate}
  \item[(iii')] the measure has no period which is the maximal period
    of a nonatomic, ergodic, invariant probability which has no 
zero Lyapunov exponent. 
 \end{enumerate} 
\end{remark}

\begin{remark}\label{hyperbolicremark}
Note that the ``universal'' part of $Y$ above 
could alternately be argued from Corollary \ref{cor.manySFTs} and Katok's horseshoes (see \cite{BuzziLausanne}, where this is
done in any dimension, assuming no zero Lyapunov exponents). But to 
control measures with entropy maximal at a period, we depend 
 on Sarig's symbolic 
dynamics. 
\end{remark} 


\begin{proof}[Proof of \Thm\ref{surfacetheorem}] 
For $\chi=1/n$, we apply Sarig's work to get a Markov shift 
$\Sigmah_n$ and  factor map 
$\pih_n:\Sigmah_n\to X$ 
satisfying \ref{summary}(1-4). 
Let $\overline{\Sigma}_n$ be the union of the Sarig regular sets of all irreducible components of $\Sigma_n$.
By properties  \ref{summary}(3,4) and 
\Thm\ref{bowentypefactors}, 
$\hat{Y}_n:=\pih_n (\overline{\Sigma}_n)$ 
 is
 almost-Borel
isomorphic to a Markov shift.
Let $Y_0=\cup_n \hat{Y}_n$; 
by \Cor\ref{markovunion}, $Y_0$ is almost-Borel isomorphic to a 
Markov shift.


If $\mu\in\pen(f)$ satisfies neither (i), nor (ii), then, by properties \ref{summary}(1,2), there exists $\hat\mu\in\pen(\Sigmah_n)$ with $\pih_*(\hat\mu)=\mu$. In particular, $\hat\mu(Z)=1$ for some irreducible component of $\Sigmah_n$, so $\hat\mu(Z_{\pm\ret})=1$, and therefore $\mu(Y_0)=1$.
We enlarge $Y_0$ into
  $Y$ carrying all measures not satisfying 
all of (i)-(iii) as follows. 

First, let
  $\lambda^u(x):=\limsup_{n\to\infty}\frac1n\log\|(f^n)'(x)\|$. It is
  a Borel function such that, for all $\mu\in\Proberg(f)$, for
  $\mu$-a.e. $x\in X$, $\lambda^u(x)$ is the largest exponent of
  $\mu$. By this observation (and the same applied to the smallest
  exponent), we get an invariant  Borel subset $X''$ which has full
  measure for $\mu\in\Proberg(f)$ if and only if $\mu$ has a zero
  Lyapunov exponent.

Now let $P$ be the set of integers $p\geq 1$ 
such that there is some  ergodic, invariant probability measure $\mu$
with nonzero entropy with maximal period $p$. 
For each $p$ in $P$, $\Sigma$ contains an irreducible Markov shift
 $\Sigma_p$ with some period dividing $p$ and positive entropy, and  
therefore $u_{Y_0}(p)>0$. 
For each $p\in P$, the Borel periodic decomposition (\Thm\ref{thm:specdec})  
provides an invariant  Borel subset
$X'_p$ of $X$ such that for $\mu \in \pe (X)$, 
$\mu (X'_p) =1$ if and only if $p$ is a period of $\mu$. 
Define $Y_p:=X'_p\cap X''$ and 
 $Y:=Y_0\cup\bigcup_{p\in P} Y_p$. 
Because all measures on $Y_p$ have zero
 entropy and $u_{Y_0}(p)>0$ for $p$ in $P$, 
by
Corollary \ref{useqtheorem}(\ref{absorbing})
the systems $Y$ and $Y_0$ are almost-Borel isomorphic. 

Thus $X=Y\sqcup Z$, with $Z:=X\setminus Y$, is an invariant, Borel decomposition such that $Y$ satisfies (1) and (2) and carries any $\mu\in\pen(f)$ failing to satisfy one of (i),(ii),(iii). Conversely,  $\mu(Z)>0$ implies (i), (ii), and $\mu(Y_p)=0$ for all $p\in P$, hence (iii).
\end{proof}

As an invariant, ergodic probability measure with trivial rational spectrum has maximal period equal to $1$, this yields:

\begin{corollary}
Consider a positive entropy, $C^{1+}$ diffeomorphism of a compact surface. 

It is almost-Borel
isomorphic to a  Markov shift if it has a totally ergodic  measure
with positive entropy. 

It is  almost-Borel isomorphic to a mixing Markov shift if it has a totally ergodic measure which is the 
unique measure of maximum entropy. 
\end{corollary}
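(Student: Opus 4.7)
The plan is to derive both statements from the decomposition $X=Y\sqcup Z$ given by Theorem \ref{surfacetheorem} together with the classification in Theorem \ref{mainthm.classif}.

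For the first statement, assume $\mu_0\in\pen(f)$ is totally ergodic with $h(f,\mu_0)>0$. Total ergodicity means the maximum period of $\mu_0$ equals $1$, so $1$ belongs to the set $P$ of maximum periods of positive-entropy ergodic measures appearing in the proof of Theorem \ref{surfacetheorem}. Since $1$ is a period of every ergodic measure (the constant function is an eigenfunction with eigenvalue $e^{2i\pi}=1$), condition (iii) of Theorem \ref{surfacetheorem} fails for every $\mu\in\pen(f)$. Hence $Z$ carries no nonatomic ergodic measures, so $X$ is almost-Borel isomorphic to $Y$, which is a Markov shift.

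For the second statement, assume further that $\mu_0$ is the unique measure of maximum entropy, and set $h:=h(f,\mu_0)=h(f)$. By the first part, $X$ is almost-Borel isomorphic to some Markov shift, so by Lemma \ref{lem.u-eta} it suffices to compute $u_f$ and $\eta_f$ and compare them with the invariants of a mixing Markov shift of entropy $h$ carrying a measure of maximum entropy (such a shift exists by Fact \ref{f.irred-hp}). First, I check $\mu_0\notin X_U$: otherwise Proposition \ref{uppartexists} would give a period $p$ of $\mu_0$ with $h<u_f(p)$, but the only period of $\mu_0$ is $1$ and $u_f(1)\leq h(f)=h$, forcing $h<u_f(1)\leq h$. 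So $\mu_0\in\pen(X\setminus X_U)$, and by Theorem \ref{thm.MarkovChar} the measure $\mu_0$ is $1$-Bernoulli with entropy $u_f(1)$, so $u_f(1)=h$. Combining Fact \ref{useqremark} with the bound $u_f(p)\leq h$ then gives $u_f(p)=h$ for all $p\geq 1$.

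For $\eta_f$: every $p$-Bernoulli measure in $\pen(X\setminus X_U)$ has entropy $u_f(p)=h$, hence is a measure of maximum entropy. The uniqueness of $\mu_0$ forces $\eta_f(1)=1$ (with $\mu_0$ the unique representative) and $\eta_f(p)=0$ for $p\geq 2$, since any $p$-Bernoulli with maximum period $p>1$ would be a second mme distinct from $\mu_0$. These are exactly the invariants of a mixing Markov shift of entropy $h$ with an mme, so Theorem \ref{mainthm.classif} yields the desired almost-Borel isomorphism. The only mildly subtle step is the exclusion $\mu_0\notin X_U$ via the entropy-period obstruction of Proposition \ref{uppartexists}; the rest is bookkeeping against the definitions.
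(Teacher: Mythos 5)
Your proof is correct, and it follows the same route the authors intend: use Theorem \ref{surfacetheorem} to kill the complementary part $Z$, then pin down the Markov shift's isomorphism class via the invariants $(u_f,\eta_f)$ and Theorem \ref{mainthm.classif}. The paper itself gives only a one-line justification (``an ergodic measure with trivial rational spectrum has maximal period $1$''), so most of what you wrote is an expansion of what the authors left implicit; the expansion is sound.

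Two small presentational notes, neither of which is a mathematical gap. First, for the exclusion $\mu_0\notin X_U$, the cleanest reference is Theorem \ref{univpartexists}(2) (equation \eqref{eq.uni}), though Prop.~\ref{uppartexists}(1) gives what you need once you observe that $\mu_0(X_U)=1$ forces $\mu_0(X_{Up})=1$ for some $p$ (the $X_{Up}$ are invariant and $\mu_0$ is ergodic). Second, the bound $u_f(p)\leq h(f)$ that you use is implicit; it is worth one clause (a $(t,p)$-universal subsystem with $t>h(f)$ would carry measures of entropy $>h(f)$, contradicting the definition of Borel entropy). With those two sentences added, the argument is complete and matches the paper's framework exactly.
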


\begin{remark}
The situation of the corollary occurs in some natural settings. In particular, Berger \cite{Berger} has shown that for a positive Lebesgue measure subset of parameters, H\'enon maps have a unique measure of maximal entropy that is mixing. Their invariant measures are carried by a forward invariant compact disk and therefore one can apply the above corollary: these H\'enon maps are almost-Borel isomorphic to a mixing Markov shift. In particular, they are $h$-universal, where $h$ is their Borel entropy (equal to their topological entropy after restricting to the invariant disk).
\end{remark}

%

\subsection{Proof of the properties of Sarig's construction}
We now discuss how the Summary \ref{summary} properties 
come from Sarig's paper. For (1,2,3,6,7),
see  \cite[Theorems 1.3, 12.5, 12.8]{Sarig}. 
Property  (5) is a statement within the proof of Lemma 12.7. 
To explain (4), 
we need some facts and notations from Sarig's paper \cite{Sarig}.

\subsubsection*{The set $\mathcal V$ of Pesin charts and the Markov shift $\Sigma(\mathcal G)$}

Sarig builds a countable collection $\mathcal V$ of triplets $(\Psi_x,p^s,p^u)$ where $p^s,p^u>0$ and $\Psi_x$ is a Pesin chart defined using the Oseledets theorem applied at point $x$. Charts are diffeomorphisms on their image with Lipschitz constant at most $2$ and the domain of $\Psi_x$ contains $(-p^s,p^s)\times(-p^u,p^u)$. We often write $p$ for $\min(p^u,p^s)$ and, following Sarig, write the triplet as $\Psi_x^{p^s,p^u}$ and continue to call it a chart (despite the extra information $p^u,p^s$). 

Sarig defines a graph $\mathcal G$ over $\mathcal V$.  In particular,
$\Psi_x^{p^s,p^u}\to\Psi_y^{q^s,q^u}$ in $\mathcal G$ implies that, at
least on the rectangle $(-10p,10p)$, $f_{x,y}:=\Psi_y^{-1}\circ f\circ\Psi_x$ is uniformly hyperbolic and $\Psi_y^{-1}\circ\Psi_x$ is very close to 1. More precisely, for $(u,v)\in(-p^s,p^s)\times(-p^u,p^u)$
 $$
  f_{x,y}(u,v)=  (A_{x,y}u,B_{x,y}v)+ h(u,v)
 $$
with $C_f^{-1}<|A_{x,y}|<e^{-\chi}, e^{\chi}<|B_{x,y}|<C_f$ and
$\| h(0) \| \leq\eps q$ and $\|h'(0)\|\leq 2\eps p^{\beta/3}<\eps$ (see \cite[Prop. 3.4, p.14]{Sarig}).

It follows that, for any sequence $\underline{v}=(\Psi_{x_n}^{p_n^s,p_n^u})_{n\in\ZZ}\in\Sigma(\mathcal G)$, there is a unique sequence $\underline{t}\in(\RR^2)^\ZZ$ such that
 $$f_{x_n,x_{n+1}}(t_n)=t_{n+1}\in B(0,p_{n+1})
 $$
for all $n\in\ZZ$. The projection $\pi:\Sigma(\mathcal G)\to M$ defined by Sarig \cite[Proposition 4.15, Theorem 4.16]{Sarig} satisfies: $\pi(\underline{v})=\Psi_{x_0}^{p^u,p^s}(t_0)$ and $t_n\in B(0,p_n/100)$ for all $n\in\ZZ$.

According to \cite[Theorem 5.2]{Sarig}, if
$\pi(\underline{v})=\pi(\underline{w})$ for
$\underline{v},\underline{w}\in\Sigma(\mathcal G)^\#$, then, for each
integer $n\in\ZZ$, the charts $v_n=\Psi_{x_n}^{p_n^u,p_n^s}$
 and $w_n=\Psi_{y_n}^{q_n^u,q_n^s}$ are very close:
 on $B(0,\eps)$ ($\eps$ is much larger than $p,q$, see \cite[Def.2.8
 and Lem.2.9]{Sarig}) 
  \begin{equation}\label{eq.closePsi2}
    \Psi_{y_n}^{-1}\circ\Psi_{x_n}(t) = \pm t+\delta(u) \text{ where }
    \|\delta(0)\|<q_n/10,\; \|\delta'\|\leq \eps^{1/3}.
 \end{equation}

\subsubsection*{Cover $\mathcal Z$ by large rectangles}

Sarig then defines a cover:
 $$
   \mathcal Z:=\left\{ Z(v):v\in\mathcal V\right\} \text{ with }Z(v):=\{\pi(\underline{v}):\underline{v}\in\Sigma(\mathcal G)^\#,\; v_0=v\}
 $$
Proposition 4.11 of \cite{Sarig} implies that $\Psi_x^{-1}(Z(v))\subset B(0,q/100)$, well inside the domain of the chart.
 
\subsubsection*{Partition $\mathcal R$ by small rectangles}
Sarig refines the cover $\mathcal Z$ into a \lq\lq Markov
partition\rq\rq\ $\mathcal R$, following an elaborate version of the
Bowen-Sina\u{\i}  
 construction used in the uniformly hyperbolic case. $\Sigmah$ is then the Markov shift defined by the countable oriented graph with vertices $R\in\mathcal R$ and arrows $(R,R')\in\mathcal R^2$ if and only if $f(R)\cap R'\ne\emptyset$. The map $\pih:\Sigmah\to M$ satisfies:
  $$
     \{\pih((R_n)_{n\in\ZZ})\}=\bigcap_{n\in\ZZ} f^{-n}(\overline{R_n}) = \bigcap_{n\in\ZZ} f^{-n}(\overline{Z_n}) 
  $$
for some $Z_n\in\mathcal Z$, $Z_n\supset R_n$.

\subsubsection*{Affiliated small rectangles}

Sarig defines two small rectangles $R,R'\in\mathcal R$ to be {\bf affiliated} (see before Lemma 12.7 in \cite{Sarig}) when there are two large rectangles $Z,Z'\in\mathcal Z$ such that:
 $$
     R\subset Z,\; R'\subset Z' \text{ and } Z\cap Z'\ne\emptyset.
 $$

\begin{proof}[{\bf Proof of \ref{summary}(4)}]
Claim 2 in the proof of Theorem 12.8 in \cite{Sarig} asserts precisely that, for $R,R'\in\Sigmah$, if $\pih(R)=\pih(R')\in\pih(\Sigmah^\#)$ then $R_n$ and $R_n'$ are affiliated for each $n\in\ZZ$. Thus, it suffices to prove: for all $R,R'\in\Sigmah$, if $R_n$ and $R_n'$ are affiliated for each $n\in\ZZ$, $\pih(R)=\pih(R')$. Let $x=\pih(R)$, $y=\pih(R')$. For each $n\in\ZZ$, writing $Z_n=Z(\Psi_{x_n}^{p_n^s,p_n^u})$,
 $$
   f^nx\in\overline{R_n}\subset\overline{Z_n}\text{ and }
     t_n:=\Psi_{x_n}^{-1}(f^nx)\in\Psi_{x_n}^{-1}(Z_n)\subset B(0,p_n/100).
 $$ 
Likewise,
 $$
    u_n:=\Psi_{y_n}^{-1}(f^ny)\subset B(0,q_n/100).
 $$
Now, using $q_n\leq e^{\eps^{1/3}}p_n$ and eq.  \eqref{eq.closePsi2}, we get, for all $n\in\ZZ$,
 $$
     u'_n:=\Psi_{x_n}^{-1}\circ\Psi_{y_n}(u_n)\in B(0,p_n/10+(1+e^{\eps^{1/3}})p_n/100)\subset B(0,p_n)
 $$
so $u'_{n+1}=F_n(u'_n)$ where $ F_n:=\Psi_{x_{n+1}}^{-1}\circ f
\circ\Psi_{x_n}$. The uniform hyperbolicity of these maps on their
domains $B(0,p_n)$ implies that $u'_n=t_n$ for all $n\in\ZZ$. In
particular, $x=y$. 
\end{proof}



\subsection{Classification from measures of given maximum period}\label{sec.proofSurfClass}

\begin{proof}[Proof of Theorem \ref{mainthm.surfclass}]
Isomorphic diffeomorphisms have equal data (1) and (2), since those only depend on positive entropy measures. We turn to the converse.
By Theorem \ref{mainthm.conjug}, it suffices to classify the isomorphic Markov shifts up to almost-Borel isomorphism. By Theorem \ref{mainthm.classif}, it suffices to show that the data (1) and (2) are equal to $\bar u_S(\cdot)$ and $\bar \eta_S(\cdot)$ for any isomorphic Markov shift $S$. We fix $p\geq1$ and use Fact \ref{f.MSmaxper}.

First the Fact implies that $\bar u_S(p)$ is indeed equal to the supremum in (1). Second, let $\mathcal M(p)$ be the measures counted in (2) and $\mathcal S(p)$ be the irreducible subshifts counted by $\bar\eta_S(p)$. Associate to any $\mu\in\mathcal M(p)$ the irreducible shift $\Sigma_i$ carrying its image in $S$. 

The Fact implies  $p_i|p$, hence $h_i\leq \bar u_S(p)$ so $\mu$ is a
\mme\ of $\Sigma_i$. Thus $p_i=p$ and $\Sigma_i\in\mathcal S(p)$.
Since the \mme\ of $\Sigma_i$ is unique, $\mu\mapsto\Sigma_i$ is
injective. Conversely, for any $\Sigma_i\in\mathcal S(p)$, (the image
on the surface of) its \mme\ belongs to $\mathcal M(p)$. Hence,
$\mu\mapsto\Sigma_i$ is a bijection and $\#\mathcal
M(p)=\bar\eta_S(p)$.
\end{proof}

\section{Open problems} \label{sec:summary}

We  select and discuss a few  open problems.
Observe that the universality results in this paper and \cite{Hochman} 
address only systems with topological embeddings of positive entropy SFTs (often as the consequence of hyperbolicity).
However, the following result of Quas and Soo 
suggests that this strong kind of hyperbolicity is 
not necessary for Borel universality.

Recall that a toral 
automorphism arising from matrix $A$ is quasi-hyperbolic if $A$ has
an irrational eigenvalue on the unit circle \cite{Lind1982}.
It is {\it irreducible} if the characteristic 
polynomial of $A$ is irreducible. Lindenstrauss and Schmidt  \cite{LindenstraussSchmidt2004}
showed that  irreducible quasihyperbolic toral automorphisms cannot contain 
nontrivial homoclinic points, and 
therefore cannot contain (or be a continuous factor of) 
any positive entropy SFT.

Nevertheless, Quas and Soo  \cite{QuasSoomaps} have proven an analogue of the Krieger generator theorem (which is the starting point of Hochman's result) for this class. This generalization raises the following:

\begin{pbm} \label{quasi} Suppose $(X,T)$ is a 
mixing quasihyperbolic toral 
automorphism\footnote{More generally, the question can be asked about the class of maps considered by \cite{QuasSoomaps}: compact t.d.s.\ that satisfy almost weak specification, asymptotic entropy expansiveness, and the small boundary property.}.  Must $(X,T)$ be $h(T)$-universal (as in 
Theorem \ref{thm.Hochman})? 
\end{pbm} 

A different question related to the absence of hyperbolicity is:

\begin{pbm} \label{zerosurface}  Complete the almost-Borel classification of $C^{1+}$ surface 
diffeomorphisms (i.e., extend Theorem \ref{mainthm.conjug} to 
address all nonatomic, ergodic measures). 
\end{pbm} 


In another direction, our proofs require $C^{1+}$-smoothness (for the application of Sarig's \cite{Sarig} symbolic dynamics and ultimately Pesin theory  \cite{Pugh1984,BonattiCrovisierShinohara2014}). Rees' examples \cite{Rees} (see also \cite{BCL2012} and references therein) show that our results do not extend to homeomorphisms.

\begin{pbm}\label{C1} 
Are $C^1$ surface
diffeomorphisms Borel isomorphic to Markov shifts  away from zero entropy measures? 
In positive topological entropy, can they have ergodic period-maximal
measures that are not period-Bernoulli, or have uncountably many 
ergodic period-maximal measures?
\end{pbm} 

Finally, in light of Theorem \ref{mainthm.conjug}, we 
ask the following. 
 
\begin{pbm} \label{markovsurface} 
Which Markov shifts of finite positive 
entropy can be almost-Borel isomorphic to 
a $C^{1+}$ surface diffeomorphism? 
\end{pbm} 

We are not able to rule out the possibility that 
every Markov shift of finite positive entropy is
almost-Borel isomorphic to a surface diffeomorphism.

\appendix

\section{Borel periodic decomposition}\label{sec:BPD}

This Appendix provides a proof of \Thm\ref{thm:specdec}. We freely use the notations of the Theorems and definitions and facts from Sec. \ref{periodssubsec}. We assume $p\geq2$, the case $p=1$ being trivial.

The space 
of finite measurable partitions
 of $X$ into $p+1$ atoms is: 
 $$
  \mathcal P
 = \{ (P_1,\dots ,P_p,P_{p+1}): 
P_i \text{ is Borel; } P_i\cap P_j=\emptyset \text{ if }i\neq j ;\ 
\cup_i P_i=X\}.
$$    

If $C:=(C_1, \dots , C_p)$ is a $p$-cyclic partition for some measure 
$\mu\in\mathcal M$, set $\hat C:=(\hat C_1,\dots,\hat C_p,X\setminus \cup_i \hat C_i)$ where 
 $$C'_i = C_i\setminus \cup_{ j\neq i}C_j \text{ and }
     \hat C_i = C'_i \cap (\cap_{n\in \ZZ} T^n (\cup_{j=1}^p C'_j)),$$ 
so $\hat C\in\mathcal P$. 
Moreover, $\mu(\hat C_i\Delta C_i)=0$ and
$T(\hat C_i)=\hat C_{i+1}$ (again $\hat C_{p+1}=C_1$) for all
$i=1,\dots,p$ and $(\hat C_1,\dots,\hat C_p)$ is still a $p$-cyclic
partition for
$\mu$.

Finally each $\mu\in \PP (X)$ defines a pseudometric $\rho_{\mu}$ 
on $\mathcal P$: 
$
\rho_{\mu}(P,Q)= \frac 12 \sum_{j=1}^{p+1} \mu(P_j \sd Q_j)\   .
$
We will appeal to the following theorem of  Kieffer and Rahe.

\begin{thm}\cite[Thm. 5]{Kieffer-Rahe}\label{krahe}
 Let $\mathcal D$ be a Borel subset of $\pe(T)$ and let 
$\{ \mathcal P_{\mu}: \mu \in \mathcal D\}$
 be a collection of nonempty subsets of $\mathcal P$ such that 
\begin{enumerate} 
\item each $\mathcal P_{\mu}$ is $\rho_{\mu}$-closed, and 
\item
for each $P$ in $\mathcal P$, the map 
$\rho_P: \mathcal D \to [0,1]$ defined by 
$\mu \mapsto \inf \{ \rho_{\mu}(P,Q): Q\in \mathcal P_{\mu} \}$ 
is Borel measurable. 
\end{enumerate} 
Then $\cap_{\mu} \mathcal P_{\mu} \neq \emptyset $ .
\end{thm}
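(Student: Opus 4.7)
The plan is to combine a Borel ergodic-decomposition map with a Borel selection of a partition from each $\mathcal{P}_\mu$ and to glue them together using the mutual singularity of distinct ergodic measures. The three ingredients are: the Borel map $M : X \to \Proberg(T)$ recalled in Sec.~\ref{sec.borelsyst} (defined off an almost null set and satisfying $\mu(\{x : M(x)=\mu\})=1$ for every $\mu\in\Proberg(T)$); a jointly Borel assignment $(\mu,x) \mapsto \mathbf{1}_{(P_\mu)_i}(x)$ with $P_\mu \in \mathcal{P}_\mu$; and the pointwise assembly $P_i := \{x : x \in (P_{M(x)})_i\}$ for $i \leq p$.

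First I would build the selection. Fix a countable Boolean algebra $\mathcal{A}$ of Borel subsets of $X$ that generates the Borel $\sigma$-algebra (available since $X$ is standard Borel), and enumerate $(Q^{(n)})_{n\geq 1}$ as all ordered $(p+1)$-tuple partitions with atoms in $\mathcal{A}$. Standard approximation-in-measure shows that $(Q^{(n)})$ is $\rho_\nu$-dense in $\mathcal{P}$ simultaneously for every probability measure $\nu$. By hypothesis (2) each map $\mu \mapsto r_n(\mu) := \inf\{\rho_\mu(Q^{(n)},Q) : Q \in \mathcal{P}_\mu\}$ is Borel, and $\mu \mapsto \rho_\mu(Q^{(n)},Q^{(n')})$ is also Borel (as a finite sum of Borel maps $\mu \mapsto \mu(E)$ over atoms $E$ of $Q^{(n)} \vee Q^{(n')}$). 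Inductively choose, as Borel functions of $\mu$, indices $n_k(\mu)$ equal to the least $n$ with $r_n(\mu) < 2^{-k}$ and $\rho_\mu(Q^{(n)},Q^{(n_{k-1}(\mu))}) < 2^{-k+1}$. The sequence $(Q^{(n_k(\mu))})_k$ is then $\rho_\mu$-Cauchy, and its $\rho_\mu$-limit class belongs to the $\rho_\mu$-closed set $\mathcal{P}_\mu$. To produce an actual Borel partition (not just an equivalence class), set
\[
  (P_\mu)_i := \liminf_{k\to\infty}\, (Q^{(n_k(\mu))})_i \quad (1\leq i\leq p),\qquad (P_\mu)_{p+1} := X \setminus \bigcup_{i=1}^{p} (P_\mu)_i.
\]
A Borel--Cantelli argument applied to the bounds $\rho_\mu(Q^{(n_k(\mu))},Q^{(n_{k-1}(\mu))}) < 2^{-k+1}$ gives, for $\mu$-a.e.\ $x$, eventual stabilization of the index $i$ with $x \in (Q^{(n_k(\mu))})_i$; hence $P_\mu$ agrees $\mu$-a.e.\ with the $\rho_\mu$-limit, so $\rho_\mu(P_\mu,\mathcal{P}_\mu)=0$ and $P_\mu\in\mathcal{P}_\mu$ by $\rho_\mu$-closure. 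The set $\{(\mu,x) : x \in (Q^{(n_k(\mu))})_i\} = \bigcup_n\{\mu : n_k(\mu)=n\}\times (Q^{(n)})_i$ is Borel, so each $\{(\mu,x) : x \in (P_\mu)_i\}$ is Borel in $\mathcal{D}\times X$.

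The gluing $P_i := \{x \in \mathrm{dom}(M) : x \in (P_{M(x)})_i\}$ for $i\leq p$, extended by $P_{p+1}:=X\setminus \bigcup_{i\leq p}P_i$, is then a Borel partition in $\mathcal{P}$ by Borel measurability of $M$. For every $\mu \in \mathcal{D}$, $\mu$-a.e.\ $x$ satisfies $M(x) = \mu$, whence $P_i \cap \{M=\mu\} = (P_\mu)_i \cap \{M=\mu\}$ up to a $\mu$-null set, giving $\rho_\mu(P,P_\mu) = 0$ and therefore $P \in \mathcal{P}_\mu$. Since this holds for every $\mu \in \mathcal{D}$, we conclude $P \in \bigcap_{\mu}\mathcal{P}_\mu$.

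The main obstacle is the middle step: upgrading hypothesis (2), which only provides Borel measurability of scalar infimum-distance functions, into a jointly Borel selection of an honest Borel partition (rather than a $\rho_\mu$-equivalence class). The universally dense family $(Q^{(n)})$ is what converts the abstract closed set $\mathcal{P}_\mu$ into a Cauchy sequence chosen by Borel rules, and the $\liminf$ recipe then turns $\rho_\mu$-convergence into a bona fide Borel partition with joint $(\mu,x)$ measurability. The mutual singularity contained in the equality $\mu(\{M=\mu\})=1$ is what finally forces the glued partition to satisfy all the constraints $P\in\mathcal{P}_\mu$ simultaneously.
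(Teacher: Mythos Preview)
The paper does not prove this statement: Theorem~\ref{krahe} is quoted from \cite{Kieffer-Rahe} and invoked as a black box in the proof of Theorem~\ref{thm:specdec}. So there is no proof in the paper to compare against; your argument stands on its own.

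Your strategy---Borel selection of $P_\mu\in\mathcal P_\mu$ via a Cauchy scheme over a countable $\rho_\nu$-dense family, realized as an honest partition through a $\liminf$, then glued along the ergodic decomposition map $M$---is sound and is the natural line of attack. Two small repairs are needed.

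First, the inductive constraint $\rho_\mu(Q^{(n)},Q^{(n_{k-1}(\mu))})<2^{-k+1}$ is too tight to guarantee that such an $n$ exists. From $r_{n_{k-1}}(\mu)<2^{-(k-1)}$ you get $Q\in\mathcal P_\mu$ with $\rho_\mu(Q^{(n_{k-1})},Q)<2^{-(k-1)}$, and then density yields $Q^{(n)}$ with $\rho_\mu(Q^{(n)},Q)<\epsilon$; this gives $r_n(\mu)<\epsilon$ but only $\rho_\mu(Q^{(n)},Q^{(n_{k-1})})<\epsilon+2^{-(k-1)}$, which cannot be pushed below $2^{-(k-1)}=2^{-k+1}$. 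Replacing $2^{-k+1}$ by $3\cdot 2^{-k}$ (or any summable bound exceeding $2^{-(k-1)}$) fixes this and leaves the Borel--Cantelli step intact.

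Second, the map $M$ of Sec.~\ref{sec.borelsyst} is asserted there only off an \emph{almost} null set, i.e.\ a set null for all \emph{nonatomic} ergodic measures. Since $\mathcal D\subset\Proberg(T)$ may contain atomic measures (and does, in the application to Theorem~\ref{thm:specdec}), you need $M$ defined $\mu$-a.e.\ for those as well. This is routine---on periodic points set $M(x)$ to be the uniform measure on the orbit of $x$---but it goes slightly beyond what Sec.~\ref{sec.borelsyst} provides.

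With these adjustments your argument is complete.
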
 

\begin{proof}[Proof of \Thm\ref{thm:specdec}] 
Let $\mathcal D=\{
\mu \in \pe(T): 
e^{2i\pi/p}\in \sprat(T,\mu )
\}$.

Given $\mu\in \mathcal D$, let $\mathcal P_{\mu}$ be 
the set of $\hat C\in\mathcal P$ for all $p$-cyclic partitions $C$ for $\mu$.
It remains to show 
 $\cap_{\mu}  \mathcal P_{\mu} \neq \emptyset $ .
Note, 
each $\mathcal P_{\mu}$ is $\rho_{\mu}$-closed, so 
condition (1) of Theorem \ref{krahe} is satisfied. 

Given $\mu \in \mathcal D$, there are distinct $\nu_i$ in 
$\pe(T^p)$, $1\leq i\leq p$, such that 
$\mu = \frac 1p \sum_i \nu_i$ and 
$T\nu_i = \nu_{i+1}$, 
$1\leq i \leq p$ ($\nu_{p+1}$ means $\nu_p$). 
Given $\mu$, let $C_1, \dots , C_p$ be disjoint sets such that 
$\nu_i(C_i)=1$, $1\leq i \leq p$. 
Observe that the ergodicity of $\mu$ implies that elements of
$\mathcal P_\mu$ coincide modulo $\mu$ up to a cyclic permutation of their
first $p$ elements. Thus, modulo $\mu$, $\mathcal P_{\mu}$ contains 
exactly $p$ elements,  the cyclic permutations $(C_{1+d}, \dots , C_{p+d},C_*)$,
$d=0,\dots,p-1$. 

To check that $\mathcal D$ is a Borel subset of the Borel set 
$\pe(T)$, we appeal to some background facts. 
An  injective Borel measurable map into a Borel space has a Borel image, 
and  a Borel measurable inverse
\cite[(15.2)]{Kechris}. 
The fixed point set of a Borel automorphism is Borel. 
For $E$ a separable metric space, the Borel field 
of  $\PP (E)$ (and hence of any Borel subset of 
$\PP (E)$ is the smallest field for which the maps  
$\mu\mapsto \mu(A)$, $A$  ranging over the Borel sets of $E$, 
are measurable 
\cite[Theorem 17.24]{Kechris}.  
Consequently, 
the sets $F_i, G_1,G_2,G_3$ below are Borel:  
$$\begin{aligned} 
F_i \ =\  \{\mu \in \PP (T^i): T^i\mu = \mu \} \quad \ \ \ & \quad 
G_1 \ =\  \pe (T^p)\setminus \cup_{i=1}^{p-1}F_i \\
G_2\ =\  \{ \frac 1p \mu : \mu \in G_1\}  
\qquad \qquad \quad \ 
 & \quad 
G_3 \ =\  \Bigg\{ \sum_{i=1}^pT^i \mu: \mu \in G_2\Bigg\}  & 
\ . 
\end{aligned} $$
We claim that $\mathcal D=G_3$. 
If $\nu\in \mathcal D$ and $\gamma $ 
is the assumed factor map onto $\{e^{2\pi i/k}: k = 0, 1, \dots, 
p-1\}$, let  $\mu $  be $p$ times the restriction of $\nu $ 
to $\gamma^{-1}(1)$. Then $\mu \in G_1$ (because 
$\mu$ is ergodic for $T$) and $\nu = \sum_{i=1}^{p}\mu$. 
Therefore  
$\mathcal D$ is contained in  $G_3$. For the other direction, 
suppose $\mu \in G_1$.  Given
 $1\leq i \leq p-1$, 
write the measure $T^i\mu$ as $\nu_c + \nu_s$, 
where $\nu_c = f\mu$ ($f$ the Radon-Nikodym derivative) 
and $\nu_s$ is singular with respect to $\mu$. 
The function $f$ is $T^p$-invariant, because the 
measures $\mu$ and $T^i\mu$ are $T^p$-invariant, 
so by ergodicity of $\mu$ for $T^p$, $f$ is constant 
$\mu $ a.e.
Because $T^i\mu \neq \mu$,  there is then a 
set $C_i$ of $\mu$-measure 1 and $T^i\mu$-measure zero. 
Let $C=\cap_{i=1}^{p-1}C_i$ and $D_i= T^iC$, $0\leq i \leq p-1$. 
It follows that $\mu (D_i\cap D_j)=0$ for $0\leq i < j \leq p-1$. 
Now $\sum_{i=0}^{p-1}\frac 1p T^i\mu$ is a $T$ invariant probability 
eigenfunction defined a.e. by  
$x\to e^{2\pi i/p}$ if $x\in D_i$.  Therefore 
$G_3$ is contained in 
$\mathcal D$. 

It remains to verify condition (2) of Theorem \ref{krahe}. 
We will construct a Borel selection $\beta$  for the Borel map 
$\phi:G_2\to \mathcal D$ defined by $\nu \mapsto \sum_{i=1}^p 
T^i\nu$ (i.e., $\beta: \mathcal D \to G_2$ is Borel 
and $\phi \circ \beta$ is the identity on $\mathcal D$). 

Define a Borel measurable order $\prec$ on $G_2$ 
(for example, via a Borel injective map $G_2 \to \mathbb R$). 
 Let $B=\{ m\in G_2: m\prec T^jm , 1\leq j < p\}$, a Borel set in
 $G_2$.  Then the restriction  $B\xrightarrow[ ]{ \phi }\mathcal D$ is
 a Borel bijection and  
$\beta 
=  (\phi |{B} )^{-1}$ is our selection.

Now suppose $P=(P_1, \dots ,P_{p+1}) \in \mathcal P$.
For $\mu \in \mathcal D$, 
set $\mu'=\beta (\mu)$.
Given 
$Q=(Q_1, \dots , Q_{p+1})$ in $\mathcal P_{\mu}$, 
there is some $d\in \{ 1, \dots ,p\}$ 
such that 
for $1\leq j \leq p$
we have 
\begin{align*}
(T^{j+d}\mu')(Q_j)\ &=\ \mu (Q_j) \ ,\\
(T^{j+d}\mu')(X\setminus Q_j)\ &=\ 0\ , 
\end{align*}
and $\mu (Q_{p+1})=0$. Therefore 
\begin{align*} 
\rho_\mu(P,Q)\ =\ 
\frac 12 \sum_{j=1}^{p+1} \mu (P_j \sd Q_j) 
\ &= \ 
\frac 12 \sum_{j=1}^{p+1} \mu (P_j) + \mu (Q_j) -\mu (P_j \cap Q_j) \\ 
= 1 - \frac 12 \sum_{j=1}^p \mu (P_j \cap Q_j) 
\ &= \ 1 - \frac 12 \sum_{j=1}^p (T^{j+d}\mu ')(P_j) 
\ := \ \phi_d (\mu ) \ .
\end{align*}
We conclude that 
$\inf\{\rho_\mu(P,Q): Q\in \mathcal P_{\mu}\} = 
\min \{\phi_d(\mu ): 1\leq d\leq p\}$, 
which is a Borel function of $\mu$. 
\end{proof}

\section{Miscellany} 

We include in this section some basic results 
for lack of a direct reference. 

\begin{proposition} \label{finitetoonelift} 
Let $\pi:(X,S)\to(Y,T)$ be a Borel factor map. Let $\nu\in\Prob(T)$ satisfy: for $\nu$-a.e. $y\in Y$, $0<\#\pi^{-1}(y)<\infty$. Then there exists $\mu\in\Prob(S)$ such that $\pi_*\mu=\nu$.
\end{proposition}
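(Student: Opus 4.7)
The plan is to build $\mu$ as the integral of the uniform probability measure on each fiber of $\pi$, and to check that this produces an $S$-invariant measure with the right pushforward. The only subtle point is Borel measurability of the fiberwise construction; this is handled by Lusin--Novikov.

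First I would reduce to the case that $\pi^{-1}(y)$ is finite and nonempty for every $y$. The set $Y_0:=\{y\in Y:0<\#\pi^{-1}(y)<\infty\}$ is Borel, and it is $T$-invariant since $\pi\circ S=T\circ\pi$ and $S$ is a Borel automorphism, so $S(\pi^{-1}(y))=\pi^{-1}(Ty)$ and the two sides have the same cardinality. Replacing $Y$ by $Y_0$ and $X$ by $\pi^{-1}(Y_0)$ preserves all hypotheses and the measure $\nu$, since $\nu(Y_0)=1$ by assumption.

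Next, I would apply the Lusin--Novikov uniformization theorem \cite[(18.10), (18.14)]{Kechris} to the countable-to-one Borel map $\pi:X\to Y$: the graph of $\pi^{-1}$ decomposes into countably many Borel graphs of Borel partial sections $s_n:Y\rightharpoonup X$, $n\in\NN$, whose domains $D_n$ are Borel subsets of $Y$. Then the counting function $N(y):=\#\pi^{-1}(y)=\sum_n\mathbf{1}_{D_n}(y)$ is a Borel map $Y\to\NN$, and I would define the Markov kernel
\[
\mu_y\ :=\ \frac{1}{N(y)}\sum_{n:\,y\in D_n}\delta_{s_n(y)}\ \in\ \Prob(X),
\]
which is simply the uniform probability on the finite fiber $\pi^{-1}(y)$; its dependence on $y$ is Borel. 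Finally I would set
\[
\mu(A)\ :=\ \int_Y \mu_y(A)\,d\nu(y)\qquad(A\subset X\text{ Borel}).
\]
By construction $\pi_*\mu=\nu$, since $\mu_y$ is supported on $\pi^{-1}(y)$.

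The key (and easy) verification is $S$-invariance. Because $S$ restricts to a bijection $\pi^{-1}(y)\to\pi^{-1}(Ty)$ preserving cardinality, a direct computation gives $S_*\mu_y=\mu_{Ty}$; integrating and using $T_*\nu=\nu$ then yields
\[
S_*\mu\ =\ \int_Y S_*\mu_y\,d\nu(y)\ =\ \int_Y \mu_{Ty}\,d\nu(y)\ =\ \int_Y \mu_{y'}\,d\nu(y')\ =\ \mu.
\]
The main (mild) obstacle is ensuring Borel measurability of the fiberwise uniform measure, which is precisely what the Lusin--Novikov decomposition delivers; once that is in hand, the rest is a one-line computation.
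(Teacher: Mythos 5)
Your construction is essentially the one the paper gives: reduce to a set where fibers are finite and nonempty, apply Lusin--Novikov to get a Borel enumeration of the fibers, put the uniform probability on each fiber, integrate against $\nu$, and check $S$-invariance using the fact that $S$ restricts to a cardinality-preserving bijection $\pi^{-1}(y)\to\pi^{-1}(Ty)$. The paper phrases this via a Borel isomorphism $\psi:X\to\hat Y=\{(y,k):1\leq k\leq N(y)\}$ and writes $\mu$ as a sum over the level sets of $N$, whereas you phrase it as a Markov kernel $y\mapsto\mu_y$; these are the same construction in different notation, and your invariance computation is correct.

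The one genuine slip is the assertion that $Y_0:=\{y\in Y: 0<\#\pi^{-1}(y)<\infty\}$ is Borel. For a general Borel map this is not automatic: $\{y:\pi^{-1}(y)\neq\emptyset\}=\pi(X)$ is analytic, and $\{y:\#\pi^{-1}(y)<\infty\}$ is coanalytic, and their intersection need not be Borel. Moreover, Lusin--Novikov itself requires a Borel relation with \emph{all} sections countable, so it cannot be invoked before restricting to a Borel set on which the finiteness holds. The repair is exactly what the paper does: the hypothesis gives a Borel set $Y'\subset Y_0$ with $\nu(Y')=1$, and then $\bigcap_{n\in\ZZ}T^{-n}Y'$ is a Borel, $T$-invariant, $\nu$-conull subset of $Y_0$; replacing $Y$ by this set and $X$ by its preimage, the rest of your argument goes through verbatim. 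Your observation that $S(\pi^{-1}(y))=\pi^{-1}(Ty)$, so that $Y_0$ is $T$-invariant \emph{as a set}, is correct and is exactly what makes the intersection over $n$ harmless, but it does not by itself make $Y_0$ Borel.
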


\begin{proof}
Observe that we can replace $Y$ by $\bigcap_{n\in\ZZ}T^{-n}Y'$ where
$Y'$ is a Borel set of full $\nu$-measure implied by the assumption. 

We claim that there are a Borel map $N:Y\to\NN$, $N(y):=\#\pi^{-1}(y)$, and a Borel isomorphism  $\psi:X\to\hat Y:=\{(y,k)\in Y\times\NN:1\leq k\leq N(y)\}$ such that $\pi\circ\psi(y,k)=y$ on $\hat Y$. This follows from the uniformization theorem for Borel maps with countable fibers \cite[(18.10) and (18.14)]{Kechris}.

Now, $\psi\circ S\circ\psi^{-1}(y,k)=(T(y),\sigma_y(k))$
where $$\sigma_y:\{1,\dots,N(y)\}\to\{1,\dots,N(Ty)\}.$$ $S$ and $T$
being automorphisms, $N\circ T=N$ and $\sigma_{y}$ is a
permutation of $\{1,2,\dots,N(y)\}$. Hence, $S$ must preserve
 $$\mu:=\sum_{n\geq1} 
(\psi^{-1})_*\left((\nu|N^{-1}(n))\times\tfrac1n(\delta_1+\dots+\delta_n)\right).$$
\end{proof}

\begin{proposition} \label{compactrelation}
Suppose $f: X\to Y$ is a continuous surjection, 
$Y$ has the quotient topology, 
$X$ is compact metric and 
$E:=\{ (x,w): f(x)=f(w)\}$ is closed in $X\times X$.  
Then $Y$   is compact metrizable. 
\end{proposition}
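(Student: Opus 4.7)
The plan is to verify, in order: $Y$ is compact Hausdorff, $f$ is a closed map, $Y$ is second countable, hence metrizable by Urysohn. The only nonroutine step is the Hausdorff property; everything else will follow from standard compactness arguments.

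For compactness, $Y=f(X)$ is a continuous image of a compact space. For Hausdorffness, the key technical tool is the following observation: for every open $U\subset X$, the set $V(U):=\{x\in X:f^{-1}(f(x))\subset U\}$ is open. Indeed, let $\pi_1,\pi_2:E\to X$ be the coordinate projections on the compact set $E$. Then $\pi_2^{-1}(X\setminus U)$ is closed in $E$, hence compact, so its image $S:=\pi_1(\pi_2^{-1}(X\setminus U))$ (the saturation of $X\setminus U$) is compact in $X$. A direct check shows $V(U)=X\setminus S$, which is therefore open. Also $V(U)$ is clearly saturated: if $f(x)=f(x')$ and $f^{-1}(f(x))\subset U$, then $f^{-1}(f(x'))=f^{-1}(f(x))\subset U$.

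To conclude Hausdorffness, pick $y_1\neq y_2$ in $Y$. The fibers $A_i:=f^{-1}(y_i)$ are compact and disjoint, so in the compact metric space $X$ they admit disjoint open neighborhoods $U_1,U_2$. Then $V(U_i)$ is a saturated open subset of $X$ containing $A_i$, and $V(U_1)\cap V(U_2)=\emptyset$ because any $x$ in the intersection would force $f^{-1}(f(x))\subset U_1\cap U_2=\emptyset$, contradicting $x\in f^{-1}(f(x))$. Hence $W_i:=f(V(U_i))$ are disjoint open neighborhoods of $y_1,y_2$ in $Y$ (open because $f^{-1}(W_i)=V(U_i)$).

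Now that $X$ is compact and $Y$ is Hausdorff, $f$ sends closed (hence compact) sets to compact (hence closed) sets, so $f$ is a closed map. To see $Y$ is second countable, fix a countable base $\mathcal{B}$ for $X$ closed under finite unions. For each $U\in\mathcal{B}$, set $W_U:=Y\setminus f(X\setminus U)$; this is open since $f$ is closed, and $W_U=\{y\in Y:f^{-1}(y)\subset U\}$. Given $y\in Y$ and an open neighborhood $N\ni y$, the compact set $f^{-1}(y)$ is covered by the open set $f^{-1}(N)$, so by compactness there exists $U\in\mathcal{B}$ with $f^{-1}(y)\subset U\subset f^{-1}(N)$; then $y\in W_U\subset N$. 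Thus $\{W_U:U\in\mathcal{B}\}$ is a countable base for $Y$. A compact Hausdorff second countable space is metrizable by Urysohn's metrization theorem, completing the proof.

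The main obstacle is the Hausdorff step, which I expect to handle via the saturated-open-set trick above; the rest is a standard compactness bookkeeping.
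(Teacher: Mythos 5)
Your proof is correct, but it takes a genuinely different route from the paper's. The paper verifies that $f$ is a closed map exactly as you do implicitly (by observing that the saturation $f^{-1}(f(K)) = \pi_2(\pi_1^{-1}K \cap E)$ of a closed set $K$ is a compact, hence closed, subset of $X$), and then simply invokes a general theorem of Dugundji (Theorem 5.2 of \cite{Dugundji}): a closed continuous surjection with compact fibers from a metrizable space has a metrizable image. Your proof instead rebuilds that theorem from scratch in the compact case: the saturated open sets $V(U)$ give Hausdorffness of $Y$; closedness of $f$ then comes for free from compactness; the sets $W_U$ over a countable base of $X$ closed under finite unions give second countability of $Y$; and Urysohn metrization finishes. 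What the paper's approach buys is brevity, at the cost of citing a result about perfect maps; what yours buys is a self-contained, elementary argument that exhibits the quotient base explicitly. Both are sound. One tiny stylistic note: when you write that $Y$ is compact as a continuous image of a compact space, this is quasi-compactness, which holds regardless of Hausdorffness, so your ordering (compactness first, Hausdorff later, then closedness of $f$) is logically clean.
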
 
\begin{proof}  
Let $p_1, p_2$ be the projections from 
$X\times X $ to $X$. 
If   $K$ is a closed subset of 
the compact Hausdorff space $X$, then  
$f^{-1}(f(K))=\pi_2(\pi_1^{-1}K))$ is 
closed in $X$. Now $f$ is a closed map 
with compact fibers and $X$ is metrizable,
so $Y$ is metrizable 
\cite[Theorem 5.2]{Dugundji}.
\end{proof} 


\bibliographystyle{plain} 
\bibliography{b}

%
%
%
%
%
%
%
%
%
%

\end{document}